\mathchardef\ordinarycolon\mathcode`\:
\def\R{\mathbb{R}}
\def\N{\mathbb{N}}
\newcommand{\N}{\mathbb{N}}
\renewcommand{\R}{\mathbb{R}}
\newcommand{\Ec}{\mathcal{E}}
\newcommand{\Ic}{\mathcal{I}}
\newcommand{\Rc}{\mathcal{R}}
\newcommand{\Sc}{\mathcal{S}}
\newcommand{\bdot}{\boldsymbol{\cdot}}                
\newcommand{\id}{\mathrm{id}} 						  %
\newcommand{\norm}[1]{\left\lVert#1\right\rVert}      
\newcommand{\abs}[1]{\left|#1\right|}                 
\newcommand{\paren}[1]{\left(#1\right)}               
\newcommand{\sparen}[1]{\left\{#1\right\}}		      
\newcommand{\h}[1]{\left\langle#1\right\rangle}       
\newcommand{\ran}[1]{\mathrm{ran}\left(#1\right)}     
\newcommand{\dom}[1]{\mathrm{dom}\left(#1\right)}     
\newcommand{\ceil}[1]{\left.\lceil #1\rceil\right.}
\renewcommand{\d}{\,\mathrm{d}}						  
\DeclareMathOperator*{\supp}{\mathrm{supp}}           
\DeclareMathOperator*{\sgn}{\mathrm{sgn}}
\DeclareMathOperator*{\argmin}{\mathrm{arg\:min}}
\renewcommand{\Im}{\mathrm{Im}}
\newcommand{\tvector}[2]{\begin{pmatrix}#1\\ #2\end{pmatrix}}
\renewcommand{\epsilon}{\varepsilon}
\renewcommand{\rho}{\varrho}
\renewcommand{\phi}{\varphi}
\renewcommand{\bar}[1]{\overline{#1}}
\newtheoremstyle{mythm}%
{\item[\hskip\labelsep \theorem@headerfont ##1\ ##2\theorem@separator]}%
{\item[\hskip\labelsep \theorem@headerfont ##1\ ##2\ \normalfont{(##3)}\theorem@separator]}
\theoremstyle{mythm}
\newtheorem{algorithm}[theorem]{Algorithm}
\theoremstyle{nonumberbreak}
\theoremstyle{nonumberbreak}
\newtheoremstyle{myproof}%
{\item[\hskip\labelsep \theorem@headerfont ##1\theorem@separator]}%
{\item[\hskip\labelsep \theorem@headerfont ##1\ ##3\theorem@separator]}
\theoremstyle{myproof}
\begin{document}

\begin{frontmatter}
	\title{Sparse regularization in limited angle tomography}
	\author[a,b]{J\"urgen Frikel}
	\address[a]{ Institute of Biomathematics and Biometry, Helmholtz Zentrum M\"unchen, German Research Center for Environmental Health, Ingolst\"adter Landstra\ss e 1, D-85764, Germany}
	\address[b]{Zentrum Mathematik, M6, Technische Universit\"at M\"unchen, Germany}
	\date{\today}
	\begin{abstract}
		We investigate the reconstruction problem of limited angle tomography. Such problems arise naturally in applications like digital breast tomosynthesis, dental tomography, electron microscopy etc. Since the acquired tomographic data is highly incomplete, the reconstruction problem is severely ill-posed and the traditional reconstruction methods, such as filtered backprojection (FBP), do not perform well in such situations. 
	
		To stabilize the reconstruction procedure additional prior knowledge about the unknown object has to be integrated into the reconstruction process. In this work, we propose the use of the sparse regularization technique in combination with curvelets. We argue that this technique gives rise to an edge-preserving reconstruction. Moreover, we show that the dimension of the problem can be significantly reduced in the curvelet domain. To this end, we give a characterization of the kernel of limited angle Radon transform in terms of curvelets and derive a characterization of solutions obtained through curvelet sparse regularization.  In numerical experiments, we will present the practical relevance of these results.
	\end{abstract}

	\begin{keyword}
		Radon transform, limited angle tomography, curvelets, sparse regularization, dimensionality reduction.
	\end{keyword}
\end{frontmatter}

\section{Limited angle tomography: Introduction, Organization and Notations}
\label{sec:intro}

\subsection{Introduction}
\label{subsec:introduction}
Limited angle tomography problems arise naturally in many practical applications, such as di\-gital breast tomosynthesis, dental tomography, etc. The underlying principle of these imaging techniques consists in two steps: First, the data acquisition step, where a few x-ray projections of an object are taken from different view angles (within a limited angular range). Second, the reconstruction step, where the attenuation coefficient $f:\R^2\to\R$ of the object is approximately reconstructed from the given projection data. In this paper we are concerned with the second step, i.e., with the development of an appropriate (adapted) reconstruction technique which takes into account the special structure of the limited angle tomography.

To this end, we consider the \emph{Radon transform} as  mathematical model for the acquisition process which is defined by
\begin{equation}
\label{eq:radon transform}
	\Rc f(\theta,s)=\int_{L(\theta,s)} f(x)\d S(x),
\end{equation}
where $L(\theta,s)=\sparen{x\in\R^2:\;x_1\cos\theta+x_2\sin\theta=s}$ denotes the line with normal direction $(\cos\theta,\sin\theta)^T$ and a signed distance from the origin $s\in\R$. Furthermore, we assume $f$ to lie in the natural domain of the Radon transform, i.e., $f$ is such that \eqref{eq:radon transform} exists for all $(\theta,s)$. Whenever we write $\Rc_\theta f(s)$ instead of $\Rc f(\theta,s)$, we consider $\Rc f(\theta,s)$ as an univariate function of the second argument $s$ with a fixed angular parameter $\theta$. In this case, we will call the function $\Rc_\theta f$ a projection of $f$ at angle $\theta$.

In contrast to the classical computed tomography, in limited angle tomography the data $\Rc f(\theta,s)$ is known only  within a limited angular range, that is, for $\theta\in[-\Phi,\Phi]$ with $\Phi<\pi/2$. To emphasize that the Radon transform $\Rc f$ of a function is defined only on a limited angle domain $[-\Phi,\Phi]\times \R$, we will write $\Rc_\Phi f$ and call it the \emph{limited angle Radon transform}. As a consequence of the limited angular range, the reconstruction problem $y = \Rc_\Phi f$ becomes severely ill-posed \cite{Natterer86, Davison83}. Thus, small measurement errors can cause huge reconstruction errors. 

This is a serious drawback for practical applications since the acquired data is (to some extent) always corrupted by noise. The practical reconstruction problem is therefore given by the equation
\begin{equation}\label{eq:reconstruction problem}
	y^\delta = \Rc_\Phi f + \eta,
\end{equation}
where $\eta$ denotes the noise, $\delta>0$ is the noise level, i.e., $\norm{\eta}<\delta$. The aim is to find an approximation to $f$ from the noisy measurements $y^\delta$. 

It is well-known that classical reconstruction methods, such as filtered backprojection (FBP), do not perform well in such situations meaning that they are sensitive to noise \cite{Natterer86}. To stabilize the inversion additional prior knowledge about the solution has to be integrated into the reconstruction procedure \cite{Regularization_Engl96}. Usually, variational methods are used to obtain a regularized solution $f_\alpha$  of the reconstruction problem which is given as a minimizer of the so-called Tikhonov type variational functional
\begin{equation}\label{eq:tikhonov}
	T_\alpha (f) = \norm{\Rc_\Phi f-y^\delta}_2^2 + \alpha\Lambda(f),
\end{equation}
where $\alpha>0$ denotes a regularization parameter and $\Lambda:\dom\Lambda\to[0,\infty]$ is a convex and proper functional \cite{VariationalMethodsScherzer2009}. The first term  in \eqref{eq:tikhonov} - the data fidelity term - controls the data error, whereas the second term - the so-called penalty or prior term - encodes the prior information about the object. 

The choice among the various prior terms and, thus, regularization techniques depends on the specific object (which is imaged) and, to some extent, on the desire to preserve or emphasize particular features of the unknown object. Usual choices for $\Lambda$ are any kind of a smoothness (semi-) norms \cite{VariationalMethodsScherzer2009}. For instance, the Besov norm allows to adjust the smoothness of the solution at a very fine scale \cite{Kolehmainen03,Rantala2006,LorenzTrede2008}. Another prominent example in image reconstruction is the total variation (TV) norm which is used in particular for edge-preserving reconstruction, \cite{Fadili:cs,Hansen:2011wf}. 

Indeed, to preserve edges is an important issue for medical imaging. However, it was pointed out in \cite{HermanDavidi2008}, that TV reconstruction may be not an appropriate choice for medical imaging purposes. One reason for this is that TV regularization favors piecewise constant functions and, hence, produces staircase effects (cf. \cite{Ring:2000vq,Caselles:2007hk}) which may destroy relevant information. Hence, piecewise constant functions may be not appropriate for our purpose. To overcome this problem, higher order total variation priors were considered by some authors, see for example \cite{Bredies:2010gt}. In this work will use curvelets to avoid such problems while preserving edges of the reconstruction.

Another issue we are concerned with is the fact that in limited angle tomography one can not expect to get a perfect reconstruction (though the limited angle problem is uniquely solvable in some mathematical settings). Depending on the available angular range some structures of the unknown object can be reconstructed (are visible) and some can not be reconstructed (are invisible) \cite{Quinto93}. To our knowledge the information about the visible and invisible structures (which is encoded in the data set) is not exploited by any of the mentioned reconstruction methods.

In view of the above discussion, our goal in this work is to design a reconstruction method for limited angle tomography which is
\begin{enumerate}[label=(\roman{enumi})]
	\item stable, i.e., insensitive to noise,\label{enum:goal1}
	\item independent of acquisition geometry,\label{enum:goal2}
	\item edge-preserving,\label{enum:goal3}
	\item adapted to the limited angle setting, i.e., exploits information about visible and invisible structures.\label{enum:goal4}
\end{enumerate}
First thoughts on this topic have been formulated in an extended abstract (2 pages) that is submitted to the Proceedings in Applied Mathematics and Mechanics, \cite{FrikelPAMM2011}.

\subsection{Organization of this paper} 
In the first part of Section \ref{sec:stabilization} a brief description of the curvelet dictionary will be given. The second part of Section \ref{sec:stabilization} the technique of sparse regularization will be introduced as a stable reconstruction method. In Section \ref{sec:BCD reconstruction} we will discuss the relation between the curvelet sparse regularization technique and the curvelet thresholding which was proposed in \cite{Candes_Recovering_Edges_in_Illposed_problems02}. Based on this discussion, a characterization of the curvelet sparse regularization will be given for the full angular problem by using the biorthogonal curvelet decomposition (BCD) \cite{Candes_Recovering_Edges_in_Illposed_problems02}. Afterwards, the limitations of the BCD approach will be discussed. 

Our main results will be presented in Section \ref{sec:characterizations}. Here, we will first prove a characterization of the kernel of the limited angle Radon transform in terms of curvelets. As a consequence, a characterization of curvelet sparse regularizations will be derived. These results will be applied to a finite dimensional reconstruction problem in Section \ref{sec:adapted csr}. By performing dimensionality reduction of the reconstruction problem in the curvelet domain, an adapted curvelet sparse regularization approach will be introduced. In Section \ref{sec:discussion} we will discuss some of our results.

 We will conclude this paper by showing some numerical experiments in Section \ref{sec:results}. In particular, we will show that the execution times of the adapted curvelet sparse regularization significantly reduces while preserving the reconstruction quality.

\subsection{Notation}
\label{subsec:notation}
We state here some notations which will be used throughout this paper:

The inner product of $x,y\in\R^n$ will be denoted as $x\cdot y$ or simply $xy$. When not otherwise stated, inner product in a function space $X$ will be denoted by $\h{f,g}_{X}$. The norm of a vector $x\in\R^n$ will be denoted by $\abs{x}$ whereas the norm in a function space $X$ will be denoted by $\norm{f}_{X}$.

We will be using some classical function spaces, such as the space of Schwartz functions $\Sc(\R^n)$ and the spaces of measurable functions $L^p(\Omega)$, without reference since they can be found in every book on functional analysis. Same holds for the classical sequence spaces $\ell^p$. 

The Fourier transform $\hat f$ of a function $f\in\Sc(\R^n)$ is defined by
\begin{equation*}
	\hat f(\xi) = \paren{2\pi}^{-n/2}\int_{\R^n}f(x)e^{-ix\xi}\d x.
\end{equation*}
The inverse Fourier transform is given by $\check f(x)=\hat f(-x)$. Basic properties of the Fourier transform will be used without proof. For details about the Fourier transform we refer to \cite{SteinWeiss1971}.

For $\eta\in[0,2\pi]$, we define $\rho_\eta$ to be the rotation operator $\rho_\eta f(x)=f(R_\eta x)$, where the rotation matrix $R_\eta$ is defined by
\begin{equation*}
	R_\eta =
		\begin{pmatrix} 
			 \cos\eta &\sin\eta\\ 
			-\sin\eta &\cos\eta
		\end{pmatrix}.
\end{equation*}

Eventually, we refer to \cite{Natterer86} for notations and some basic facts about the Radon transform.

\section{Stabilization of limited angle reconstructions by sparsity in the curvelet domain}
\label{sec:stabilization}
In this section we are going to address our goals \ref{enum:goal1}-\ref{enum:goal3} stated at the end of Subsection \ref{subsec:introduction}.  To stabilize the inversion we need to incorporate some a priori information into the reconstruction which permits an edge-preserving reconstruction, or at least, does not smoothes edges in the reconstruction. We will do so by assuming that the functions we are going to reconstruct belong to a class $\Ec^2$ which consists of functions that are $C^2$ except from discontinuities along $C^2$ curves. To translate this qualitative information into a mathematical language we use the fact that functions in $\Ec^2$ are optimally sparse with respect to the curvelet frame \cite{Candes_New_tight_Frames_of_Curvelets04}. Hence, the technique of sparse regularization  \cite{Daubechies_Iterative_Thresholding_Algorithm_for_Inverse_Problems04} seems to be appropriate in our setting. 

To this end, we briefly recall the definition the curvelet frame \cite{Candes_CCTII} and collect some basic facts about technique of sparse regularization. 

\subsection{The curvelet dictionary}
\label{ssec:curvelet frame}
At scale $2^{-j}$, $j\in\mathbb{N}_0$, we first define the generating curvelets $\psi_{j,0,0}$, in the frequency domain using polar coordinates $(r,\omega)$ by
\begin{equation}\label{eq:generating curvelets}
	\widehat\psi_{j,0,0}(r,\omega)=2^{-3j/4}\cdot W(2^{-j}\cdot r)\cdot V\paren{\frac{2^{\ceil{j/2}+1}}{\pi}\cdot\omega},
\end{equation}
where $W(r)$ is a radial window and $V(\omega)$ denotes an angular window. The windows $W$ and $V$ are both real and smooth, i.e., $W,V\in C^\infty$. Furthermore, we assume that $\supp W \subset (1/2,2)$, $\supp V\subset (-1,1)$ and that the following admissibility conditions are satisfied,
\begin{align*}
	\sum_{j=-\infty}^\infty W^2(2^j r) &= 1,\quad r\in(3/4,3/2);\\
	\sum_{l=-\infty}^\infty V^2(\omega-l) &= 1,\quad \omega\in(-1/2,1/2).
\end{align*}

\begin{figure}[h]
\centering
	\includegraphics[width=\textwidth]{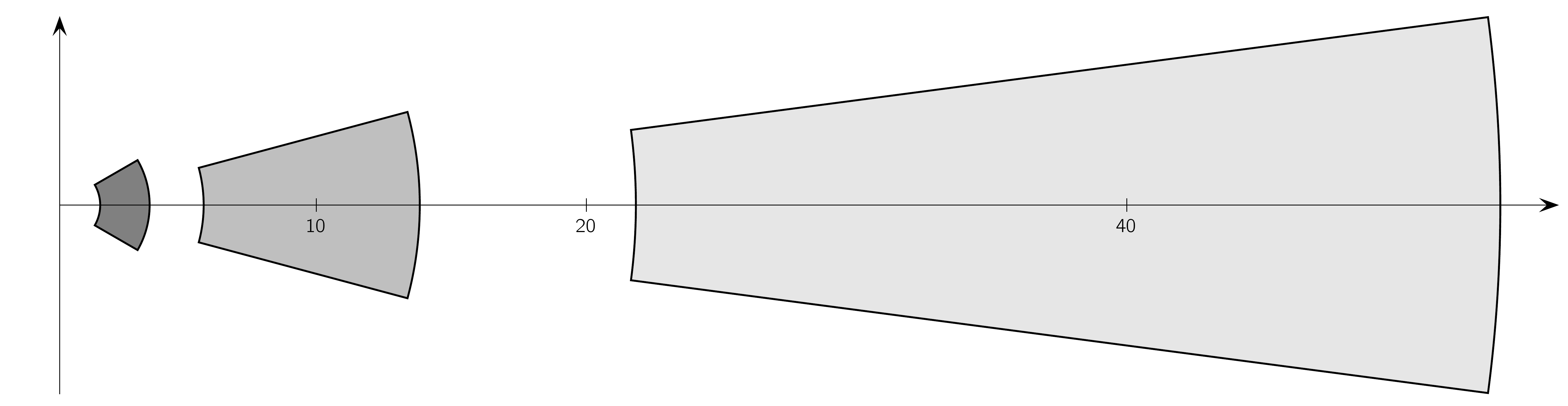}
	\caption{Support of curvelets in the Fourier domain for $j=1$ (dark gray), $j=3$ (gray) and $j=5$ (light gray).}
	\label{fig:curveletsupport}
\end{figure}

The family of curvelets $\sparen{\psi_{j,l,k}}_{j,l,k}$ is now constructed by translation and rotation of generating curvelets $\psi_{j,0,0}$. That is, at scale $2^{-j}$, the curvelet $\psi_{j,l,k}$ is defined via
\begin{equation}\label{eq:curvelets}
	\psi_{j,l,k}(x) = \psi_{j,0,0}(R_{\theta_{j,l}}(x-b^{j,l}_k)),
\end{equation}
where $R_{\theta_{j,l}}$ denotes the rotation matrix (cf. Section \ref{subsec:notation}) with respect to the scale-dependent rotation angles $\theta_{j,l}$ and scale-dependent locations $b^{j,l}_k$ which are define by
\begin{align*}
	\theta_{j,l} &= l\cdot \pi\cdot 2^{-\ceil{j/2}-1}, \quad -2^{\ceil{j/2}+1}\leq l<2^{\ceil{j/2}+1},\\
	b^{j,l}_k &=R_{\theta_{j,l}}^{-1}\paren{\frac{k_1}{2^j},\frac{k_2}{2^{j/2}}}, \quad k=(k_1,k_2)\in\mathbb{Z}^2.
\end{align*}

Since the window functions $W$ and $V$ are compactly supported, and in particular, since the support of $W(2^j\bdot)$ is contained in $(1/2,\infty)$, it follows from \eqref{eq:generating curvelets} and \eqref{eq:curvelets} that, in the Fourier domain, each curvelet is supported on a polar wedge which has a positive distance to the origin, see Figure \ref{fig:curveletsupport}. We have $\hat\psi_{j,l,k}(\xi)=0$ for all $\abs{\xi}<1/2$ and for all admissible indices $(j,l,k)$, i.e., the region $\bigcup_{(j,l,k)}\supp\hat\psi_{j,l,k}$ covers not all of the $\mathbb{R}^2$. Thus, the system $\sparen{\psi_{j,l,k}}$ does not contain any low-pass element. 

To complete the definition of the curvelet system we define the generating low-pass function $\psi_{-1,0,0}$ in the Fourier domain by
\begin{equation*}
	\widehat\psi_{-1,0,0}(r,\omega)= W_0(r),\quad W_0^2(r):=1-\sum_{j=0}^\infty W^2(2^{-j}r)
\end{equation*}
and complete the curvelet system by all of its translates $\sparen{\psi_{-1,0,k}}_{k\in\mathbb{Z}^2}$. 

\begin{remark*}
	In the spatial domain, the essential support of curvelets is an ellipse which is located near $b^{j,l}_k$ and oriented along the orthogonal direction $\theta_{j,l}^\bot=\theta_{j,l}+\pi/2$. The directional localization becomes higher when the scale parameter $j$ increases. Thus, curvelets are highly oriented at fine scales, see Figure \ref{fig:curvelets}.
\end{remark*}

\begin{figure}[H]
\centering
	\subfloat[$\psi_{500}$]{\includegraphics[height=4.5cm]{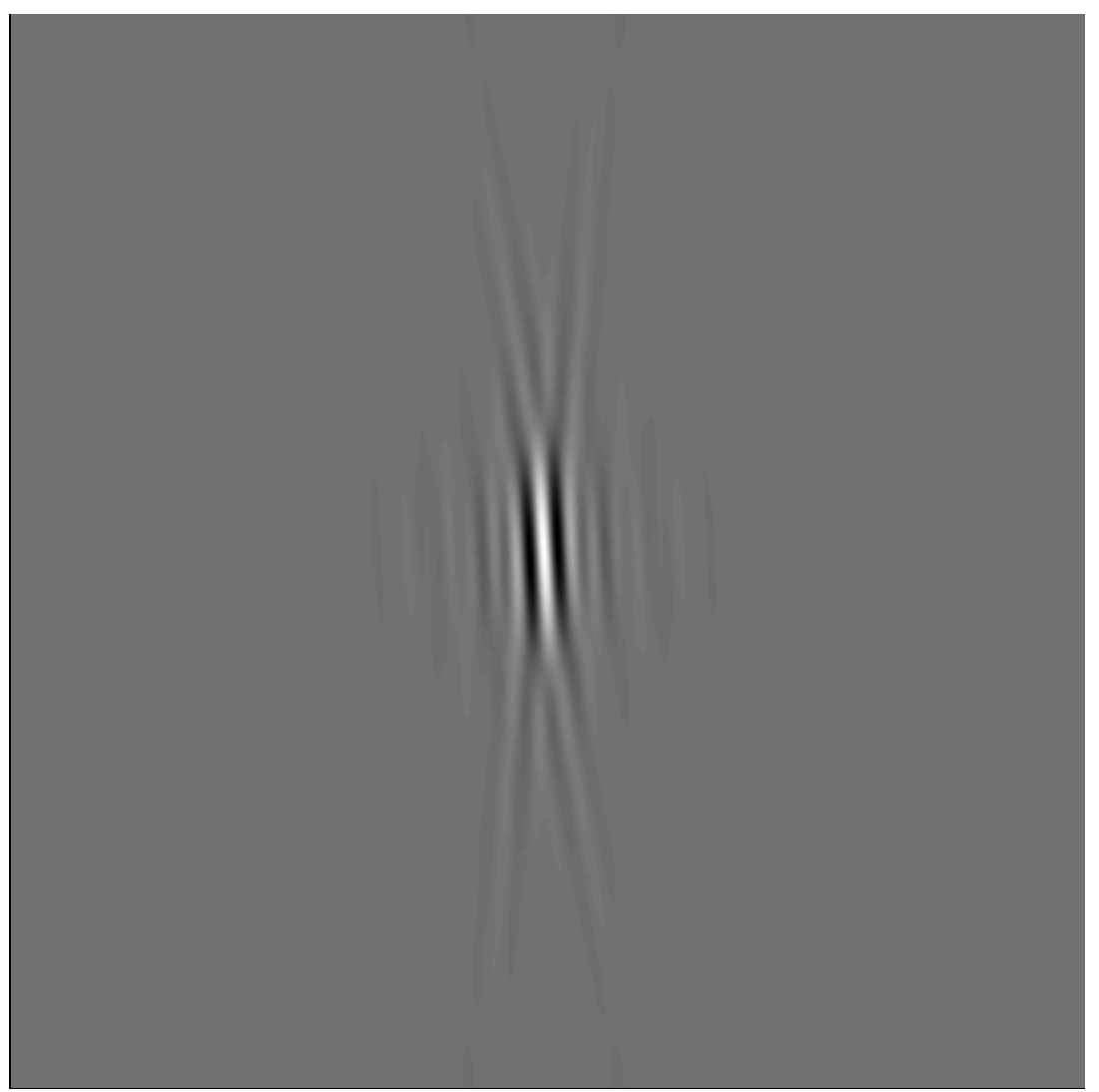}}\quad
	\subfloat[$\psi_{650}$]{\includegraphics[height=4.5cm]{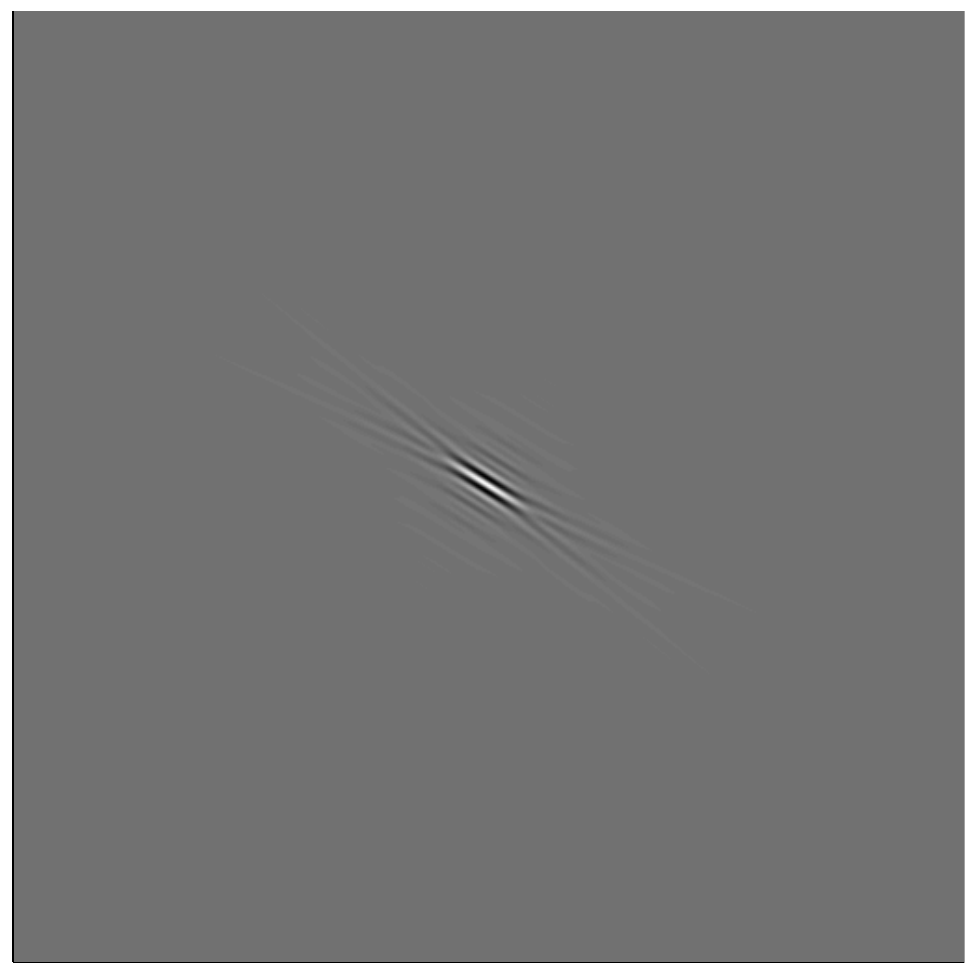}}
	\caption{Curvelets at different scales and different orientations. Left image shows a curvelet with orientation $\theta_{5,0}=0^\circ$ whereas the right image shows a curvelet with orientation $\theta_{6,5}=56.25^\circ$.}
	\label{fig:curvelets}
\end{figure}

The index set of the completed curvelet system is now given by
\begin{align}
	\mathcal{I} =\sparen{(-1,0,k):\,k\in\mathbb{Z}^2} \cup \sparen{(j,l,k):\,j\in\mathbb{N}_0,\,k\in\mathbb{Z}^2,\,-2^{\ceil{j/2}+1}\leq l<2^{\ceil{j/2}+1}}
	\eqqcolon \Ic_0 \; \cup \; \Ic_1.	
\end{align}
Note that each index $(j,l,k)\in \Ic$ has a 3 parameter structure, where $j$ denotes the \emph{scale}-parameter, $k=(k_1,k_2)$ is the \emph{location} parameter and $l$ is the \emph{orientation} parameter. The system $\sparen{\psi_{j,l,k}}_{(j,l,k)\in\mathcal{I}}$ is now complete in the sense that it constitutes a tight frame for $L^2(\R^2)$, \cite{Candes_CCTII}. For each $f\in L^2(\mathbb{R}^2)$ there is a curvelet representation
\begin{equation}
\label{eq:curvelet decomposition}
	f = \sum_{(j,l,k)\in\mathcal{I}}\h{\psi_{j,l,k},f}\psi_{j,l,k}
\end{equation}
and a Parseval relation holds,
\begin{equation*}
	\norm{f}_{L^2(\mathbb{R}^2)}^2 = \sum_{(j,l,k)\in\mathcal{I}}\abs{\h{\psi_{j,l,k},f}}^2.
\end{equation*}

We conclude this section by noting that curvelets can be understood as further development of wavelets \cite{Chui_IntroductionToWavelets}. Therefore, as it is well known from the theory of wavelets, the curvelet dictionary provides a sparse representation of a large class of functions. This property qualifies curvelets for the use within the framework of the sparse regularization. An even more important property of curvelets lies in the fact that they offer an optimally sparse representation of functions that are $C^2$ except form discontinuities along $C^2$ curves \cite{Candes_New_tight_Frames_of_Curvelets04}. This means, that curvelets encode edges in a very efficient way. Thus, sparse representations of functions with respect to the curvelet dictionary can be considered to be edge-preserving.

\subsection{Curvelet sparse regularization (CSR)}
\label{ssec:sparse regularization}
Our aim is to solve the limited angle reconstruction problem \eqref{eq:reconstruction problem} in a stable way such that the edges are preserved. To this end, we use sparse regularization of curvelet coefficients. The idea of sparse regularization is to determine a solution $f$ of the problem which is sparse or compressible with respect to the curvelet frame. Sparsity of $f$ means that the series expansion \eqref{eq:curvelet decomposition} of $f$ has only a very small number of curvelet coefficients $c_k$ which are non-zero, whereas compressibility of $f$ means that $f$ can be well approximated using a sparse series expansion. 

To this end, we have to formulate the reconstruction problem in the curvelet domain, i.e., we are interested in the recovery of the curvelet coefficients $c_{j,l,k}=\h{\psi_{j,l,k},f}$ of $f$ instead of the function $f$ itself. We assume that the unknown object $f$ can be represented by a finite linear combinations of curvelets, i.e., $f = \sum_{n=1}^N\h{\psi_n,f}\psi_n$ with $n=n(j,l,k)$. Further, we define the analysis operator $T$ and the synthesis operator $T^\ast$ as
\begin{equation*}
	Tf = \sparen{\h{\psi_n,f}}_{n=1}^N,\quad T^\ast c = \sum_{n=1}^N c_n\psi_n.
\end{equation*}
Then, the reconstruction problem \eqref{eq:reconstruction problem} can expressed in terms of curvelet coefficients via
\begin{equation}\label{eq:noisy tomography problem in curvelet domain}
	y^\delta = K c + \eta,\quad K:=\Rc_\Phi T^\ast.
\end{equation}

A solution to \eqref{eq:noisy tomography problem in curvelet domain} by sparse regularization of curvelet coefficients is given as a minimizer of the $\ell^1$-penalized Tikhonov type functional, i.e.,
\begin{equation}\label{eq:sparse regularization}
    \hat c=\argmin_{c\in\R^N}\sparen{\frac 1{2}\norm{Kc-y^\delta}^2_{L^2(S^1\times\mathbb{R})}+\norm{c}_{1,w}},
\end{equation}
where $\norm{c}_{1,w}=\sum_k w_k\abs{c_k}$ denotes the weighted $1$-norm with a weight sequence $w$ satisfying $w_k\geq w_0>0$.  A reconstruction for the original problem \eqref{eq:reconstruction problem} is then given by applying the synthesis operator to the regularized curvelet coefficients $\hat c$, i.e.,
\begin{equation}
\label{eq:curvelet sparse regularization reconstruction}
 	\hat f = \sum_{n=1}^N \hat c_n\psi_n.
\end{equation}
We note that sparse regularization is indeed a regularization method \cite{Daubechies_Iterative_Thresholding_Algorithm_for_Inverse_Problems04},\cite[Sec. 3.3]{VariationalMethodsScherzer2009}. Therefore, the computation of a reconstruction by \eqref{eq:sparse regularization} and \eqref{eq:curvelet sparse regularization reconstruction} is stable and favors sparse solutions \cite{Daubechies_Iterative_Thresholding_Algorithm_for_Inverse_Problems04,Lorenz_Semismooth_Newton_for_SparseTikhonov08}. We will refer to this method by the term \emph{curvelet sparse regularization} or CSR, respectively. In the previous subsection, we have discussed that sparse representation of functions with respect to the curvelet dictionary are edge-preserving. Consequently, curvelet sparse regularization gives rise to an edge-preserving reconstruction method.

In the following proposition we give a general characterization of minimizers of the $\ell^1$-penalized Tikhonov functional. Though the proof can be found in \cite{Lorenz_Semismooth_Newton_for_SparseTikhonov08}, we will recall it here for the sake of completeness. To this end, we define the so-called \emph{soft-thresholding operator} $\mathcal{S}_w:\R^N\to\R^N$  by 
\begin{equation}
\label{eq:soft-thresholding}
	(\mathcal{S}_w(x))_k=S_{w_k}(x_k):=\max\sparen{0,\abs{x_k}-w_k}\sgn(x_k).
\end{equation}

\begin{proposition}
\label{prop:sparse regularization solution}
	The set of minimizers of the $\ell^1$-penalized Tikhonov functional 
	\begin{equation*}\label{eq:ell1 penalized Tikhonov functional}
		\Psi(c)=\frac 1{2}\norm{Kc-y^\delta}^2_{L^2(S^1\times\R)}+\norm{c}_{1,w},
	\end{equation*}
	is non-empty. Furthermore, each minimizer $\hat c$ of $\Psi$ is characterized by
	\begin{equation}\label{eq:characterization of sparse solution}
		\hat{c}=\Sc_{\gamma w}\paren{\hat{c}-\gamma K^\ast(K\hat{c}-y^\delta)}
	\end{equation}
	for any $\gamma>0$.
\end{proposition}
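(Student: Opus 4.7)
The plan is to split the proof into an existence part and a characterization part, using standard convex analysis in $\R^N$.

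For existence, I would observe that $\Psi$ is proper (finite e.g.\ at $c=0$), continuous (hence lower semicontinuous), convex as a sum of two convex functions, and coercive because of the $\ell^1$ term: since $w_k\geq w_0>0$, one has $\Psi(c)\geq w_0\|c\|_1\to\infty$ as $\|c\|\to\infty$. In the finite dimensional setting $\R^N$ this combination immediately yields a (non-empty) set of minimizers, so no compactness argument beyond Bolzano--Weierstrass on sublevel sets is required.

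For the characterization I would work with subdifferentials. At a minimizer $\hat c$ the Fermat rule gives $0\in\partial\Psi(\hat c)$. Since the data-fidelity term is Fr\'echet differentiable with gradient $K^\ast(Kc-y^\delta)$ and the $\ell^1$-term is a proper convex continuous function, the sum rule applies and we obtain the optimality condition
\begin{equation*}
-K^\ast(K\hat c-y^\delta)\in \partial\|\hat c\|_{1,w}.
\end{equation*}
The next step is to show that this inclusion is equivalent, for any $\gamma>0$, to the fixed point equation $\hat c=\Sc_{\gamma w}(\hat c-\gamma K^\ast(K\hat c-y^\delta))$. For this I would use the standard characterization of the weighted soft-thresholding operator as the proximal mapping of $\gamma\|\cdot\|_{1,w}$: since the norm is separable, it suffices to verify componentwise that $y_k=S_{\gamma w_k}(x_k)$ if and only if $x_k-y_k\in\gamma w_k\partial|y_k|$, where $\partial|y_k|=\{\sgn(y_k)\}$ for $y_k\neq 0$ and $\partial|0|=[-1,1]$. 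A short case distinction ($y_k>0$, $y_k<0$, $y_k=0$) together with the explicit formula \eqref{eq:soft-thresholding} settles this equivalence.

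Finally, setting $x=\hat c-\gamma K^\ast(K\hat c-y^\delta)$ and $y=\hat c$ in the proximal characterization gives
\begin{equation*}
\hat c=\Sc_{\gamma w}\bigl(\hat c-\gamma K^\ast(K\hat c-y^\delta)\bigr)\;\Longleftrightarrow\; -\gamma K^\ast(K\hat c-y^\delta)\in \gamma\,\partial\|\hat c\|_{1,w},
\end{equation*}
and the latter, after dividing by $\gamma>0$, coincides with the optimality condition derived above. The main obstacle — though entirely routine — is the componentwise verification that the soft-thresholding operator is the proximal mapping of the weighted $\ell^1$-norm; once this is in place the two implications in the characterization follow immediately and the value of $\gamma>0$ plays no role beyond being positive.
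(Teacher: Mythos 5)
Your proposal is correct and follows essentially the same route as the paper: existence via convexity and coercivity, and the characterization via the optimality condition $-K^\ast(K\hat c-y^\delta)\in\partial\norm{\hat c}_{1,w}$ combined with the identification of $\Sc_{\gamma w}$ as the resolvent $(\id+\gamma\partial\norm{\cdot}_{1,w})^{-1}$, which is exactly your componentwise proximal-mapping verification. The only cosmetic difference is that the paper cites this resolvent identity from the literature rather than spelling out the case distinction.
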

\begin{proof}
	We follow the proof of \cite{Lorenz_Semismooth_Newton_for_SparseTikhonov08}. Since $\Psi$ is convex and coercive it follows that there is a minimizer $\hat c$ of $\Psi$. We denote by $\partial f(c)$ the subdifferential of a convex function $f$ at $x$. Each minimizer $\hat c$ is characterized by the requirement (cf. \cite{Rockafellar1970})
	\begin{equation*}
		0\in\partial \Psi(\hat c) = K^\ast(K\hat c-g) + \partial \norm{c}_{1,w}
	\end{equation*}
	which is equivalent to
	\begin{equation*}
		-K^\ast(K\hat c-g)\in\partial \norm{\hat c}_{1,w}.
	\end{equation*}
	Multiplying by $\gamma>0$ and adding $\hat c$ to both sides yields
	\begin{equation*}
		\hat c -\gamma K^\ast(K\hat c-g)\in\hat c + \gamma\partial \norm{\hat c}_{1,w}=(\id + \gamma\partial \norm{\cdot}_{1,w})\hat c.
	\end{equation*}
	Following the arguments in \cite{Lorenz_Semismooth_Newton_for_SparseTikhonov08} we get that $(\id + \gamma\partial\norm{\cdot}_{1,w})^{-1}$ exists and is single valued. Hence, the above inclusion is characterized by the equation
	\begin{equation*}
		\hat c = (\id + \gamma\partial\norm{\cdot}_{1,w})^{-1}(\hat c -\gamma K^\ast(K\hat c-g)).
	\end{equation*}
	A simple calculation shows that $(\id + \gamma\partial\norm{\cdot}_{1,w})^{-1}=\Sc_{\gamma w}$.
\end{proof}

\section{Relation to biorthogonal curvelet decomposition (BCD) for  the Radon transform}
\label{sec:BCD reconstruction}
In this section we will show that if the data is available from the full angular range, i.e., if we are dealing with the Radon transform $\Rc$ rather than the limited angle Radon transform $\Rc_\Phi$, an explicit formula for the minimizer of the $\ell^1$-penalized Tikhonov functional \eqref{eq:sparse regularization} can be derived using the biorthogonal curvelet decomposition (BCD) for the Radon transform \cite{Candes_Recovering_Edges_in_Illposed_problems02}. This formula is closely related to the BCD based reconstruction which was also proposed in \cite{Candes_Recovering_Edges_in_Illposed_problems02}. Afterwards we will discuss that the curvelet sparse regularization can be understood as a natural generalization of the BCD reconstruction.

\subsection{Full angular range}
We now briefly recall the definition of the BCD for the Radon transform. For details we refer to  \cite{Candes_Recovering_Edges_in_Illposed_problems02}. If not otherwise stated, we let $n=n(j,l,k)\in\Ic$ and denote the curvelet frame by $\sparen{\psi_n}$. In order to derive the BCD for the Radon transform a pair of frames $\sparen{U_n}$ and $\sparen{V_n}$ is constructed for $\ran{\Rc}\subset L^2(\R\times S^1)$ such that 
\begin{equation*}
	\Rc\psi_n=2^{-j}V_n, \quad \Rc^\ast U_n=2^{-j}\psi_n
\end{equation*}
and a quasi-biorthogonal relation 
$\h{V_n,U_{n'}}_{L^2(\R\times S^1)}=2^{j-j'}\h{\psi_n,\psi_{n'}}$, 
holds for all $n,n'\in\Ic$. In particular, there is an $L^2$-norm equivalence property
\begin{equation*}
	\sum_{n\in\Ic}\abs{\h{g,U_n}_{L^2(\R\times S^1)}}^2 \asymp\norm{g}_{L^2(\R\times S^1)}^2,
\end{equation*}
for all $g\in\ran{\Rc}$. Similar relations hold for $\sparen{V_n}$. Using these notations, the BCD of the Radon transform is given by the following reproducing formula
\begin{equation}\label{eq:BCD reproducing formula}
	f = \sum_{n\in\Ic} 2^j \h{\Rc f,U_n}_{L^2(\R\times S^1)}\psi_n,
\end{equation}
where $f\in L^2(\R^2)$ is assumed to be a finite sum of curvelets $\sparen{\psi_n}$ \cite{Candes_Recovering_Edges_in_Illposed_problems02}. Note that the curvelet coefficients of $f$ are computed from the Radon transform data $\Rc f$. 

We now use the above the frames $(U_n)$, $(V_n)$ and its relations with $\Rc$ and $\Rc^\ast$, respectively, to compute the minimizer of the $\ell^1$-penalized Tikhonov functional. We assume that $y^\delta\in\ran{\Rc}$ and denote $\h{\cdot,\cdot}=\h{\cdot,\cdot}_{L^2(\R^2)}$ and $[\cdot,\cdot] = \h{\cdot,\cdot}_{L^2(\R\times S^1)}$. With $y_n^\delta=[y^\delta,U_n]$, we get
\begin{align*}
	\norm{\Rc f - y^\delta}_{L^2(\R\times S^1)}^2 &\asymp \sum_{n\in\Ic} \abs{[\Rc f - y^\delta, U_n]}^2\\
	&= \sum_{n\in\Ic} \abs{[\Rc f, U_n] - [y^\delta,U_n]}^2\\
	&= \sum_{n\in\Ic} \abs{\h{f, \Rc^\ast U_n} - [y^\delta,U_n]}^2\\
	&= \sum_{n\in\Ic} \abs{\h{f, 2^{-j}\psi_n} - [y^\delta,U_n]}^2\\
	&= \sum_{n\in\Ic} \abs{2^{-j}c_n - y^\delta_n}^2.
\end{align*}
Using the definition of $\norm{\cdot}_{1,w}$ we see that
\begin{equation*}
	\norm{K c - y^\delta}_{L^2(\R\times S^1)}^2+\norm{c}_{1,w}\asymp \sum_{n\in\Ic} \paren{\abs{y^\delta_n-2^{-j}c_n}^2 + \alpha w_n\abs{c_n}},
\end{equation*}
with a suitably chosen constant $\alpha>0$. Hence, we have
\begin{align}
	\notag
	\hat c &=\argmin_{c\in\R^N}\norm{K c - y^\delta}_{L^2(\R\times S^1)}^2+\norm{c}_{1,w}\\
	\label{eq:sparse regularization via BCD}
	&= \argmin_{c\in\R^N} \sum_{n\in\Ic} \paren{\abs{2^{-j}c_n - y^\delta_n}^2 + \alpha w_n\abs{c_n}},
\end{align}
which can be minimized by minimizing each term in \eqref{eq:sparse regularization via BCD} separately. Note that each term in \eqref{eq:sparse regularization via BCD} is of the form $\abs{ax-b}^2+c\abs{x}$ and its minimum is given by $S_{c/(2a^2)}(b/a)$ where $S_{c/(2a^2)}$ is the soft-thresholding function from \eqref{eq:soft-thresholding} with the threshold $c/(2a^2)$. Therefore, the sparsity regularized curvelet coefficients $\hat c$ are given by 
\begin{equation*}
	\hat c_n= S_{2^{2j-1}\alpha w_n}(2^j y_n^\delta).
\end{equation*}
We have now proven the following Theorem.

\begin{theorem}
The solution of the full angular problem $y^\delta=\Rc f + \eta$ via curvelet sparse regularization is given by the (closed) formula
\begin{equation}\label{eq:sparse minimizer via BCD}
	\hat f=\sum_{n\in\Ic} S_{2^{2j-1}\alpha w_n}(2^j y_n^\delta)\psi_n.
\end{equation}
\end{theorem}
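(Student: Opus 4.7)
The plan is to follow the chain of identities already sketched in the discussion preceding the theorem: use the BCD to diagonalize the data-fidelity term against the dual frame $\{U_n\}$, thereby reducing the Tikhonov functional to a separable sum over the curvelet index set $\Ic$, and then solve each one-dimensional subproblem explicitly by soft-thresholding.

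First I would apply the norm equivalence $\norm{g}_{L^2(\R\times S^1)}^2 \asymp \sum_{n\in\Ic}|[g,U_n]|^2$ to $g = \Rc f - y^\delta$, which lies in $\ran(\Rc)$ because $y^\delta$ is assumed to. Invoking the quasi-adjoint relation $\Rc^\ast U_n = 2^{-j}\psi_n$ turns each inner product into $[\Rc f, U_n] = \h{f, \Rc^\ast U_n} = 2^{-j} c_n$, so with $y_n^\delta := [y^\delta, U_n]$ the data-fidelity term becomes (up to the frame-bound constants, which I absorb into $\alpha$) $\sum_n |2^{-j}c_n - y_n^\delta|^2$. Since the weighted $\ell^1$-penalty $\norm{c}_{1,w} = \sum_n w_n |c_n|$ is already decoupled, the full functional to minimize reads
\begin{equation*}
\sum_{n\in\Ic}\Bigl(\bigl|2^{-j}c_n - y_n^\delta\bigr|^2 + \alpha w_n |c_n|\Bigr),
\end{equation*}
and can be optimized term by term in the scalar $c_n$.

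Each summand has the form $|ax - b|^2 + c|x|$ with $a = 2^{-j}$, $b = y_n^\delta$, and $c = \alpha w_n$. A direct subdifferential calculation (or equivalently Proposition \ref{prop:sparse regularization solution} applied in one dimension) shows that the unique minimizer is $S_{c/(2a^2)}(b/a) = S_{2^{2j-1}\alpha w_n}(2^j y_n^\delta)$. Plugging this back into the synthesis formula $\hat f = \sum_n \hat c_n \psi_n$ yields the representation \eqref{eq:sparse minimizer via BCD}.

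The only conceptual subtlety—rather than a computational obstacle—is the replacement of $\asymp$ by $=$ during the diagonalization: strictly speaking, the BCD norm equivalence reduces the Tikhonov functional to a separable one of the \emph{same form} only up to the frame-bound constants, so the argument should be read as identifying the minimizer of an equivalent functional whose constants are absorbed into the regularization parameter. A fully rigorous version would either track the frame-bound constants through the subdifferential calculation explicitly, or pass to the canonical dual frames of $\{U_n\}$, $\{V_n\}$ for which the equivalence becomes a Parseval identity on $\ran(\Rc)$; once that point is made precise, the separable minimization argument carries through verbatim and produces the stated closed-form reconstruction.
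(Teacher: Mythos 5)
Your proposal is correct and follows essentially the same route as the paper: diagonalize the data-fidelity term via the BCD frame $\{U_n\}$ and the relation $\Rc^\ast U_n = 2^{-j}\psi_n$, reduce to a separable sum, and soft-threshold each scalar subproblem to obtain $\hat c_n = S_{2^{2j-1}\alpha w_n}(2^j y_n^\delta)$. Your closing remark about the norm equivalence $\asymp$ being absorbed into the constant $\alpha$ is exactly the point the paper handles with its ``suitably chosen constant $\alpha>0$,'' so the two arguments coincide.
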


The relation between the reproducing formula \eqref{eq:BCD reproducing formula} and \eqref{eq:sparse minimizer via BCD} is now obvious. If the thresholding parameters $w_n=w_n(\delta)$ in \eqref{eq:sparse minimizer via BCD} are chosen such that $w_n(\delta)\to 0$ as $\delta\to 0$, i.e., they vanish if there is no noise present in the data, then \eqref{eq:sparse minimizer via BCD} reduces to \eqref{eq:BCD reproducing formula}. On the other hand, if the data is corrupted by noise, then, the curvelet sparse regularized solution is simply a thresholded version of the BCD reproducing formula. The stabilizing character of the curvelet sparse regularization is reflected by the inherent thresholding of the curvelet coefficients (see also \eqref{eq:characterization of sparse solution}).

In \cite{Candes_Recovering_Edges_in_Illposed_problems02} a very similar reconstruction rule was derived. Starting form the BCD reproducing formula the authors proposed to use soft-thresholding of  coefficients in \eqref{eq:BCD reproducing formula}, i.e.,
\begin{equation}
	\label{eq:BCD inversion of Radon data}
	\hat{f} = \sum_{n\in\Ic} S_{\tau_j}\paren{2^j\h{y^\delta,U_n}_{L^2(\R\times S^1)}}\psi_n,
\end{equation}
with a scale dependent threshold $\tau_j$. We see that this formula coincides with \eqref{eq:sparse minimizer via BCD} for a suitably chosen thresholding sequence $\tau=(\tau_j)$. 

\begin{remark}
	Note the ill-posed nature of the reproducing formula \eqref{eq:BCD reproducing formula}. This is evident because the coefficients $2^j\h{\Rc f,U_\mu}_{L^2(\R\times S^1)}$ corresponding to fine scales (large $j$) are amplified by the factor $2^j$. Since noise is a fine scale phenomenon, there will be very large reconstruction errors when the data is corrupted by noise. 
\end{remark}

\subsection{Limited angular range \& Limitations of the BCD}
\label{subsec:limitations of BCD reconstruction}
We have seen that there is an explicit expression \eqref{eq:sparse minimizer via BCD} for the CSR reconstruction in the case of full angular tomography.  We also noted that the thresholded BCD reconstruction \eqref{eq:BCD inversion of Radon data} leads (under certain conditions) to the same reconstruction. To extend this observation to the limited angle tomography a biorthogonal curvelet decomposition for the limited angle Radon transform would be needed.  To our knowledge there is no such BCD available for the limited angle Radon transform.  Consequently, in the case of limited angle tomography, the CSR reconstruction \eqref{eq:curvelet sparse regularization reconstruction} can not be expressed explicitly as it was done for the full angular range in \eqref{eq:sparse minimizer via BCD} and the BCD reconstruction of Cand{\`e}s and Donohod \cite{Candes_Recovering_Edges_in_Illposed_problems02} can not be applied in this situation. 

In contrast to the BCD method, a reconstruction of the limited angle problem can be computed using CSR. Hence, curvelet sparse regularization can be understood as the natural generalization of the thresholded BCD reconstruction.

Curvelet sparse regularization offers even more flexibility compared to the BCD method. For example, the implementation of the thresholded BCD method is difficult for acquisition geometries which are different from the parallel geometry. This is because the BCD method requires discretization of the functions $U_n$ which live in the Radon domain. The implementation of the curvelet sparse regularization approach, however, is independent of the acquisition geometry. One needs only to implement the system matrix. Moreover, the generalization to higher dimensions is also easier accessible via curvelet sparse regularization approach.

\section{Characterization of limited angle Radon transform}
\label{sec:characterizations}
In Section \ref{sec:stabilization} we presented curvelet sparse regularization as our method of choice for the limited angle tomography because it is stable, edge-preserving and flexible. In Proposition \ref{prop:sparse regularization solution} we noted the existence of a solution and showed that each minimizer of the $\ell^1$-penalized Tikhonov functional \eqref{eq:sparse regularization} is given as a fixed point of some operator (cf. \eqref{eq:characterization of sparse solution}).  This characterization is generic in the sense that it does not take into account the special structure of the underlying problem. 

The goal of this section is to give a characterization of the minimizer \eqref{eq:sparse regularization} which is adapted to the setting of limited angle geometry. In the following we will show that, depending on the available angular range, a big portion of the curvelet coeffiients of the CSR reconstruction are zero. 

We state our main results first and postpone the proofs to the end of this section.
\begin{theorem}
\label{thm:null space}
	Let $0<\Phi<\pi/2$. We define the polar wedge $W_{\Phi}$ by
	\begin{equation}
	\label{eq:scale dependent polar wedge}
		W_\Phi = \sparen{\xi\in\R^2:\,\xi=r(\cos\omega,\sin\omega),\,r\in\R,\abs{\omega}\leq\Phi}.
	\end{equation}
	Moreover, we define a proper subset of the curvelet index set by
	\begin{equation}
	\label{eq:invisible curvelet indices}
		\Ic_\Phi^{\mathrm{invisible}}=\sparen{(j,l,k)\in\Ic:\,\supp\hat\psi_{j,l,k}\cap W_\Phi=\emptyset},
	\end{equation}
	where $\psi_{j,l,k}$ denotes a curvelet and $\Ic$ is the curvelet index set (cf. Subsection \ref{ssec:curvelet frame}). Then, 
	\begin{equation}
	\label{eq:null space}
		\Rc_\Phi\psi_{j,l,k}\equiv 0 \text{ for all } (j,l,k)\in\Ic_\Phi^{\mathrm{invisible}}.
	\end{equation}
\end{theorem}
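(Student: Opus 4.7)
The plan is to reduce the statement to a claim about Fourier supports via the projection--slice (Fourier slice) theorem. Recall that for Schwartz functions $f\in\Sc(\R^2)$ the identity
\begin{equation*}
    \widehat{\Rc_\theta f}(\sigma) \;=\; \sqrt{2\pi}\,\hat f\paren{\sigma\cos\theta,\sigma\sin\theta}, \qquad \sigma\in\R,
\end{equation*}
holds, so $\Rc_\theta f\equiv 0$ if and only if $\hat f$ vanishes on the line $\sparen{\sigma(\cos\theta,\sin\theta):\sigma\in\R}$. Each curvelet $\psi_{j,l,k}$ belongs to $\Sc(\R^2)$: its Fourier transform is constructed in \eqref{eq:generating curvelets} from the smooth compactly supported windows $W,V\in C^\infty$ (and is subsequently modified only by a rotation and a modulation coming from \eqref{eq:curvelets}), hence $\hat\psi_{j,l,k}\in C^\infty_c(\R^2)\subset\Sc(\R^2)$, and the Fourier slice identity applies directly.

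Next I would identify the geometric set swept out by the slice directions. Taking the union over $\theta\in[-\Phi,\Phi]$ of the slice lines $\sparen{\sigma(\cos\theta,\sin\theta):\sigma\in\R}$ and observing that letting $\sigma$ range over all of $\R$ (not just $\sigma\geq 0$) also picks up the antipodal half, one sees that this union is exactly the double polar wedge $W_\Phi$ from \eqref{eq:scale dependent polar wedge}. Combining this with the slice identity yields the equivalence
\begin{equation*}
    \Rc_\Phi f \equiv 0 \text{ on } [-\Phi,\Phi]\times\R \quad\Longleftrightarrow\quad \hat f\big|_{W_\Phi}\equiv 0.
\end{equation*}

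Finally I would specialize $f=\psi_{j,l,k}$ for $(j,l,k)\in\Ic_\Phi^{\mathrm{invisible}}$: the defining condition \eqref{eq:invisible curvelet indices} says $\supp\hat\psi_{j,l,k}\cap W_\Phi=\emptyset$, so $\hat\psi_{j,l,k}$ vanishes identically on $W_\Phi$, and the equivalence above gives $\Rc_\Phi\psi_{j,l,k}\equiv 0$, which is \eqref{eq:null space}.

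There is no substantive obstacle; the argument is essentially a bookkeeping exercise around the projection--slice theorem. The only points requiring a little care are (i) matching the sign conventions so that negative $\sigma$ covers the antipodal half of the wedge and the union of slices is genuinely $W_\Phi$, and (ii) noting the Schwartz regularity of curvelets, which is immediate from the smooth compactly supported Fourier-side windows used in Subsection \ref{ssec:curvelet frame}.
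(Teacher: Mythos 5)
Your argument is correct, and it rests on the same underlying principle as the paper's proof --- the projection--slice theorem --- but it takes a noticeably shorter route. The paper does not invoke the slice theorem as a black box: it first proves Lemma \ref{lem:integration along lines} (that integrating $f$ along a line through the origin equals integrating $\hat f$ along the perpendicular line, established via the delta-approximation Lemma \ref{lem:delta approximation}), and then pushes this all the way to an explicit closed-form expression for $\Rc\psi_{j,l,k}(\theta(\omega),s)$ in Theorem \ref{thm:radon transform of curvelets}, namely a product of the angular window $V\paren{\frac{2^{\ceil{j/2}+1}}{\pi}(\omega+\theta_{j,l})}$ with a term involving $\widehat W$. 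Theorem \ref{thm:null space} is then read off from the vanishing of the angular factor, which simultaneously yields the concrete reformulation $\Ic_\Phi^{\mathrm{invisible}}=\sparen{(j,l,k):\theta_{j,l}\notin A_{\Phi,j}}$ in terms of the curvelet orientations and the scale-dependent angular sets $A_{\Phi,j}$. You bypass the explicit formula entirely by observing that $\widehat{\Rc_\theta f}(\sigma)=\sqrt{2\pi}\,\hat f(\sigma\theta)$ forces $\Rc_\theta f\equiv 0$ whenever $\hat f$ vanishes on the slice line, and that the union of slice lines over $\theta\in[-\Phi,\Phi]$ is exactly $W_\Phi$; this is precisely the heuristic the paper itself gives in the summary paragraph following its proof. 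What your shortcut loses is the by-product: the explicit formula of Theorem \ref{thm:radon transform of curvelets} is reused later (analytic computation of the system matrix in Section \ref{sec:discussion}, and the angle-based description of the invisible index set that drives the dimensionality reduction in Section \ref{sec:adapted csr}), so the paper's longer detour is doing double duty. Your points of care --- that negative $\sigma$ sweeps out the antipodal half of the wedge, and that the relevant curvelets have smooth compactly supported Fourier transforms (indeed the low-pass elements $j=-1$ always meet $W_\Phi$ and so are never invisible) --- are exactly the right ones, and both check out.
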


The above theorem characterizes a subspace of the kernel of the limited angle Radon transform in terms of curvelets. Using this Theorem \ref{thm:null space}, a characterization of curvelet sparse regularized solutions to the limited angle problem can be derived.

\begin{theorem}
\label{thm:characterization of sparse regularized curvelet coeffs}
	Let $0<\Phi<\pi/2$, $y^\delta\in\ran{\Rc_\Phi}$ and let $\Ic_\Phi^{\mathrm{invisible}}$ be defined by \eqref{eq:invisible curvelet indices}. Then,
	\begin{equation*}
		\hat c=\argmin_{c\in\R^N}\sparen{\frac 1{2}\norm{Kc-y^\delta}^2_{L^2(S^1\times\mathbb{R})}+\norm{c}_{1,w}}
	\end{equation*}
	satisfies
	\begin{equation*}
	\hat c_{j,l,k} = 0 \text{ for all }(j,k,l)\in\Ic_\Phi^{\mathrm{invisible}}.
	\end{equation*}
\end{theorem}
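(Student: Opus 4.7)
The plan is to combine the fixed point characterization from Proposition \ref{prop:sparse regularization solution} with the vanishing property established in Theorem \ref{thm:null space}. Specifically, every minimizer $\hat c$ satisfies
\begin{equation*}
  \hat c = \Sc_{\gamma w}\bigl(\hat c - \gamma K^\ast (K\hat c - y^\delta)\bigr)
\end{equation*}
for any $\gamma>0$. Since this equation decouples coordinate-wise (the soft-thresholding operator acts componentwise), the strategy is to fix an index $n=n(j,l,k)\in\Ic_\Phi^{\mathrm{invisible}}$ and show that the $n$-th component of the right-hand side collapses to $S_{\gamma w_n}(\hat c_n)$, whose only fixed point is $0$.

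First I would compute the $n$-th component of $K^\ast(K\hat c - y^\delta)$. Since $K=\Rc_\Phi T^\ast$ and $T$ is the curvelet analysis operator, we have $K^\ast = T\Rc_\Phi^\ast$, hence
\begin{equation*}
  \bigl(K^\ast(K\hat c - y^\delta)\bigr)_n = \h{\psi_n,\Rc_\Phi^\ast(K\hat c - y^\delta)}_{L^2(\R^2)} = \h{\Rc_\Phi \psi_n,\,K\hat c - y^\delta}_{L^2(S^1\times\R)}.
\end{equation*}
By Theorem \ref{thm:null space}, for $n\in\Ic_\Phi^{\mathrm{invisible}}$ the curvelet $\psi_n$ lies in the kernel of $\Rc_\Phi$, so $\Rc_\Phi\psi_n\equiv 0$ and therefore $\bigl(K^\ast(K\hat c - y^\delta)\bigr)_n=0$. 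The fixed point equation at index $n$ then reduces to
\begin{equation*}
  \hat c_n = S_{\gamma w_n}(\hat c_n).
\end{equation*}
Since $w_n\geq w_0>0$, the soft-thresholding scalar equation $x = S_{\gamma w_n}(x) = \max\{0,|x|-\gamma w_n\}\sgn(x)$ forces either $|x|\leq \gamma w_n$ and $x=0$, or $|x|>\gamma w_n$ and $|x|=|x|-\gamma w_n$, the latter being impossible. Hence $\hat c_n=0$, which is the claim.

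No step looks delicate: existence of a minimizer and the fixed-point identity are provided by Proposition \ref{prop:sparse regularization solution}, and the kernel statement is Theorem \ref{thm:null space}. The only point that deserves care is verifying that the adjoint computation is legitimate in our finite-dimensional reconstruction setting (so that $(K^\ast h)_n = \h{\Rc_\Phi\psi_n, h}$), but this follows directly from the definitions of $T$ and $T^\ast$ given before \eqref{eq:noisy tomography problem in curvelet domain}. Thus the main leverage in the proof is entirely carried by Theorem \ref{thm:null space}; everything else is a short coordinatewise reduction.
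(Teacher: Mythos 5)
Your proof is correct and takes essentially the same route as the paper: both rest on Theorem \ref{thm:null space} forcing the invisible components of $K^\ast(K\hat c - y^\delta)$ to vanish (via $(K^\ast h)_n=\h{\Rc_\Phi\psi_n,h}$), after which a scalar, coordinatewise argument yields $\hat c_n=0$. The only differences are cosmetic: you pass through the soft-thresholding fixed point of Proposition \ref{prop:sparse regularization solution} and solve $\hat c_n=S_{\gamma w_n}(\hat c_n)$, whereas the paper invokes Lemma \ref{lem:subdifferential of ell1 norm} to localize the subdifferential inclusion and conclude $0\in\partial(w_n\abs{\hat c_n})$; and your argument never actually uses the hypothesis $y^\delta\in\ran{\Rc_\Phi}$, which the paper employs only to write $y^\delta=Kc^\delta$ and which is likewise inessential.
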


We start to develop the proof of Theorem \ref{thm:null space} first. To this end, we need some auxiliary results. Though the content of the following lemma is classical we will give a proof for the sake of completeness.

\begin{lemma}
\label{lem:delta approximation}
	Let $b>0$ and $\delta^b$ be a function defined by
	\begin{equation}
	\label{eq:approximate delta function}
		\delta^b(x)=\frac{1}{2\pi}\int_{-b}^b e^{\pm ix\cdot\xi}\d \xi.
	\end{equation}
	 Then, it holds that $\delta^b\to\delta$ pointwise in $\Sc'(\R)$ as $b\to\infty$, i.e., for $\phi\in\Sc(\R)$ we have
	\begin{equation}\label{eq:delta approximation}
		\lim_{b\to\infty}\int_{\R}\delta^b(x)\phi(x)\d x=\phi(0).
	\end{equation}
\end{lemma}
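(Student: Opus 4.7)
My plan is to reduce the statement to the Fourier inversion formula via Fubini, which is the cleanest way to handle the fact that $\delta^b$ is not absolutely integrable.

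First I would justify interchanging the order of integration in
\begin{equation*}
\int_\R \delta^b(x)\phi(x)\d x = \frac{1}{2\pi}\int_\R\int_{-b}^b e^{\pm ix\xi}\d\xi\,\phi(x)\d x.
\end{equation*}
Since the inner integration is over a bounded set and $\phi\in\Sc(\R)\subset L^1(\R)$, Fubini's theorem applies directly (the integrand is bounded in modulus by $|\phi(x)|$ on $\R\times[-b,b]$). Exchanging the integrals yields
\begin{equation*}
\int_\R \delta^b(x)\phi(x)\d x = \frac{1}{\sqrt{2\pi}}\int_{-b}^b \hat\phi(\mp\xi)\d\xi,
\end{equation*}
using the definition of the Fourier transform from the Notation subsection.

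Next I would pass to the limit $b\to\infty$. Because $\phi\in\Sc(\R)$, its Fourier transform $\hat\phi$ is again a Schwartz function and in particular lies in $L^1(\R)$. Therefore the dominated convergence theorem (with dominating function $|\hat\phi|$) gives
\begin{equation*}
\lim_{b\to\infty}\frac{1}{\sqrt{2\pi}}\int_{-b}^b \hat\phi(\mp\xi)\d\xi = \frac{1}{\sqrt{2\pi}}\int_\R \hat\phi(\mp\xi)\d\xi.
\end{equation*}
By the substitution $\eta=\mp\xi$ this equals $\frac{1}{\sqrt{2\pi}}\int_\R \hat\phi(\eta)\d\eta$, and the Fourier inversion formula evaluated at $x=0$ identifies this last integral as $\phi(0)$.

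Combining these two steps proves \eqref{eq:delta approximation}. There is no real obstacle here — the only thing to watch is bookkeeping of the $(2\pi)^{-1/2}$ factors in the chosen Fourier convention and checking that the $\pm$ sign in the exponent washes out under the substitution, so that a single argument treats both cases simultaneously.
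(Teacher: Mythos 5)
Your proof is correct, but it takes a genuinely different route from the paper. You reduce the claim to the Fourier inversion formula: Fubini (legitimate, since the inner integral is over the bounded set $[-b,b]$ and $\phi\in L^1$) turns the pairing into $\frac{1}{\sqrt{2\pi}}\int_{-b}^b\hat\phi(\mp\xi)\,\mathrm{d}\xi$, dominated convergence with majorant $|\hat\phi|\in L^1$ passes to the full line, and inversion at the origin gives $\phi(0)$; the $\pm$ sign indeed washes out under $\eta=\mp\xi$. The paper instead works entirely on the spatial side: it computes $\delta^b(x)=\frac{1}{\pi}\frac{\sin(bx)}{x}$ explicitly, splits the integral at $|x|=\epsilon$, kills the tail by the Riemann--Lebesgue lemma, and handles the near-origin piece by a first-order Taylor expansion of $\phi$ together with the sine-integral asymptotics $\lim_{x\to\infty}\int_0^x \sin t/t\,\mathrm{d}t=\pi/2$. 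Your argument is shorter and cleaner but leans on the inversion theorem for Schwartz functions as a black box; the paper's Dirichlet-kernel computation is more elementary and self-contained, essentially reproving the relevant special case of inversion from scratch. Both are complete proofs, and your version is arguably better matched to how the lemma is actually used later (in Lemma \ref{lem:integration along lines}, which is itself a Fubini-plus-Fourier manipulation).
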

\begin{proof}
	A simple calculation shows that
	\begin{equation*}
		\delta^b(x) = \frac{1}{\pi}\frac{\sin(bx)}{x}.
	\end{equation*}
	Let $\phi\in\Sc(\R)$. We split the integral in \eqref{eq:delta approximation}
	\begin{align*}
		\frac{1}{\pi} \int_{-\infty}^\infty \frac{\sin(bx)}{x}\phi(x)\d x 
		&= \frac{1}{\pi} \int_{-\epsilon}^\epsilon \frac{\sin(bx)}{x}\phi(x)\d 
		x +  \frac{1}{\pi} \int_{\abs{x}>\epsilon }\frac{\sin(bx)}{x}\phi(x)\d 
		x\\[2ex]
		&=: \; I_1(b)\quad + \quad I_2(b).
	\end{align*}
	
	Observe that $g(x)=\chi_{\sparen{\abs{x}>\epsilon}}(x)\phi(x)/x$ is in $L^1(\R)$ and it follows from the Riemann-Lebesgue lemma \cite[Theorem 1.1 + 1.2]{SteinWeiss1971} that
	\begin{equation*}\label{aux:delta approximation}
		\lim_{b\to\infty} I_2(b) =\lim_{b\to\infty}\frac{1}{\pi}\int_\R\sin(bx)g(x)\d x = \sqrt{\frac{2}{\pi}}\lim_{b\to\infty}\Im(\,\hat g(b))=0.
	\end{equation*}

	To compute $\lim_{b\to\infty} I_1(b)$ we use the Taylor expansion of $\phi$ at $x=0$, $\phi(x)=\phi(0)+\phi\,'(\xi_x)x$ with $x\in(-\epsilon,\epsilon)$ and $\xi_x\in[-x,x]$. Note that $\phi'$ is bounded and, hence, $\phi'(\xi_x)$ is integrable on $(-\epsilon,\epsilon)$.
	\begin{align*}
		\lim_{b\to\infty} I_2(b) &=  \lim_{b\to\infty} \int_{-\epsilon}^\epsilon\delta^b(x)\phi(x)\d x \\
		&=\lim_{b\to\infty} \int_0^\epsilon\delta^b(x)(\phi(x)+\phi(-x))\d x \\\
		&=\frac{2\phi(0)}{\pi} \lim_{b\to\infty} \int_0^{\epsilon}\frac{\sin(bx)}{x}\d x + \frac{1}{\pi} \lim_{b\to\infty} \int_0^{\epsilon}\sin(bx)(\phi'(\xi_x)+\phi'(-\xi_x))\d x\\
		&=\frac{2\phi(0)}{\pi} \lim_{b\to\infty} \int_0^{b\cdot\epsilon}\frac{\sin(x)}{x}\d x\\
		&=\phi(0),
	\end{align*}
	where we have again applied the Riemann-Lebesgue lemma to the function $g(x)=\chi_{(0,\epsilon)}(x)(\phi'(\xi_x)+\phi'(-\xi_x))$ and have used the asymptotics for the sine integral, $\lim_{x\to\infty}\int_0^x\sin t/t\d t=\pi/2$.
\end{proof}

The key observation for the proof of Theorem \ref{thm:null space} is contained in the following lemma.

\begin{lemma}\label{lem:integration along lines}
	For $f\in\Sc(\R^2)$ and $\xi=(\cos\eta,\sin\eta)$, $\xi^\bot=(-\sin\eta,\cos\eta)$, we have
	\begin{equation}\label{eq:integration along lines}
		\int_\R f(t\cdot\xi)\d t = \int_\R \hat f(t\cdot\xi^\bot)\d t.
	\end{equation}
	That is, integration in the spatial domain along a line through the origin corresponds to the integration along a perpendicular line through the origin in the frequency domain.
\end{lemma}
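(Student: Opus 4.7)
The plan is to combine Fourier inversion with the approximate delta function supplied by Lemma~\ref{lem:delta approximation}. Since $f\in\Sc(\R^2)$, Fourier inversion gives $f(t\xi)=(2\pi)^{-1}\int_{\R^2}\hat f(\zeta)\,e^{it\,\xi\cdot\zeta}\,d\zeta$ for every $t\in\R$. I would first truncate the outer integration in $t$ to a bounded interval $[-b,b]$ and swap the order of integration by Fubini (legitimate since $\hat f$ is Schwartz and $[-b,b]$ has finite measure), so that
\[
\int_{-b}^{b} f(t\xi)\,dt \;=\; \int_{\R^2}\hat f(\zeta)\cdot\frac{1}{2\pi}\int_{-b}^{b}e^{it(\xi\cdot\zeta)}\,dt\,d\zeta \;=\; \int_{\R^2}\hat f(\zeta)\,\delta^b(\xi\cdot\zeta)\,d\zeta,
\]
where the last equality is just the definition \eqref{eq:approximate delta function} of $\delta^b$ evaluated at the scalar argument $\xi\cdot\zeta$.

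The second step is an orthogonal change of coordinates in the $\zeta$-integral. Because $\{\xi,\xi^\bot\}$ is an orthonormal basis of $\R^2$, writing $\zeta=s\xi+u\xi^\bot$ gives $d\zeta=ds\,du$ and $\xi\cdot\zeta=s$. A further application of Fubini (again justified by the rapid decay of $\hat f$) decouples the variables and produces
\[
\int_{-b}^{b}f(t\xi)\,dt \;=\; \int_{\R} g(s)\,\delta^b(s)\,ds, \qquad g(s):=\int_{\R}\hat f(s\xi+u\xi^\bot)\,du.
\]
Since $\hat f\in\Sc(\R^2)$, integrating out the transverse variable $u$ leaves a function $g\in\Sc(\R)$ to which Lemma~\ref{lem:delta approximation} applies.

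That lemma then yields $\int_{\R}g(s)\,\delta^b(s)\,ds \to g(0)=\int_{\R}\hat f(u\xi^\bot)\,du$ as $b\to\infty$. On the other hand $t\mapsto f(t\xi)$ is absolutely integrable, so the left-hand side converges to $\int_{\R}f(t\xi)\,dt$. Matching the two limits gives \eqref{eq:integration along lines}. There is no substantive obstacle: the identity is essentially the distributional Fourier-slice statement that integration along a line in space corresponds to a slice at the origin in frequency, and the proof merely makes this rigorous via the approximants $\delta^b$. The only points requiring care are the two Fubini applications and the observation that partial integration of a Schwartz function remains Schwartz, so that the previous lemma may be invoked at the decisive step.
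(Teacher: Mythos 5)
Your proof is correct and follows essentially the same route as the paper: Fourier inversion, truncation of the line integral to $[-b,b]$, Fubini, and the approximate identity $\delta^b$ from Lemma~\ref{lem:delta approximation}. The only organizational difference is that you handle the direction $\xi$ by the orthogonal change of variables $\zeta=s\xi+u\xi^\bot$ and integrate out the transverse variable before invoking the delta lemma, whereas the paper first proves the axis-aligned case and then appeals to the commutation of the Fourier transform with rotations; both are equally valid.
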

\begin{proof}
	We first show that
	\begin{equation}\label{aux:integration along lines}
		\int_\R f(x_1,0)\d x_1 = \int_\R \hat f(0,\xi_2)\d \xi_2.
	\end{equation}
	Using the notation $\delta^b(x)=(2\pi)^{-1}\int_{-b}^b e^{ix\xi}\d\xi$ we compute
	\begin{align*}
		\int_\R f(x_1,0)\d x_1 &= \frac{1}{2\pi}\int_\R \int_{\R^2} \hat f(\xi)e^{ix_1\xi_1}\d\xi\d x_1\\
		&=  \frac{1}{2\pi}\lim_{b\to\infty}\int_{-b}^b \int_{\R^2} \hat f(\xi)e^{ix_1\xi_1}\d\xi\d x_1\\
		&= \frac{1}{2\pi}\lim_{b\to\infty}\int_{\R^2} \hat f(\xi) \paren{\int_{-b}^b  e^{ix_1\xi_1}\d x_1} \d\xi\\
		&= \lim_{b\to\infty}\int_{\R}\int_\R \hat f(\xi_1,\xi_2) \delta^b(\xi_1) \d\xi_1\d\xi_2\\
		&= \int_{\R}\paren{\lim_{b\to\infty}\int_\R \hat f(\xi_1,\xi_2) \delta^b(\xi_1) \d\xi_1}\d\xi_2\\
		&= \int_{\R}\hat f(0,\xi_2)\d\xi_2,
	\end{align*}
	where we have used Lemma \ref{lem:delta approximation}. In the above computation, the change of the integration order and the interchange of the limit process and integration are allowed due to Fubini's theorem and the dominated convergence theorem, respectively.
	
	Since Fourier transform and rotation commute, we get using notations in Subsection \ref{subsec:notation} and \eqref{aux:integration along lines} that
	\begin{align*}
		\int_\R f(t\cdot\xi)\d t &= \int_\R f\paren{R_{-\eta}\cdot\tvector{t}{0}}\d t\\
		&= \int_\R \rho_{-\eta} f(t,0)\d t\\
		&= \int_\R \widehat{\rho_{-\eta} f}(0,t)\d t\\
		&= \int_\R \hat f\paren{R_{-\eta}\cdot\tvector{0}{t}}\d t\\
		&= \int_\R \hat f(t\cdot\xi^\bot)\d t.
	\end{align*}
	{}
\end{proof}

Using Lemma \ref{lem:integration along lines} we are now able to derive a formula for the Radon transform of curvelets.
\begin{theorem}
\label{thm:radon transform of curvelets}
	Let $\psi_{j,l,k}$ be a curvelet (cf. \eqref{eq:curvelets}) and denote $\theta(\omega)=(\cos\omega,\sin\omega)^\intercal$. Then,  
	\begin{equation}
	 	\Rc\psi_{j,l,k}(\theta(\omega),s) = 2^{j/4} V\paren{\frac{2^{\ceil{j/2}+1}}{\pi}(\omega+\theta_{j,l})} \sqrt{2\pi}\,\widehat{W}\paren{2^j \h{ b^{j,l}_k,\theta(\omega+\theta_{j,l}) }-2^js},
	\end{equation}
	where $b^{j,l}_k$ and $\theta_{j,l}$ are defined in Section \ref{ssec:curvelet frame}.
\end{theorem}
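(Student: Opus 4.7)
My plan is to compute $\Rc\psi_{j,l,k}(\theta(\omega),s)$ by passing to the Fourier domain through a slice identity that follows directly from Lemma~\ref{lem:integration along lines}, and then exploiting the explicit polar-coordinate definition \eqref{eq:generating curvelets} of $\hat\psi_{j,0,0}$. The slice identity I need is
\[
\Rc f(\theta(\omega),s)=\int_{\R}\hat f(t\,\theta(\omega))\,e^{ist}\,dt.
\]
To obtain it, I would parametrise the line $L(\theta(\omega),s)$ by $x=s\theta(\omega)+t\theta(\omega)^\perp$ and apply Lemma~\ref{lem:integration along lines} to the shifted function $g(x)=f(x+s\theta(\omega))$, whose Fourier transform equals $e^{is\,\theta(\omega)\cdot\xi}\hat f(\xi)$; this converts the spatial line integral into a frequency line integral along the orthogonal direction, weighted by $e^{ist}$ after the substitution $t\mapsto -t$ that handles $(\theta^\perp)^\perp=-\theta$.

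Next, since $\psi_{j,l,k}(x)=\psi_{j,0,0}(R_{\theta_{j,l}}(x-b^{j,l}_k))$, the facts that orthogonal rotations commute with the Fourier transform and that translations turn into modulations give
\[
\hat\psi_{j,l,k}(\xi)=e^{-ib^{j,l}_k\cdot\xi}\,\hat\psi_{j,0,0}(R_{\theta_{j,l}}\xi).
\]
With $\xi=t\,\theta(\omega)$, the identity $R_\eta\theta(\omega)=\theta(\omega\pm\eta)$ from Subsection~\ref{subsec:notation} lets me read off $\hat\psi_{j,0,0}(R_{\theta_{j,l}}t\theta(\omega))$ from \eqref{eq:generating curvelets} in polar form as $2^{-3j/4}W(2^{-j}t)\,V\!\paren{\tfrac{2^{\ceil{j/2}+1}}{\pi}(\omega\pm\theta_{j,l})}$; the compact support of $W$ in $(1/2,2)$ ensures this remains unambiguous where $t<0$.

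Substituting into the slice identity, the angular-window factor is independent of $t$ and pulls outside, leaving the scalar integral $\int_{\R}W(2^{-j}t)\,e^{it(s-b^{j,l}_k\cdot\theta(\omega))}\,dt$. The substitution $u=2^{-j}t$ yields a Jacobian $2^j$ (which combines with $2^{-3j/4}$ to produce the announced $2^{j/4}$ prefactor) and turns the integral into $\sqrt{2\pi}\,\check W\!\paren{2^j(s-b^{j,l}_k\cdot\theta(\omega))}=\sqrt{2\pi}\,\widehat W\!\paren{2^j b^{j,l}_k\cdot\theta(\omega)-2^j s}$ via $\check W(x)=\hat W(-x)$. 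The main obstacle is careful bookkeeping of sign conventions: tracking the direction built into the paper's $R_\eta$ so that the $\pm\theta_{j,l}$ in the $V$-argument comes out correctly, and rewriting $b^{j,l}_k\cdot\theta(\omega)$ as $\h{b^{j,l}_k,\theta(\omega\pm\theta_{j,l})}$ using $R_{\theta_{j,l}}^{-1}=R_{\theta_{j,l}}^T$ together with rotation-invariance of the Euclidean inner product — no deeper analytic difficulty arises.
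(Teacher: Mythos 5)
Your proposal follows essentially the same route as the paper's proof: reduce the line integral to a Fourier-slice integral $\int_\R e^{ist}\hat\psi_{j,l,k}(t\theta)\,\mathrm{d}t$ via Lemma~\ref{lem:integration along lines} applied to the translate, use $\hat\psi_{j,l,k}(\xi)=e^{-i\langle b^{j,l}_k,R_{\theta_{j,l}}\xi\rangle}\hat\psi_{j,0,0}(R_{\theta_{j,l}}\xi)$ together with the polar form \eqref{eq:generating curvelets}, and finish with the substitution $t\mapsto 2^{-j}t$ to produce the $2^{j/4}$ prefactor and $\sqrt{2\pi}\,\widehat{W}$. The argument is correct, and your explicit attention to the sign conventions and to the polar-coordinate ambiguity for $t<0$ is, if anything, more careful than the paper's own write-up.
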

\begin{proof}
	 Let $\theta:=\theta(\omega)=(\cos\omega,\sin\omega)^\intercal$, $\omega\in[-\Phi,\Phi]$, and $\tau_p f:=f(\bdot+p)$, $p\in\R^2$. First note that each curvelet $\psi_{j,l,k}$ is a Schwartz function since, per definition, its Fourier transform is $C^\infty$ and compactly supported.  Hence, we may apply Lemma \ref{lem:integration along lines}:
	\begin{equation}
	\label{aux:radon transform of curvelets1}
		\Rc_\Phi\psi_{j,l,k}(\theta,s) = \int_\R \psi_{j,l,k}(s\theta+t\theta^\bot)\d t = \int_\R (\tau_{s\theta}\psi_{j,l,k})(t\theta^\bot)\d t = \int_\R e^{i\h{s\theta,t\theta}} \hat \psi_{j,l,k}(t\theta)\d t.
	\end{equation}
	 
	 At scale $2^{-j}$, each curvelet $\psi_{j,l,k}$ is defined via translation and rotation of a generating curvelet $\psi_{j,0,0}$, cf. \eqref{eq:curvelets}. Using the relation of the Fourier transform and rotation as well as the relation $\widehat{\tau_p f}(\xi)=e^{i\h{p,\xi}}\hat f(\xi)$ we see that
	 \begin{equation}
	 \label{aux:radon transform of curvelets2}
		\hat\psi_{j,l,k}(\xi) = e^{-i\h{b^{j,l}_k,R_{\theta_{j,l}}\xi}}\hat\psi_{j,0,0}(R_{\theta_{j,l}}\xi).
	\end{equation}
	
	Now plugging \eqref{aux:radon transform of curvelets2} into  \eqref{aux:radon transform of curvelets1}, together with \eqref{eq:generating curvelets}, we deduce
	\begin{align*}
	\Rc_\Phi\psi_{j,l,k}(\theta,s) &= \int_\R e^{ist} e^{ -i \h{ b^{j,l}_k, R_{\theta_{j,l}}(t\theta) } } \hat \psi_{j,0,0}(R_{\theta_{j,l}}(t\theta))\d t,\\
	&=\int_\R e^{ist} e^{ -it \h{ b^{j,l}_k,\theta(\omega+\theta_{j,l}) } } \hat \psi_{j,0,0}(t,\omega+\theta_{j,l})\d t,\\
	&=2^{-3j/4} V\paren{\frac{2^{\ceil{j/2}+1}}{\pi}(\omega+\theta_{j,l})}\int_\R e^{ist} e^{ -it \h{ b^{j,l}_k,\theta(\omega+\theta_{j,l}) } }  W(2^{-j} t)\d t,\\
	&=2^{j/4} V\paren{\frac{2^{\ceil{j/2}+1}}{\pi}(\omega+\theta_{j,l})}\int_\R e^{ -ir [2^j \h{ b^{j,l}_k,\theta(\omega+\theta_{j,l}) } - s] }  W(r)\d r,\\
	&= 2^{j/4} V\paren{\frac{2^{\ceil{j/2}+1}}{\pi}(\omega+\theta_{j,l})} \sqrt{2\pi}\,\widehat{W}\paren{2^j \h{ b^{j,l}_k,\theta(\omega+\theta_{j,l}) }-2^js}.
\end{align*}

\end{proof}

The proof Theorem \ref{thm:null space} is now a simple consequence of Theorem \ref{thm:radon transform of curvelets}. 	
\begin{Proof}[of Theorem \ref{thm:null space}]
	Let $j\in\N$. To abbreviate the notation we let $a_j=\pi^{-1}2^{\ceil{j/2}+1}$ and denote by \[A_\Phi:=[-\pi,-\pi+\Phi]\cup[-\Phi,\Phi]\cup[\pi-\Phi,\pi]\] the symmetric (visible) angular range of the limited angle Radon transform $\Rc_\Phi$ (cf. Figure \ref{fig:symmetric angular range}). 
	
	According to Theorem \ref{thm:radon transform of curvelets}, we have to determine all $\theta_{j,l}$ such that $V(a_j(\bdot+\theta_{j,l}))|_{A_\Phi}\equiv 0$. Since $\supp V\subset (-1,1)$ and $\theta_{j,l}\in[-\pi,\pi]$, we have
	\begin{equation*}
		\theta_{j,l}\not\in (-a_j^{-1}-\omega,a_j^{-1}-\omega)\quad\Rightarrow\quad V(a_j(\omega+\theta_{j,l}))=0.
	\end{equation*}
	for all $\omega\in A_\Phi$. Therefore, by defining (cf. Figure \ref{fig:symmetric angular range})
	\begin{equation*}
	A_{\Phi,j}:= [-\pi,-\pi+(\Phi+a_j^{-1})]\cup[-(\Phi+a_j^{-1}),\Phi+a_j^{-1}] \cup[\pi-(\Phi+a_j^{-1}),\pi],
	\end{equation*}
	we see that $V(a_j(\bdot+\theta_{j,l}))|_{A_\Phi}\equiv 0$ holds whenever $\theta_{j,l}\not\in A_{\Phi,j}$. The assertion follows by defining the invisible index set of curvelet coefficients as
	\begin{equation*}
		\Ic_\Phi^\mathrm{invisible}=\sparen{(j,l,k)\in\Ic:\, \theta_{j,l}\not\in A_{\Phi,j}}.
	\end{equation*}

	\begin{figure}[h]
	\centering
		\includegraphics[width=11cm]{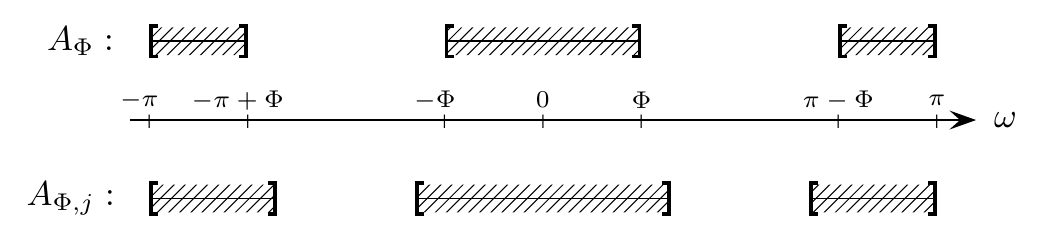}
		\caption{The symmetric (visible) angular range of the limited angle Radon transform, $A_\Phi$, and its scale-dependent version, $A_{\Phi,j}$. }
		\label{fig:symmetric angular range}
	\end{figure}	
\end{Proof}

We summarize the steps that were needed to prove Theorem \ref{thm:null space}. This procedure is also illustrated in Figure \ref{fig:idea of proof}.
To evaluate $\Rc_\Phi \psi_{j,l,k}(\xi,s)$ for a curvelet $\psi_{j,l,k}$, the integration was shifted to the Fourier domain according to Lemma \ref{lem:integration along lines}. This related the value $\Rc_\Phi \psi_{j,l,k}(\xi,s)$ to the integration of $\hat \psi_{j,l,k}$ along the line $L_\xi=\sparen{t\xi^\bot:\,t\in\R}$. Because of the limited angular range, the union of all such lines, 
\begin{equation}
\label{eq:angular wedge}
	W_\Phi=\bigcup_{\eta\in[-\Phi,\Phi]}L_{\xi(\eta)},
\end{equation}
covers not all of the $\R^2$. Above, we have again used the notation $\xi(\eta)=(\cos\eta,\sin\eta)$. To prove the assertion, we computed all those curvelet indices $(j,l,k)$ such that $\supp \hat\psi_{j,l,k}\cap W_\Phi =\emptyset$. 

\begin{figure}[h]
	\centering
		\includegraphics[width=11cm]{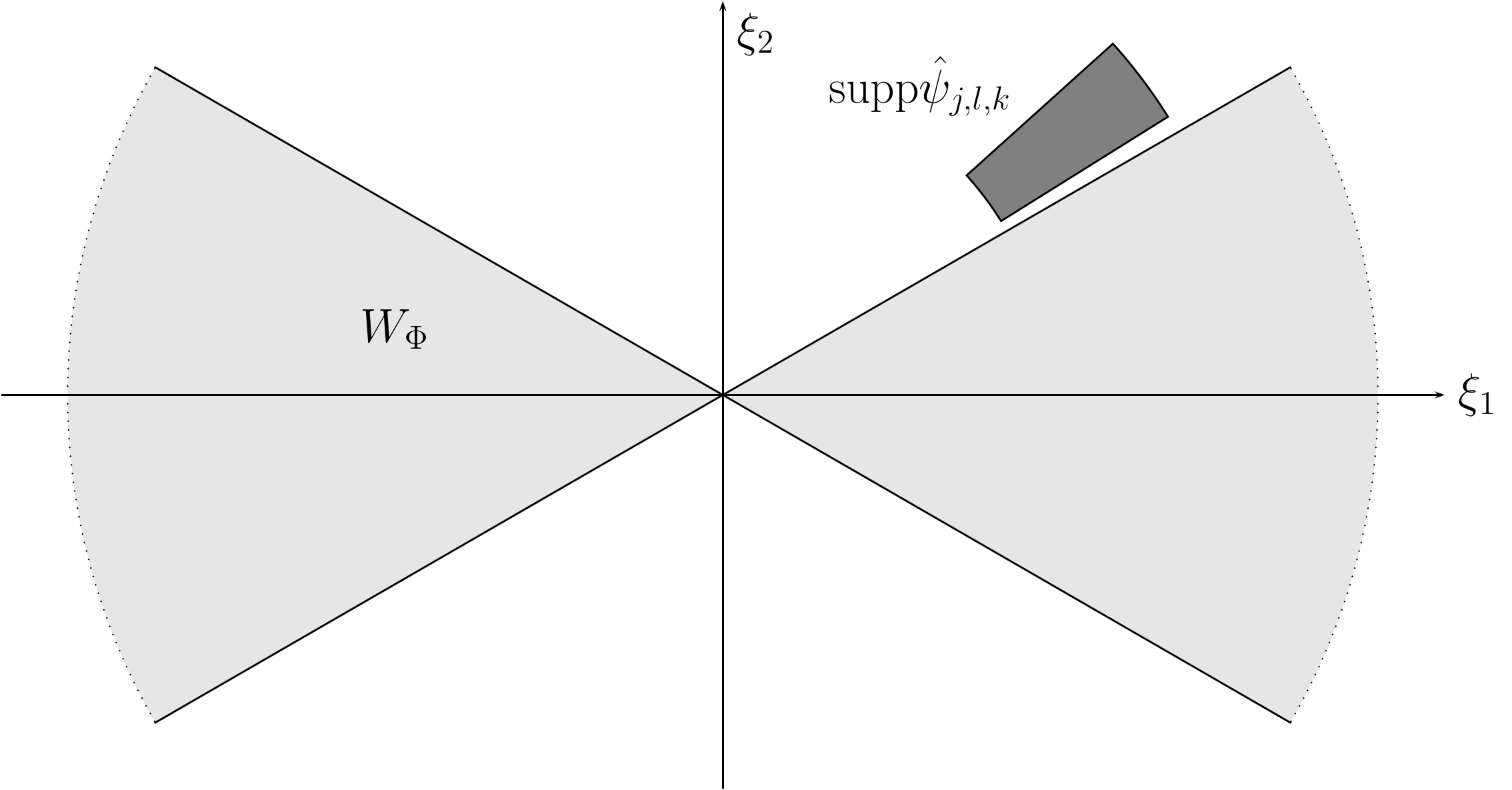}
		\caption{Theorem \ref{thm:null space} states that a curvelet $\psi_{j,l,k}$ lies in the kernel of the limited angle Radon transform whenever the the support of $\psi_{j,l,k}$ lies outside the \enquote{visibility region} $W_\Phi=\sparen{r(\cos\eta,\sin\eta):\, r\in\R, \eta\in[-\Phi,\Phi]}$. That is, $\Rc_\Phi\psi_{j,l,k}\equiv 0$ whenever $\supp \hat\psi_{j,l,k}\cap W_\Phi =\emptyset$.}
		\label{fig:idea of proof}
\end{figure}

Now, we turn the proof of Theorem \ref{thm:characterization of sparse regularized curvelet coeffs}. This will be a simple consequence of Theorem \ref{thm:null space} and the following lemma.

\begin{lemma}\label{lem:subdifferential of ell1 norm}
	Let $f:\ell^2(\Ic)\to [-\infty,\infty]$ be defined by $f(x)=\sum_{n\in\Ic}\phi(x_n)$, where $\phi:\R\to\R$ is a convex function such that $f$ is proper. Then, it holds that
	\begin{equation}
		y\in\partial f(x)\;\Leftrightarrow\;y_n\in\partial\phi(x_n)\text{ for all }n\in\Ic.
	\end{equation}
\end{lemma}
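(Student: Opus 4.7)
The plan is to apply the definition of the subdifferential directly and exploit the fact that $f$ is a separable sum. Recall that $y\in\partial f(x)$ means $f(z)\geq f(x)+\h{y,z-x}_{\ell^2}$ for all $z\in\ell^2(\Ic)$, and analogously $y_n\in\partial\phi(x_n)$ means $\phi(t)\geq\phi(x_n)+y_n(t-x_n)$ for all $t\in\R$. I would prove the two implications separately, using coordinate-wise test perturbations in one direction and summation of scalar inequalities in the other.

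For the implication ``$\Leftarrow$'', I would assume $y_n\in\partial\phi(x_n)$ for every $n\in\Ic$, so that
\begin{equation*}
\phi(z_n)\geq\phi(x_n)+y_n(z_n-x_n)\quad\text{for all }z_n\in\R.
\end{equation*}
Summing over $n\in\Ic$ yields $f(z)\geq f(x)+\h{y,z-x}_{\ell^2}$, which is exactly $y\in\partial f(x)$. The sum on the right-hand side converges absolutely by Cauchy--Schwarz since $y\in\ell^2(\Ic)$ and $z-x\in\ell^2(\Ic)$; the sums defining $f(x)$ and $f(z)$ may be infinite a priori, but since $f$ is proper we may restrict attention to points where these are finite (for $f(x)=+\infty$ the claim is vacuous).

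For the implication ``$\Rightarrow$'', the key idea is to localize the subdifferential inequality to a single coordinate via rank-one perturbations. Fix $n_0\in\Ic$ and $t\in\R$, and set $z:=x+t\,e_{n_0}$, where $e_{n_0}\in\ell^2(\Ic)$ is the standard basis element at $n_0$. Since all other coordinates of $z$ and $x$ agree, the summands indexed by $n\neq n_0$ cancel, giving $f(z)-f(x)=\phi(x_{n_0}+t)-\phi(x_{n_0})$ and $\h{y,z-x}_{\ell^2}=t\,y_{n_0}$. The defining inequality $y\in\partial f(x)$ then reads $\phi(x_{n_0}+t)\geq\phi(x_{n_0})+t\,y_{n_0}$ for every $t\in\R$, i.e. $y_{n_0}\in\partial\phi(x_{n_0})$. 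Since $n_0$ was arbitrary, the claim follows.

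The only delicate point is the convergence of the summed inequality in the ``$\Leftarrow$'' direction when one of $f(x)$ or $f(z)$ equals $+\infty$; this is handled by the usual convention $+\infty\geq a$ together with properness of $f$. No deeper obstacle arises, as the argument is essentially a coordinate-wise application of the scalar definition together with linearity of the $\ell^2$ inner product.
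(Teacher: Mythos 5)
Your proposal is correct and follows essentially the same route as the paper's own proof: summing the scalar subgradient inequalities for the ``$\Leftarrow$'' direction, and testing with coordinate perturbations $z = x + t\,e_{n_0}$ to localize the inequality for the ``$\Rightarrow$'' direction. Your additional remarks on absolute convergence of $\h{y,z-x}$ via Cauchy--Schwarz and on the case $f(x)=+\infty$ only make explicit what the paper leaves implicit.
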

\begin{proof}
	First note that, since $f$ is proper, for $x\in\ell^2(\Ic)$ we have $\partial f(x)=\emptyset$ if $f(x)=\infty$. In what follows we therefore assume without loss of generality that $f(x)<\infty$.
	
	Suppose $y_n\in\partial \phi(x_n)$ for all $n\in\Ic$. Then, by definition of the subgradient we have
	\begin{equation*}
		\forall z_n\in\R:\; \phi(z_n)\geq \phi(x_n)+y_n(z_n-x_n).
	\end{equation*}
	Summing over $n$ implies
	\begin{equation*}
		\forall z\in\ell^2(\Ic):\; \sum_{n\in\Ic}\phi(z_n)\geq \sum_{n\in\Ic}\paren{\phi(x_n)+y_n(z_n-x_n)}=\sum_{n\in\Ic}\phi(x_n)+\h{y,z-x},
	\end{equation*}
	which is by definition of $f$ equivalent to $y\in\partial f(x)$. This proves the implication \enquote{$\Leftarrow$} of the statement.
	
	On the other hand, if $y\in\partial f(x)$, then $f(z)\geq f(x) + \h{y,z-x}$ for all $z\in\ell^2(\Ic)$. In particular, this holds for all $z=x+h e_n$ with $h\in\R$ and $n\in\Ic$, where $e_n=(\delta_{i,n})_{i\in\Ic}$ and $\delta_{i,n}$ denotes the Kronecker delta. Therefore we have
	\begin{align*}
		\forall h\in\R\,\forall n\in\Ic:\; &f(x+he_n)\geq f(x)+\h{y,h e_n}\\
		&\Leftrightarrow\; \sum_{i\in\Ic}\phi(x_i + \delta_{i,n}h) - \sum_{i\in\Ic}\phi(x_i) \geq y_n h\\
		&\Leftrightarrow\; \phi(x_n + h) - \phi(x_n) \geq y_n h\\
		&\Leftrightarrow\; y_n\in\partial\phi(x_n).
	\end{align*}
	{}
\end{proof}

\begin{Proof}[of Theorem \ref{thm:characterization of sparse regularized curvelet coeffs}]
	As in the proof of Proposition \ref{prop:sparse regularization solution} we see that $\hat c$ fulfills the following relation
	\begin{equation}
	\label{proof:aux1}
		-K^\ast(K\hat c - y^\delta)\in\partial\norm{\hat c}_{1,w}.
	\end{equation}
	
	Since $y^\delta\in\ran\Rc_\Phi$ we have that $y^\delta = Kc^\delta = \Rc_\Phi T^\ast c^\delta$ for some curvelet coefficient vector $c^\delta\in\R^N$. Thus,
	\begin{equation*}
		\hat x:=-K^\ast(K\hat c - y^\delta) = -K^\ast K (\hat c - c^\delta) = -K^\ast\paren{\sum_{j,l,k}(\hat c - c^\delta)_{j,l,k}\Rc_\Phi\psi_{j,l,k}}.
	\end{equation*}
	By Theorem \ref{thm:null space}, it holds that
	\begin{equation}
	\label{proof:aux2}
		\hat x_{j,l,k}=0 \text{ for all }(j,k,l)\in\Ic_\Phi^{\mathrm{invisible}}.
	\end{equation}
		
	Now let $(j,l,k)\in\Ic_\Phi^{\mathrm{invisible}}$. In view of \eqref{proof:aux1} and \eqref{proof:aux2}, Lemma \ref{lem:subdifferential of ell1 norm} implies that $0\in\partial\paren{w_{j,l,k}\abs{\hat c_{j,l,k}}}$. However, this means that $\hat c_{j,l,k}$ minimizes the function $w_{j,l,k}\abs{\,\cdot\,}$ and since $w_{j,l,k}>0$ we get that $\hat c_{j,l,k}=0$. {}
\end{Proof}

\section{Adapted curvelet sparse regularization}
\label{sec:adapted csr}
In this section we are going to apply the results from Section \ref{sec:characterizations} to a finite dimensional reconstruction problem. We will show that, in this setting, a significant dimensionality reduction can be performed. Based on this approach, we will formulate the adapted curvelet sparse regularization (A-CSR). 

\subsection{Discrete reconstruction problem}
We consider the discrete version of the reconstruction problem. To this end, we model $f$ as a finite linear combination of curvelets, i.e., $f=\sum_{n=1}^N c_n\psi_n$, where $n=n(j,l,k)$ is an enumeration of the curvelet index set $\Ic$ (cf. Subsection \ref{ssec:curvelet frame}). Moreover, we assume to be given a finite number of measurements $y_{m}=\Rc_\Phi(\theta_m,s_m)$, $1\leq m\leq M\in\N$.  Then, each measurement $y_m$ can be expressed as
\begin{equation}
\label{eq:discrete measurements}
 	y_m=\Rc_\Phi f(\theta_m,s_m)=\sum_{n=1}^N c_n\Rc_\Phi\psi_n(\theta_m,s_m).
\end{equation}
Now, let us define the so-called \emph{system matrix} $K$ by $K_{m,n}=\Rc_\Phi\psi_n(\theta_m,s_m)$ for $1\leq m\leq M$ and $n\in\Ic$. Then, the discrete version of the limited angle problem \eqref{eq:noisy tomography problem in curvelet domain} reads
\begin{equation}
\label{eq:discrete problem}
 	y = K c + \eta.
\end{equation}
Note the abuse of notation. In contrast to \eqref{eq:noisy tomography problem in curvelet domain}, where $K=\Rc_\Phi T^\ast$ denotes a continuous operator, here, $K$ is its discrete version. We want to point out, that the reconstruction problem \eqref{eq:discrete problem} is formulated in terms of all curvelet coefficients $c=T f$. That is, to solve \eqref{eq:discrete problem}, we need to compute $c_{n(j,l,k)}$ for all possible curvelet indices $(j,l,k)\in\Ic$. The dimension of the reconstruction problem, given by $N=\abs{\Ic}$, does not depend on the available angular range. In what follows, we will use the method curvelet sparse regularization to solve this problem.

\subsection{Dimensionality reduction \& Adapted curvelet sparse regularization (A-CSR)}
First, note that the results from Section \ref{sec:characterizations} are formulated only in terms of the angular range parameter $\Phi$. In turn, this parameter is completely determined by the acquisition geometry. Hence, it is known prior to the reconstruction and can be extracted from the given data by 
\begin{equation*}
	\Phi=\min\sparen{\phi:\exists \phi_0\in[-\pi,\pi]: \forall\,\,1\leq m\leq M:\, \theta_m\in[\phi_0-\phi,\phi_0+\phi]}. 
\end{equation*}
Knowing $\Phi$, we can use Theorem \ref{thm:characterization of sparse regularized curvelet coeffs} to identify those curvelets which lie in the kernel of the limited angle Radon transform $\Rc_\Phi$. Their index set can be precomputed according to \eqref{eq:invisible curvelet indices} or, equivalently, by
\begin{equation*}
	\Ic_\Phi^{\mathrm{invisible}}=\sparen{(j,l,k)\in\Ic: (\cos\theta_{j,l},\sin\theta_{j,l})^T\not\in W_{\Phi,j} },
\end{equation*}
where $W_{\Phi,j} = \sparen{\xi\in\R^2:\,\xi=r(\cos\omega,\sin\omega)^T,\,r\in\R,\abs{\omega}<\Phi+\pi 2^{-\ceil{j/2}-1}}$ is a polar wedge at scale $2^{-j}$ and $\theta_{j,l}$ is the orientation of the curvelet $\psi_{j,l,k}$. In what follows, curvelets $\psi_{j,l,k}$ as well as curvelet coefficients $c_{n(j,l,k)}$ with $(j,l,k)\in\Ic_\Phi^{\mathrm{invisible}}$ will be called \emph{invisible}\footnote{We adapted the term \emph{invisible} from \cite{Quinto06}.} from the given angular range. Accordingly, the \emph{index set of visible curvelet coefficients} is defined by
\begin{equation*}
	\Ic_\Phi^\mathrm{visible}=\Ic\setminus\Ic_\Phi^\mathrm{invisible}.
\end{equation*}

In view of Theorem \ref{thm:null space}, it holds that $K_{m,n}=\Rc_\Phi\psi_n(\theta_m,s_m)\equiv 0$ for $n\in\Ic_\Phi^{\mathrm{invisible}}$ and for all $1\leq m\leq M$, i.e., those columns of the system matrix $K$ which correspond to the invisible index set are identified to be actually zero. Therefore, we may define a new system matrix $K_\Phi$ with respect to the visible index set by \[(K_\Phi)_{m,n}=\Rc_\Phi \psi_n(\theta_m,s_m), \quad 1\leq m\leq M,\; n\in\Ic_\Phi^\mathrm{visible}.\] 
Such a \emph{reduced system matrix} $K_\Phi$ has the size $M\times \abs{\Ic_\Phi^\mathrm{visible}}$. Since $\abs{\Ic_\Phi^\mathrm{visible}}=\abs{\Ic}-\abs{\Ic_\Phi^\mathrm{invisible}}$, the number of columns is reduced by the quantity $\abs{\Ic_\Phi^\mathrm{invisible}}$. Using the reduced system matrix we formulate the \emph{adapted (or reduced) limited angle problem} as 
\begin{equation}
\label{eq:adapted problem}
	y^\delta = K_\Phi c + \eta.
\end{equation}

The dimension of the adapted problem, given by $N_\Phi=\abs{\Ic}-\abs{\Ic_\Phi^\mathrm{invisible}}$, now depends on the angular range parameter $\Phi$. From the definition of $\Ic_\Phi^\mathrm{visible}$ it is clear that as the angular range becomes larger the number of visible curvelets increases. Hence, the dimension of the adapted problem $N_\Phi$ increases as the angular range increases and vice versa. 

Apply the technique of curvelet sparse regularization (cf. Subsection \ref{ssec:sparse regularization}) to the reduced problem \eqref{eq:adapted problem} we formulate the \emph{adapted curvelet sparse regularization} (A-CSR) as
\begin{equation}
\tag{A-CSR}
\label{eq:adapted csr}
	\hat c_\Phi=\argmin_{c\in\R^{N_\Phi}}\sparen{\frac 1{2}\norm{K_\Phi c-y^\delta}^2_2+\norm{c}_{1,w}}.
\end{equation}

Apparently, the computational amount decreases by using the A-CSR framework instead of the CSR. However, according to Theorem \ref{thm:characterization of sparse regularized curvelet coeffs} the reconstruction quality is preserved. In Section \ref{sec:results}, we will present some practical experiments concerning these issues.

\begin{remark}
	Note that the characterization of Theorem \ref{thm:characterization of sparse regularized curvelet coeffs} may also be applied to the closed form formula \eqref{eq:sparse minimizer via BCD} by replacing the index of summation $\Ic$ by $\Ic_\Phi^\mathrm{visible}$. This yields a closed form solution for \eqref{eq:adapted csr}.
\end{remark}

\section{Discussion}
\label{sec:discussion}
This section is devoted to the discussion of the results which were presented in the previous section as well as their implications for the practical application of the curvelet sparse regularization.

\begin{description}
\item[General angular ranges.] So far, we have worked with a symmetric angular range $[-\Phi,\Phi]$ with $0<\Phi<\pi/2$ which was centered at $\Phi_0=0$. The results of Section \ref{sec:characterizations}, however, can be easily adapted to a more general situation, where the available angular range $[\Phi_0-\Phi,\Phi_0+\Phi]$ is centered around an angle $\Phi_0\in[-\pi,\pi]$. To this end, let $T_{\Phi_0}$ be the translation operator defined by $T_{\Phi_0}\Rc_\Phi f(\theta,s)=\Rc_\Phi f(\theta+\Phi_0,s)$. Then, the limited angle Radon transform with respect to a general angular range $[\Phi_0-\Phi,\Phi_0+\Phi]\times \R$ is given by $T_{\Phi_0}\Rc_\Phi$. Theorem \ref{thm:null space} and Theorem \ref{thm:characterization of sparse regularized curvelet coeffs} can be now applied to $T_{\Phi_0}\Rc_\Phi$, yielding a general index set of invisible curvelet coefficients
\begin{equation*}	
	\Ic_\Phi^{\mathrm{invisible}}=\sparen{(j,l,k)\in\Ic:\,\supp\hat\psi_{j,l,k}\cap R_{\Phi_0}W_\Phi=\emptyset},
\end{equation*}
where  $R_{\Phi_0}W_\Phi=\sparen{R_{\Phi_0}\xi:\; \xi\in W_\Phi}$ is a rotated version of $W_\Phi$.

\item[Computation of the system matrix.]
In Theorem \ref{thm:radon transform of curvelets} we have derived an expression for the Radon transform of a curvelet $\psi_{j,l,k}$. This expression can be used to compute the entries of the system matrix $\Phi$ analytically, if both, the angular window $V$ and the Fourier transform of the radial window $\widehat W$ are known analytically. This is useful for practical application since, in this case, the system matrix can be precomputed and needs not to be set up in every iteration of the minimization of the $\ell^1$-penalized Tikhonov functional. This may yield an additional speedup of the algorithm.

\item[Additional stabilization of the limited angle problem.]
Adapting the problem to the limited angular range has an additionally stabilizing effect. This comes from the fact that the reconstruction problem \eqref{eq:adapted problem} is formulated with respect to visible curvelet coefficients only. In this way, a big portion of the null space of the system matrix (limited angle Radon transform) is excluded from the formulation of the limited angle problem. Therefore, the condition number of the reduced system matrix improves which induces an additional stability.

We want to point out that the formulation of the adapted problem \eqref{eq:adapted problem} does only depend on null space analysis of the limited angle Radon transform in terms of curvelets (cf. Theorem \ref{thm:null space}). Thus, the adapted limited angle problem \eqref{eq:adapted problem} is not related to any reconstruction algorithm. Therefore, the additional stabilization will be present if any other method would be used for solving \eqref{eq:adapted problem}. The adapted formulation of the reconstruction problem \eqref{eq:adapted problem} can be therefore interpreted as preconditioning procedure.

\item[Related work.] In \cite{Frikel2010}, the adapted curvelet sparse regularization was introduced by using microlocal analysis. There, a qualitative characterization of visible curvelets was derived from the characterization of visible singularities of E. T. Quinto \cite{Quinto93} and the \enquote{the resolution of wavefront set property} of the continuous curvelet transform \cite{Candes_CCT_Wavefront_Set05}. These results were stated there without proofs. 

\end{description}

\section{Numerical experiments}
\label{sec:results}
This section is devoted to the illustration of our results which were presented in the previous sections. To this end, two types of numerical experiments were made. In the first part of our experiments we will illustrate the visibility of curvelets under the limited angle Radon transform and show how this leads to a dimensionality reduction in the limited angle reconstruction problem. In particular, these experiments are meant to illustrate Theorem \ref{thm:null space} and Theorem \ref{thm:characterization of sparse regularized curvelet coeffs}. The second part of our experiments is devoted to reconstructions via CSR, A-CSR and filtered backprojection (FBP). A comparison of these reconstructions will be presented in terms of execution times and reconstruction quality.

\subsection{Implementation of the minimization algorithm}
For the minimization of the $\ell^1$-penalized Tikhonov functional \eqref{eq:sparse regularization} we implemented a variant of the well known \emph{iterative soft-thresholding} algorithm \cite{Daubechies_Iterative_Thresholding_Algorithm_for_Inverse_Problems04,BrediesLorenz2008}. This algorithm is given as a fixed point iteration of the equation \eqref{eq:characterization of sparse solution}, namely 
\begin{equation}\label{eq:curvelet thresholding}
	c^{n+1}=\mathcal{S}_{\tau^n}\paren{c^n-s_nK^\ast(Kc^n-y^\delta)},
\end{equation}
where we have used $c^0\equiv 0$ as an initial guess. This procedure consists of a gradient descent step with a subsequent soft-thresholding with respect to the sequence $\tau^n=(\tau^n_{(j,l,k)})_{(j,l,k)\in\mathcal{I}}$.  The step length $s_n$ of the gradient descent step was chosen such that $0<\underline{s}\leq s_n\leq \bar s<2/\norm{K}^2$, \cite{BrediesLorenz2008}. Usually, the thresholding sequence $\tau^n$ is chosen as $\tau^n=s_n\odot w$, where $\odot$ denotes pointwise multiplication of the step length $s_n$ and the $\ell^1$-norm weight sequence $w$ (cf. \eqref{eq:sparse regularization}). This weight sequence is a free parameter and has to be selected appropriately because it affects the reconstruction quality. In general, there is no rule how to select such a weight sequence. In practice, this often done by trial and error.

In our implementation we got rid of this weight sequence by choosing the thresholding sequence $\tau_n$ adaptively and scale-dependent at each iteration $n$ via 
\begin{equation}
	\tau^n_{(j,l,k)} = 2^{3(j-J)/4}\sigma\sqrt{2\log_e N_{j,l}},
\end{equation}
where $\sigma$ denotes the standard deviation of the noise $\eta$, $N_{j,l}$ is the number of curvelet coefficients at scale $2^{-j}$ and at orientation $\theta_{j,l}$ and $J\in\N$ is the largest available scale parameter for the image size of interest. This thresholding strategy was initially presented in \cite[Sec. 6]{Candes_Recovering_Edges_in_Illposed_problems02}. Since it is based on the white noise model, we assumed throughout our experiments the noise to be white Gaussian. 

Moreover, we simulated a practical situation by assuming that the noise, and especially its standard deviation $\sigma$, is not known. In order to automatize the reconstruction procedure, we used the median absolute value (MAD) to estimate $\sigma$ (cf. \cite[p. 565]{Mallat_WaveletTour_SparseWay08}) by
\[\sigma \approx 1.4826\cdot \mathrm{MAD}(c^n_J).\] 
Above, $\mathrm{MAD}(c^n_J)$ is the median of the absolute values of the curvelet coefficient at the finest scale $2^{-J}$. A summarized description of our reconstruction procedure is given in the Algorithm \ref{alg1}.

\begin{algorithm}
\caption{Reconstruction algorithm}
\label{alg1}
\begin{algorithmic}[1]
	\REQUIRE $0<\underline{s}\leq s_n\leq \bar s<2/\norm{K}^2$;
	\STATE $J \leftarrow$ largest scale parameter of the curvelet decomposition;
	\STATE maxIter $\leftarrow$ maximum number of iterations;
	\STATE $c \leftarrow 0$;
	\STATE iter $\leftarrow$ 0;\\[1ex]
	\WHILE{(iter $\leq$ maxIter)}
		
		\STATE $\sigma\leftarrow 1.4826\cdot \mathrm{MAD}(c_J)$;\\[1ex]
		\FOR{ each $(j,l,k)\in\Ic$}
		\STATE $N_{j,l}$ = number of curvelet coefficients at scale $2^{-j}$ and orientation $\theta_{j,l}$;\\[1ex]
		\STATE $\tau^n_{(j,l,k)} \leftarrow 2^{3(j-J)/4}\sigma\sqrt{2\log_e N_{j,l}}$;\\[1ex]
		\ENDFOR\\[1ex]
		\STATE $c \leftarrow \mathcal{S}_{\tau^n}\paren{c-sK^\ast(Kc-y^\delta)}$;\\[1ex]
		\STATE iter  $\leftarrow$ iter + 1;
	\ENDWHILE
\end{algorithmic}
\end{algorithm}

In the following we will use this algorithm to compute CSR reconstructions as well as A-CSR reconstructions, i.e., in the formulation of Algorithms \ref{alg1}, $K$ may be the full or reduced system matrix.

Eventually, we would like to note that the reconstruction Algorithm \ref{alg1} is completely free of any parameter. The weight sequence $w$ appearing in the $\ell^1$-penalized Tikhonov functional \eqref{eq:sparse regularization} or \eqref{eq:adapted csr}, respectively, is set adaptively during the iteration.

\subsection{Visibility of curvelets \& Dimensionality reduction}
In our first experiment, we are going illustrate the visibility of curvelets under the limited angle Radon transform $\Rc_\Phi$ (cf. Theorem \ref{thm:null space}) for different values of $\Phi$. To this end we consider the  function 
\[f=\psi_1+\psi_2+\psi_3+\psi_4,\]
which is a linear combination of curvelets $\psi_i$, $i\in\sparen{1,2,3,4}$, at a fixed scale $2^{-4}$ and orientations $\theta_1=0^\circ$, $\theta_2=20^\circ$, $\theta_3=60^\circ$ and $\theta_4=90^\circ$, see Figure \ref{fig:linear combination of curvelets}. We computed the limited angle Radon transform $\Rc_\Phi f = \Rc_\Phi \psi_1+\Rc_\Phi \psi_2+\Rc_\Phi \psi_3+\Rc_\Phi \psi_4$ and its inverse using the angular range parameters $\Phi=35^\circ$ and $\Phi=80^\circ$. The results of this experiment are shown in Figure \ref{fig:radon transform of curvelets}. The first column shows the limited angle Radon transforms $\Rc_\Phi f$ of $f$ for different values of $\Phi$, whereas the second column shows the inverse Radon transforms $\Rc_\Phi^{-1}\Rc_\Phi f$ from the corresponding limited angle data. In the first row we see that only those curvelets are visible in the reconstruction which correspond to $\theta_1=0^\circ$ and $\theta_2=20^\circ$, i.e., 
\[\Rc_{35^\circ}^{-1}\Rc_{35^\circ} f = \psi_1+\psi_2.\] 
In the second row, we see that another curvelet $\psi_3$ (corresponding to $\theta_3=60^\circ$) becomes visible by enlarging the angular range, i.e., 
\[\Rc_{80^\circ}^{-1}\Rc_{80^\circ} f = \psi_1+\psi_2+\psi_3.\] 
To explain this effect we computed the set of invisible curvelet coefficients according to \eqref{eq:invisible curvelet indices}. As a result, we see that \[\Ic_{35^\circ}^\mathrm{invisible}=\sparen{3,4} \text{ and } \Ic_{80^\circ}^\mathrm{invisible}=\sparen{4}.\]
As a rule of thumb, we can conclude that curvelets having orientations within the available angular range $[-\Phi,\Phi]$ are visible for the angular range Radon transform. However, curvelets which correspond to missing directions are not visible for the limited angle Radon transform

\begin{figure}[H]
\centering
	\includegraphics[height=4.5cm]{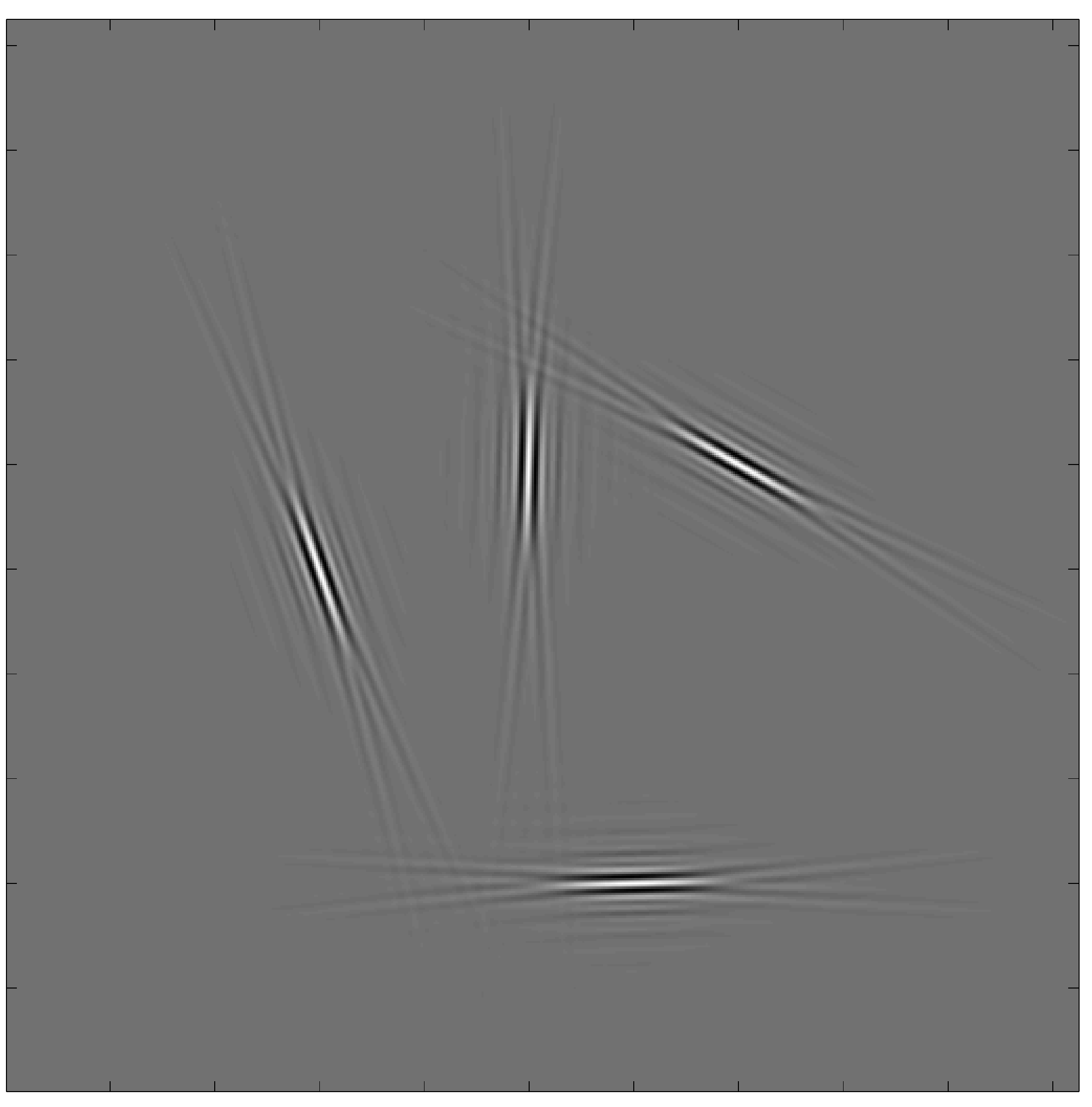}
	\caption{A Matlab generated image of a function which is given as a linear combination of curvelets at scale $2^{-4}$ with orientations $\theta\in\sparen{0^\circ,20^\circ,60^\circ,90^\circ}$.}
	\label{fig:linear combination of curvelets}
\end{figure}

\begin{figure}[H]
\centering
\[
\begin{array}{lcc}
	\toprule
	&\text{Limited angle Radon transform}  & \text{Inverse Radon transform}\\
	\toprule\\
	\Phi=35^\circ&\includegraphics[height=4.5cm]{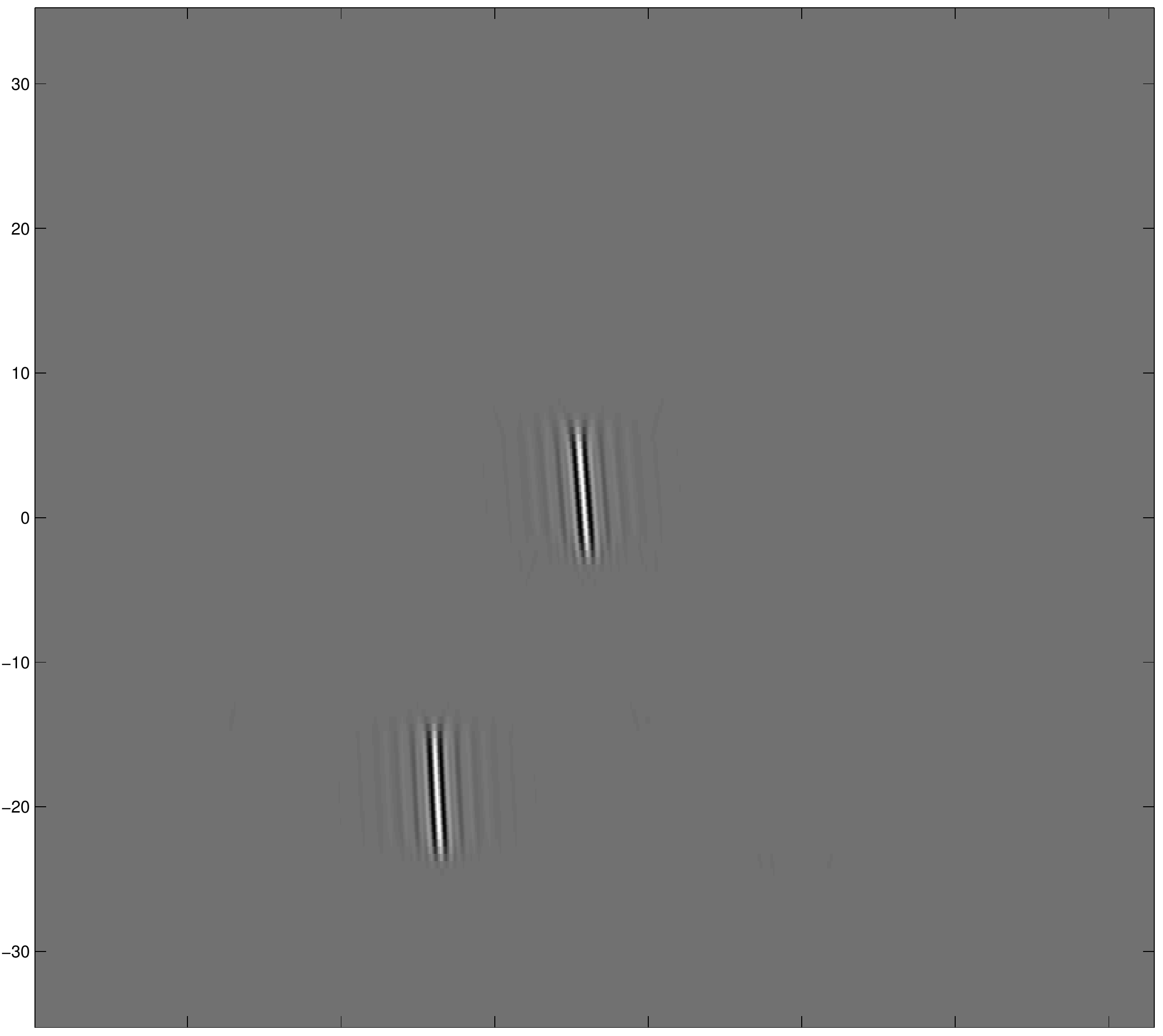}\quad
	&\includegraphics[height=4.5cm]{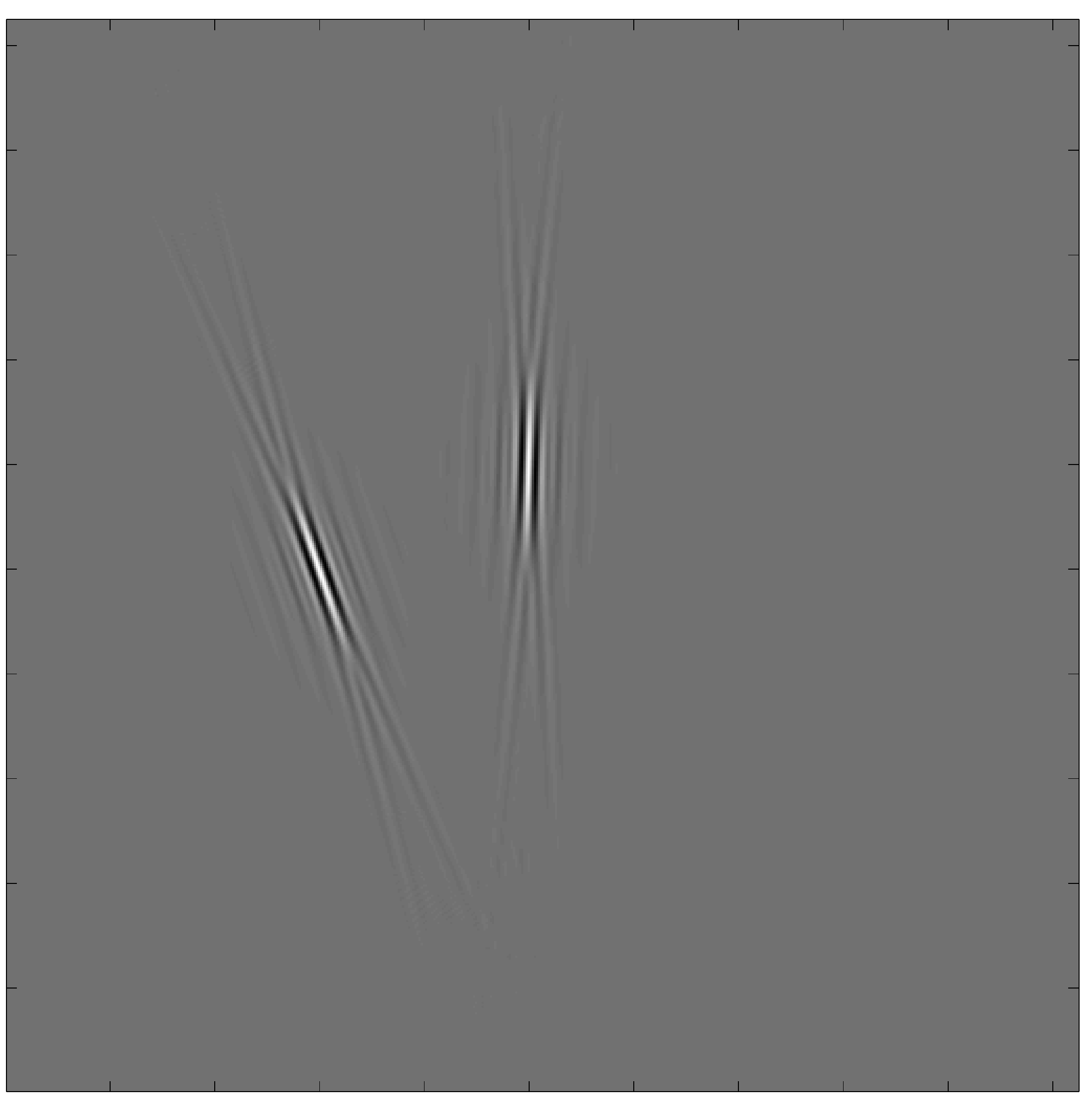}\\[2ex] 
	\midrule\\

	\Phi=80^\circ & \includegraphics[height=4.5cm]{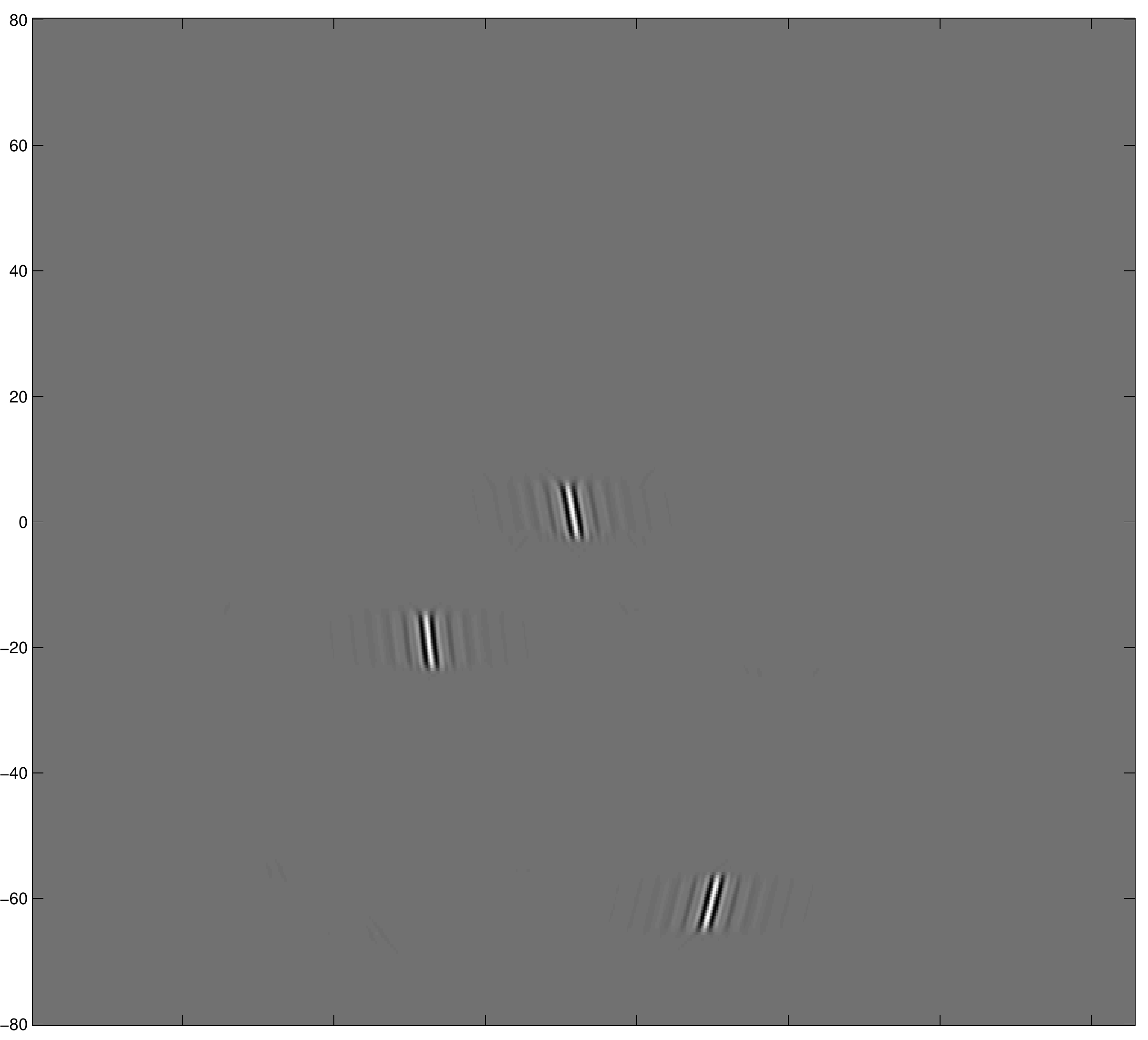}\quad
	& \includegraphics[height=4.5cm]{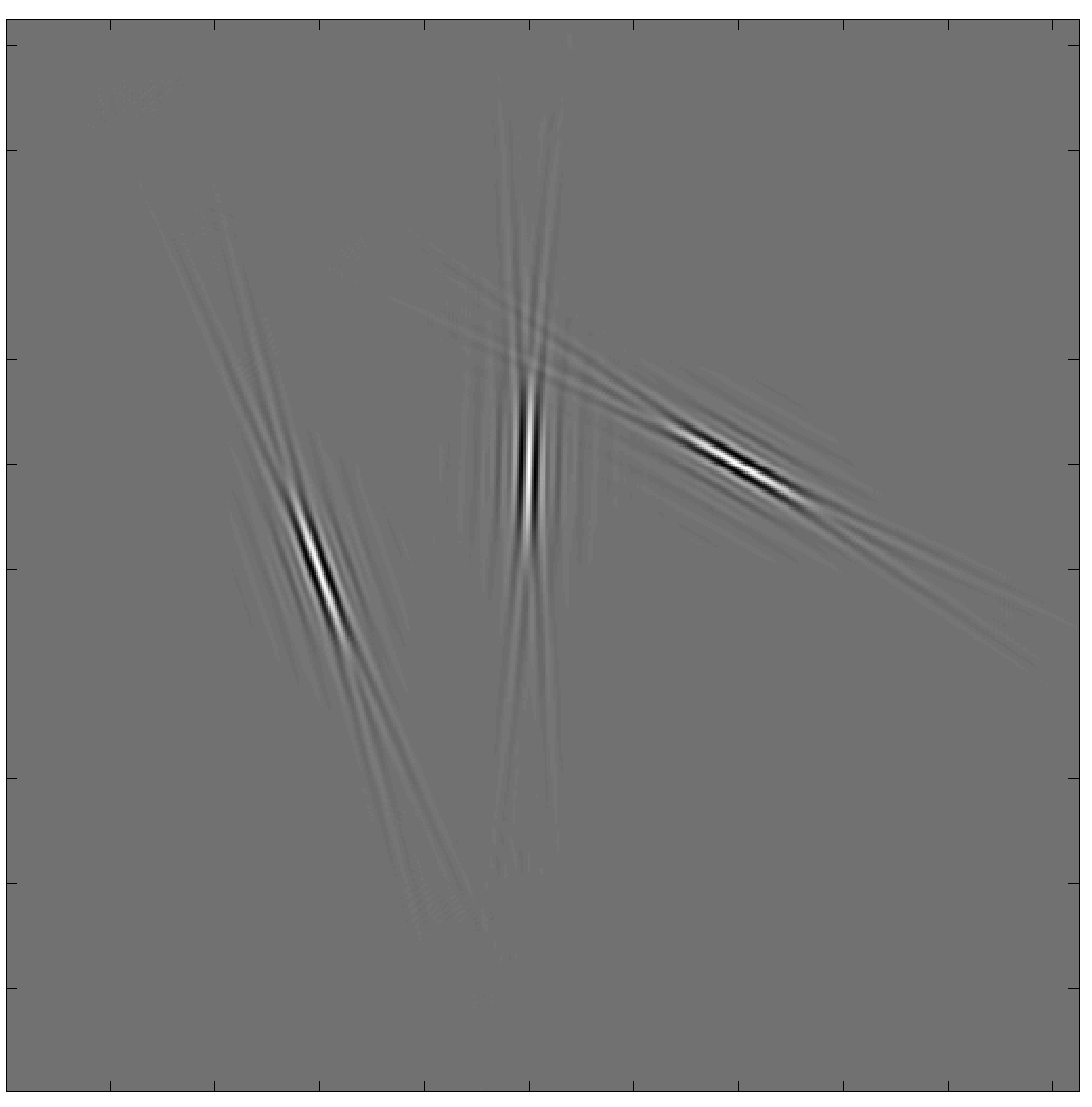}\\[2ex] 
	\bottomrule
\end{array}
\]

\caption{The first column shows the limited angle Radon transform (sinogram) of the image given in Figure \ref{fig:linear combination of curvelets} at an angular range $[-\Phi,\Phi]$ for $\Phi=35^\circ$ and $\Phi=80^\circ$. The second column shows the corresponding inverse Radon transform, i.e., $\Rc_\Phi^{-1}\Rc_\Phi f$. This figure illustrates the Theorem \ref{thm:null space}. We can observe that the curvelet with the orientation $\theta=90^\circ$ is invisible in both cases. Whereas, the curvelet with orientation $\theta=60^\circ$ is invisible for $\Phi=35^\circ$ (first row), i.e., lies in the kernel of the limited angle Radon transform, but visible for $\Phi=80^\circ$ (second row).}
\label{fig:radon transform of curvelets}
\end{figure}

Now it is obvious that if an arbitrary function $f$ is represented in terms of curvelet coefficients, we can seperate the visible and invisible parts of this function by \[f=\sum_{n\in\Ic_{\Phi}^\mathrm{visible}}c_n\psi_n + \sum_{n\in\Ic_{\Phi}^\mathrm{invisible}}c_n\psi_n.\]
This separation depends only on the parameter $\Phi$. The adapted dimension of the limited angle problem is then given by number of visible curvelets $\abs{\Ic_{\Phi}^\mathrm{visible}}$. In the next experiment we computed the full and the reduced dimension for an image $f$ of size $256\times 256$ using CurveLab version 2.1.2, \cite{CurveLab}. The results of this experiment are plotted in Figure \ref{fig:dimension}. The dimension of the non-adapted problem in the curvelet domain is constant for all angular ranges. However, the dimension of the adapted problem shows a strong dependence on the available angular range. We can observe a significant dimensionality reduction for any angular parameter satisfying $\Phi\leq150^\circ$. 

Moreover, we can observe a piecewise constant behavior of the reduced dimension. The dimension increases stepwise linearly as the angular range increases. The reason for this stepwise structure lies in the fact that curvelets remain visible as long as $\supp \hat\psi_{j,l,k}\cap W_\Phi\neq\emptyset$, see also Figure \ref{fig:idea of proof}. The length of one such step therefore corresponds to the length of the of the support of the angular window $V$ of curvelets at the finest scale $2^{-J}$, i.e., to $\abs{\supp V\paren{\frac{2^{\ceil{J/2}+1}}{\pi}\;\bdot}}$.

\begin{figure}[H]
\centering
\begin{tikzpicture}
	\begin{axis}[
		height=5cm, 
		width=13cm,
		xlabel={Angular range $\Theta$},
		xtick={0,10,20,30,40,50,60,70,80,90,100,110,120,130,140,150,160,170,180},
		xticklabels={$0^\circ$,,$20^\circ$,,$40^\circ$,,$60^\circ$,,$80^\circ$,,$100^\circ$,,$120^\circ$,,$140^\circ$,,$160^\circ$,,$180^\circ$},
		xmin = 0,
		xmax= 190,
		axis x line = bottom,
		axis y line = left,
		ylabel={dimension in $10^5$},
		ytick={100000,200000,300000},
		yticklabels={1,2,3},
		ymin=0,
		ymax=330000,
		scaled ticks=false,
		tick label style={font=\small},
		label style={font=\small},
		legend style={font=\small},
		cycle list name=black white,
		legend style={at={(0.98,0.07)},anchor=south east},
		]
		\addplot [thick, dashed] file {dim_full_256x256.dat};
		\addlegendentry{full dim}
		\label{plot:full dim}
		
		\addplot [thick] file {dim_reduced_256x256.dat};
		\addlegendentry{reduced dim}
		\label{plot:reduced dim}		
	\end{axis}%
\end{tikzpicture}%

\caption{Dimension of the full problem \eqref{eq:discrete problem}, \ref{plot:full dim}, and of the adapted problem \eqref{eq:adapted problem}, \ref{plot:reduced dim}, for an image of size $256\times 256$. The plot shows the dependence of the full and reduced dimension on the available angular range $[0,\Theta]$. Since the full problem is formulated in terms of all curvelet coefficients, its dimension is given by the number of all curvelet coefficients $\abs{\Ic}$. The adapted problem, however, is formulated only in terms of visible curvelet coefficients. Hence, the reduced dimension, given by $\abs{\Ic_\Phi^\mathrm{visible}}$, depends strongly on the available angular range.}
\label{fig:dimension}
\end{figure}
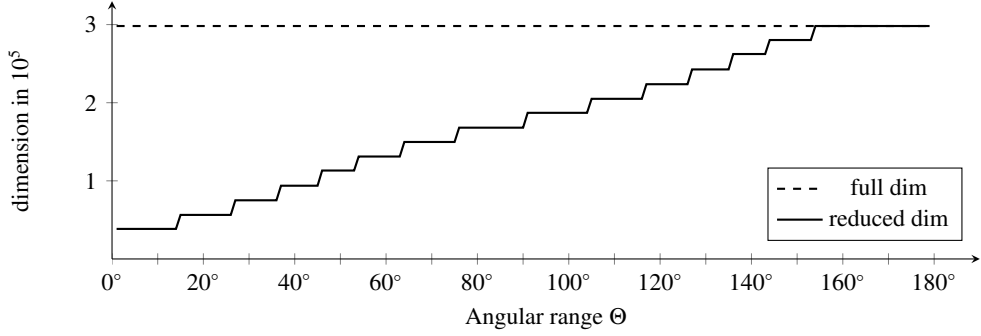

\subsection{CSR vs. A-CSR: Execution times \& reconstruction quality}

\begin{figure}[H]
\centering
	\subfloat[Shepp-Logan head phantom]{\includegraphics[height=4.5cm]{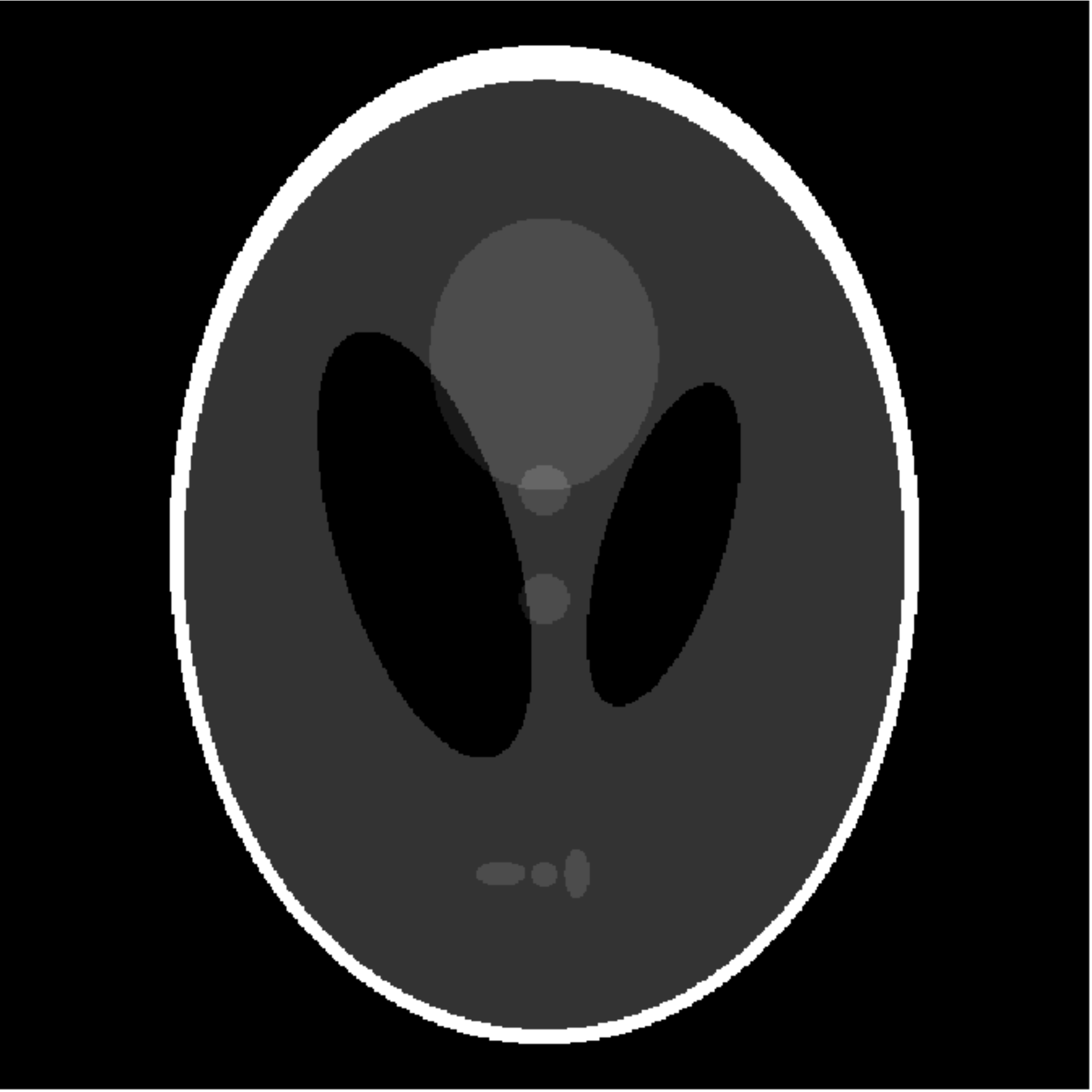}\label{subfig:phantom}}\quad
	\subfloat[Brainstem \cite{Brainstem_testimage}]{\includegraphics[height=4.5cm]{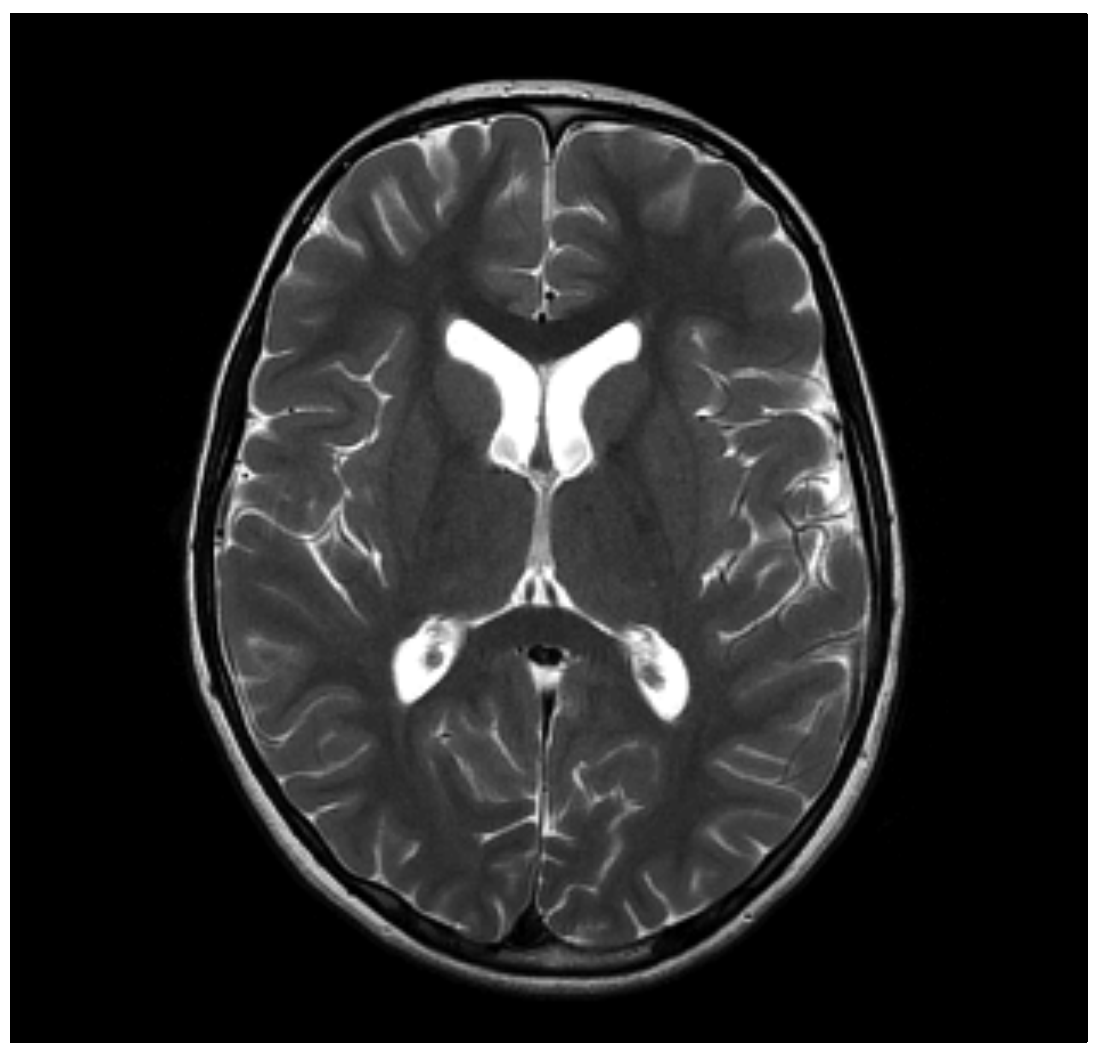}\label{subfig:brainstem}}\quad
	\subfloat[Radial pattern]{\includegraphics[height=4.5cm]{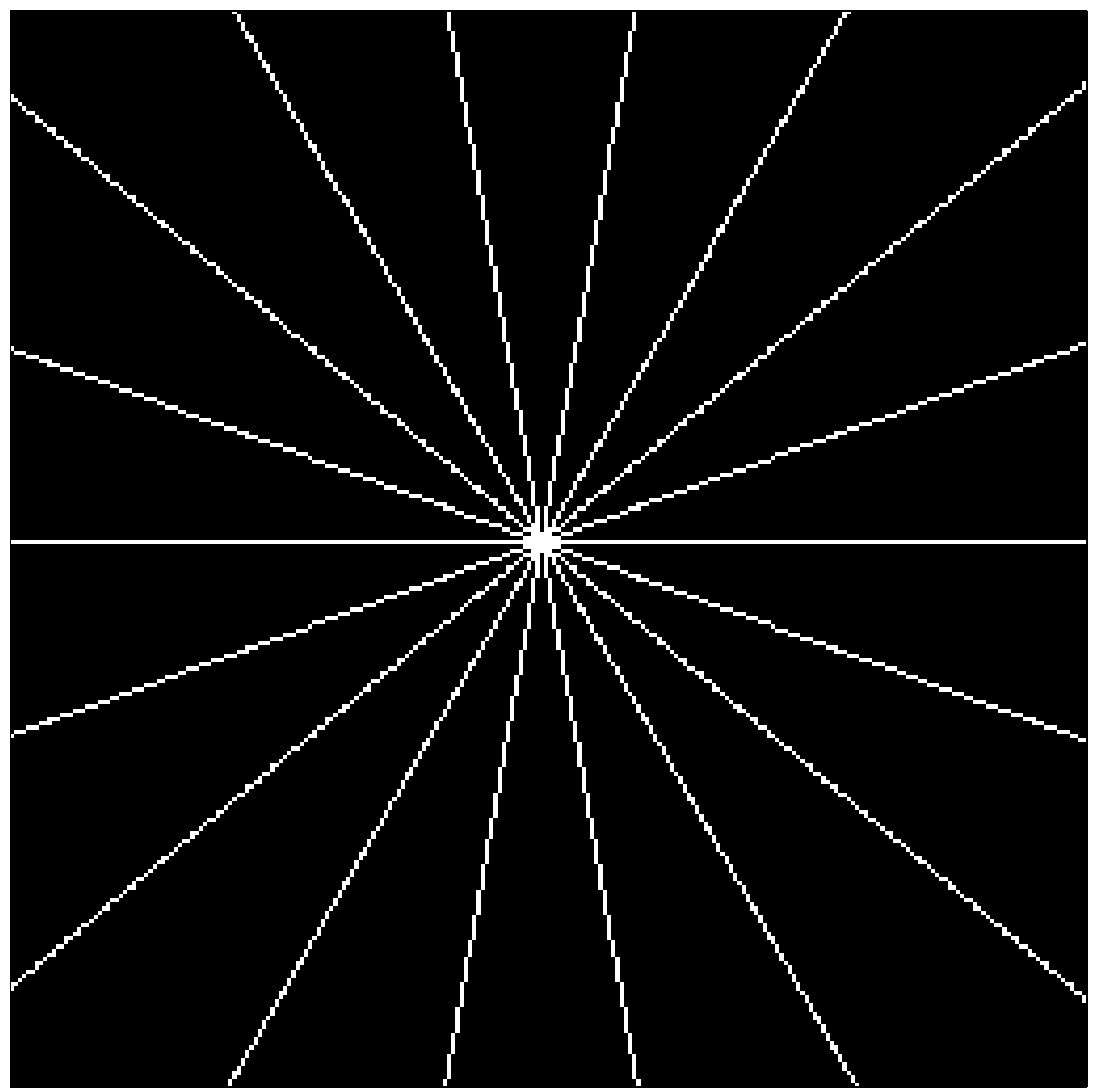}\label{subfig:radial pattern}}
	\caption{Original images.}
	\label{fig:test images}
\end{figure}

In the following experiments we are concerned with limited angle reconstructions obtained via the adapted and via the non-adapted curvelet sparse regularization. In particular, we are going to investigate these reconstructions in terms of execution time of the reconstruction procedure and the reconstruction quality. 

\subsubsection*{Experimental setup}
The limited angle Radon transform was computed for test images which are shown in Figure \ref{fig:test images}. To this end, we considered different angular ranges $[0,\Theta]$, where the parameter $\Theta$ was chosen to vary between $1^\circ$ and $180^\circ$, i.e., $\Theta\in\sparen{1^\circ,\dots,180^\circ}$. The generation of the limited angle data was done using the Matlab function \texttt{radon}. To simulate practical conditions, the generated data was corrupted by a white Gaussian noise, which was generated by the Matlab function \texttt{randn}. Having generated the limited angle data $y^\delta=\Rc_\Phi f + \eta$, we computed the CSR and A-CSR reconstructions using 100 iterations of the Algorithm \ref{alg1}. Instead of computing the system matrix directly and storing it in the memory, we implemented the transform $Kc=\Rc_\Phi T^\ast c$, \eqref{eq:discrete problem}, and its adapted version \eqref{eq:adapted problem} using the Matlab function \texttt{radon} and the CurveLab version 2.1.2, \cite{CurveLab}. Furthermore, we computed filtered backprojection (FBP) reconstructions using the Matlab function \texttt{iradon}.

\subsubsection*{Execution times}

We start by comparing the execution times of the CSR reconstructions to those of A-CSR reconstructions. The results of this experiment are plotted in Figure \ref{fig:execution times}. In this plot, the dotted line \ref{plot:execution time full} indicates the execution times of the CSR reconstructions, whereas the solid line \ref{plot:execution time adapted} shows the execution times of the adapted approach \eqref{eq:adapted csr}. The  dependence of the execution times on the available angular range shows in both cases a linear behavior. In particular, we can observe a significant speedup of the adapted procedure, especially for angular ranges $[0,\Theta]$ with $\Theta\leq 120^\circ$. The speedup exhibits a linear dependence on the available angular range which is due to the dimensionality reduction which was presented in the previous experiment, cf. Figure \ref{fig:dimension}. 

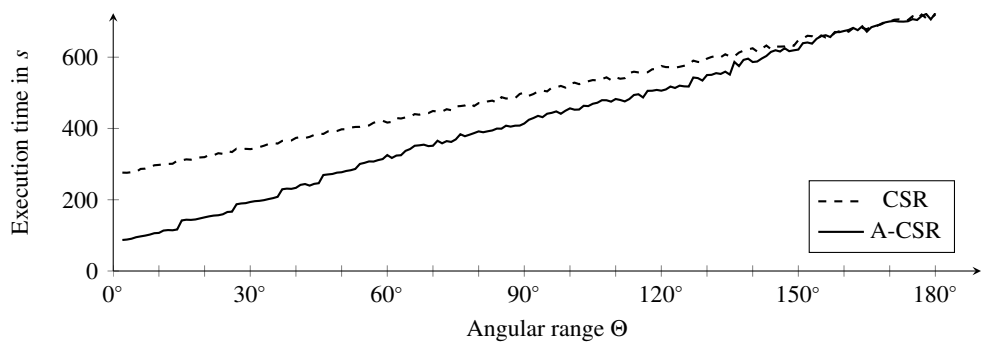
\begin{figure}[H]
\centering
\begin{tikzpicture}
	\begin{axis}[
		height=5cm, 
		width=13cm,
		xlabel={Angular range $\Theta$},
		xtick={0,10,...,180},
		xticklabels={$0^\circ$,,,$30^\circ$,,,$60^\circ$,,,$90^\circ$,,,$120^\circ$,,,$150^\circ$,,,$180^\circ$},
		xmin = 0,
		xmax= 190,
		axis x line = bottom,
		axis y line = left,
		ylabel={Execution time in $s$},
		ymin=0,
		scaled ticks=false,
		tick label style={font=\small},
		label style={font=\small},
		legend style={font=\small},
		legend style={at={(0.97,0.07)},anchor=south east},
		skip coords between index={0}{1},
		]
				
		\addplot  [thick,dashed] file {csr_time_iterations100.dat};
		\addlegendentry{CSR}
		\label{plot:execution time full}
		
		\addplot  [thick]  file {csr_adapted_time_iterations100.dat};
		\addlegendentry{A-CSR}
		\label{plot:execution time adapted}
	\end{axis}%
\end{tikzpicture}%
\caption{Execution times for CSR and A-CSR reconstruction using 100 iterations of Algorithm \ref{alg1}. Reconstruction of the Shepp-Logan head phantom of size $256\times 256$ (Figure \ref{fig:test images}\subref{subfig:phantom}) at different angular ranges $[0,\Theta]$, $\Theta\in\sparen{1^\circ,\dots,180^\circ}$. There is a significant speedup of the reconstruction procedure when using A-CSR.}
\label{fig:execution times}
\end{figure}

\subsubsection*{Reconstruction quality}

The results of the limited angle reconstruction are show in Figures \ref{fig:rec phantom} - \ref{fig:rec radial pattern} for angular ranges $[0,35^\circ]$ and $[0,160^\circ]$. The original images corresponding to these reconstruction are shown in Figure \ref{fig:test images}. We investigate the reconstruction quality, first, by considering the CSR and the A-CSR reconstructions of the Shepp-Logan head phantom. These reconstructions are shown in the first and in the second column of Figure \ref{fig:rec phantom}. By visually inspecting the images in each row separately, we can observe no difference in image quality. Inspecting reconstructions of the brainstem image (Figure \ref{subfig:brainstem}) and the radial pattern image (Figure \ref{subfig:radial pattern}) which are shown in Figures \ref{fig:rec brainstem} and \ref{fig:rec radial pattern}, we can again observe that there no difference in image quality between CSR and A-CSR reconstructions. Therefore, we infer that the CSR and the A-CSR produce reconstructions of the same visual quality. 

To make these observations independent of visual perception, we used the \emph{mean squared error} (MSE) as a quality measure. This is defined as 
\[\mathrm{MSE}(c^\mathrm{rec}) = \frac{1}{N}\sum_{n=1}^N\abs{c_n-c^\mathrm{rec}_n}^2,\]
where $c$ are the curvelet coefficients of the original image and $c^\mathrm{rec}$ denotes those curvelet coefficients which were obtained via CSR or A-CSR at different angular ranges. The resulting MSE values are plotted in Figure \ref{plot:mse}. As a function of the angular range parameter $\Phi$, MSE is decreasing for the non-adapted as well as for the adapted reconstruction method. However, the plots of the MSE values for CSR and A-CSR reconstructions again seem to be identical, cf. Figure \ref{plot:mse}. To refine our investigation we additionally consider the relative MSE,
\[\mathrm{MSE}(c^\mathrm{CSR},c^\mathrm{A-CSR}) = \frac{1}{N}\sum_{n=1}^N\abs{c^\mathrm{CSR}_n-c^\mathrm{A-CSR}_n}^2,\]
which compares the reconstructed curvelet coefficients obtained via CSR and those obtained via A-CSR. The plot of these values is shown in Figure \ref{plot:relative mse}. Here, we can observe how large the difference between the CSR and A-CSR reconstructions is in the case of Shepp-Logan head phantom reconstructions. Depending on the available angular range, the relative MSE values differ between $10^{-5}$ and $10^{-7}$.

As a result of the above discussion, we can conclude that the difference in the reconstruction quality of the CSR and the A-CSR reconstructions is very small. Visually, the reconstructions are not distinguishable. Therefore, the advantage of using the A-CSR approach consists in its significantly faster execution time. 

However, one might still ask where these differences come from? A possible explanation would be as follows: Since the reconstructed sequence of curvelet coefficient $c^\mathrm{CSR}$ contains invisible curvelet coefficients, these values may be not zero after a finite number of iterations and, hence, these values would contribute to the relative MSE. Such a behavior was observed during these numerical experiments, thought the values of the invisible curvelets were very small.

\begin{figure}[H]
\centering
\[
\begin{array}{lccc}
	\toprule
	&\mathrm{CSR}  & \mathrm{A-CSR} & \mathrm{FBP}\\
	\toprule\\
	\Theta=35^\circ
	&\includegraphics[height=4.5cm]{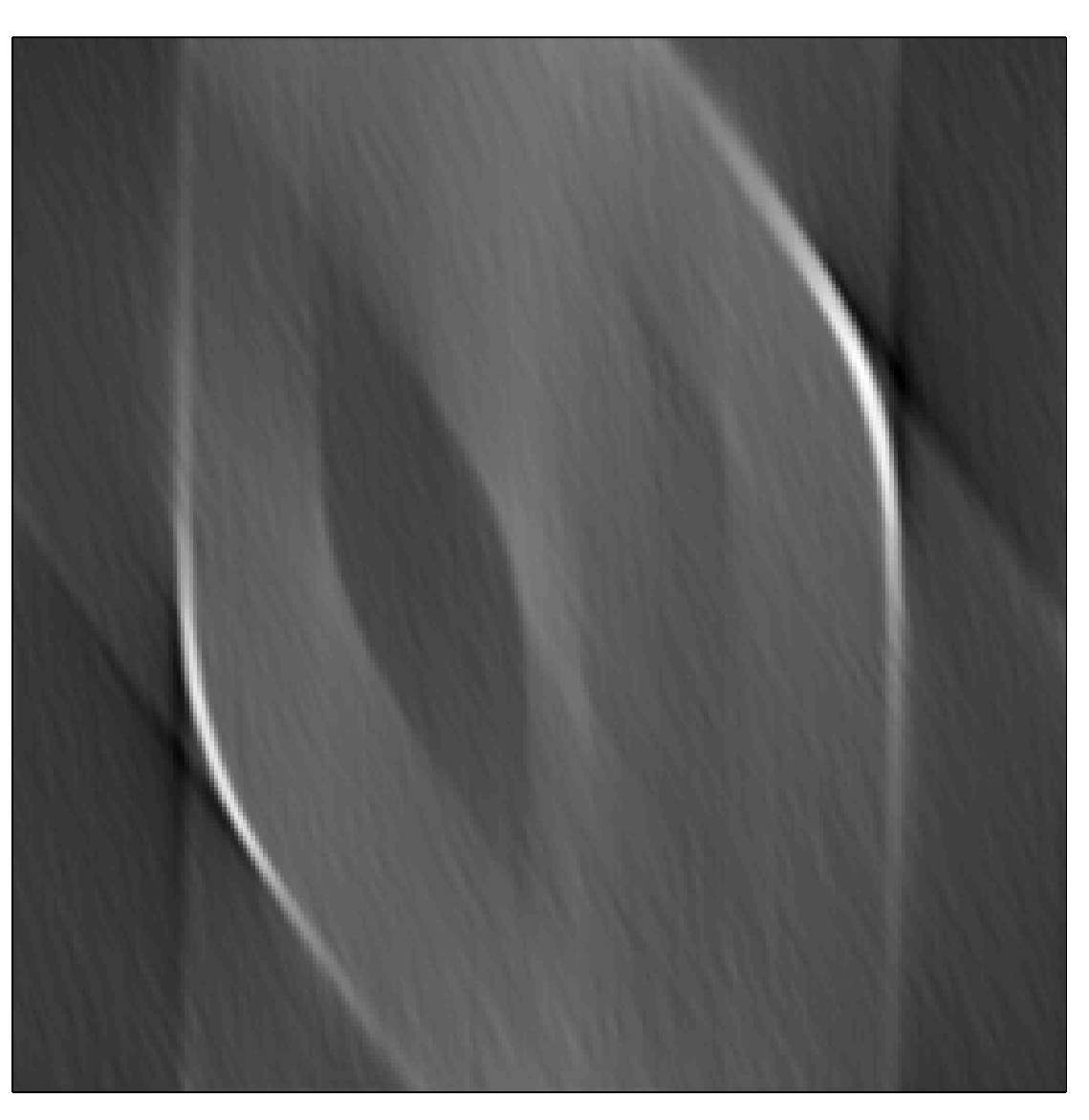}
	&\includegraphics[height=4.5cm]{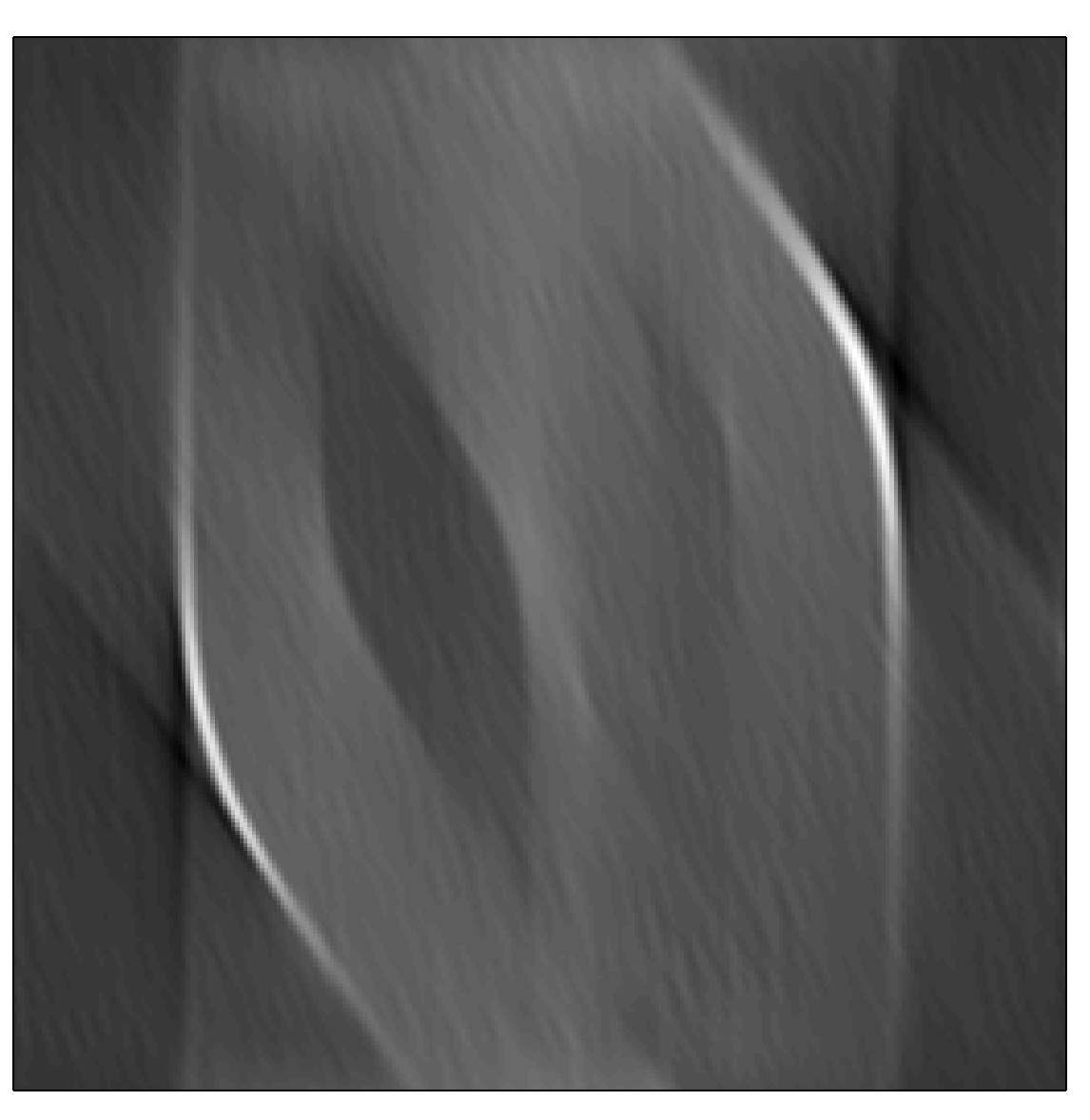}
	&\includegraphics[height=4.5cm]{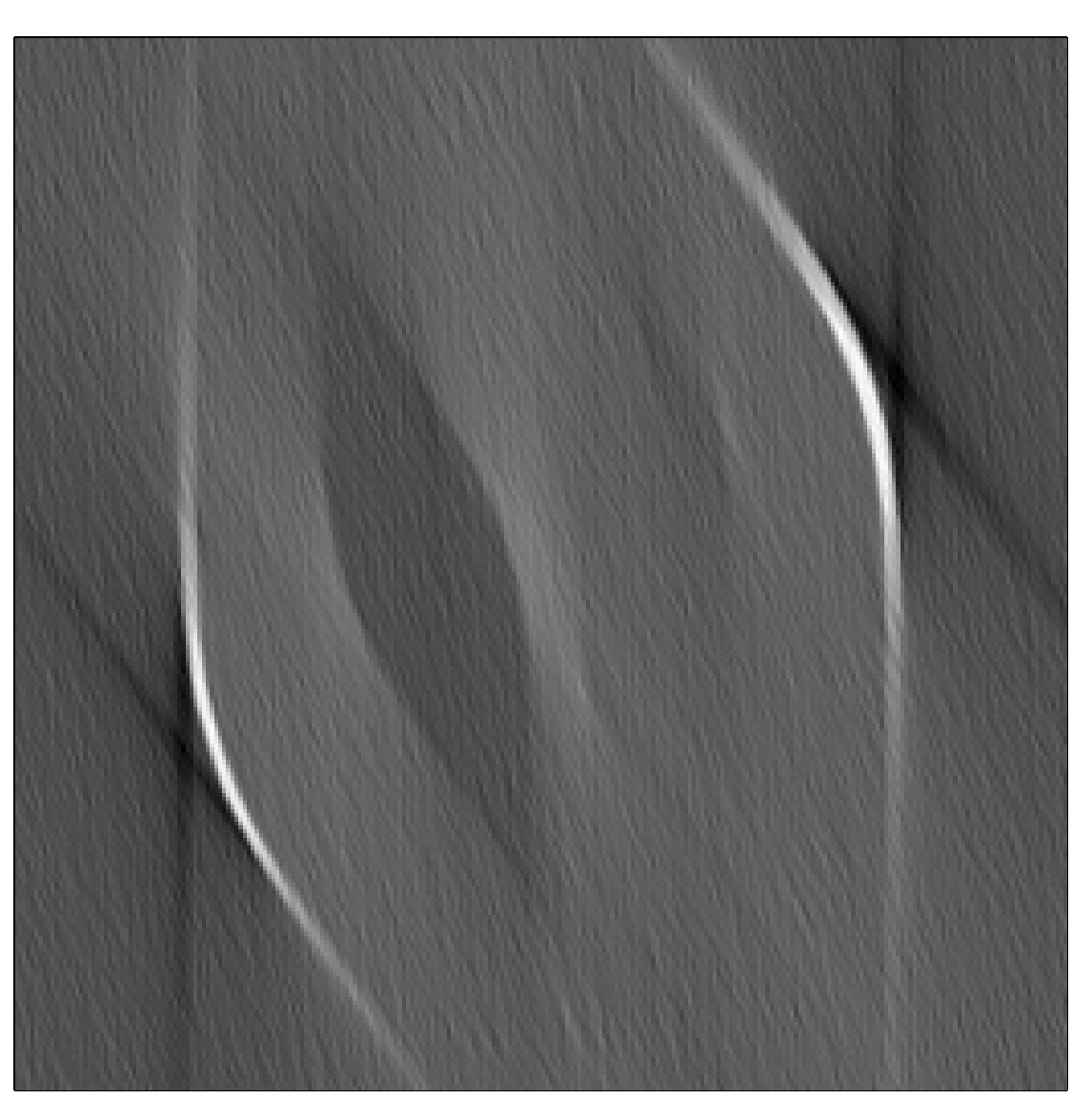}\\[2ex] 
	\midrule\\

	\Theta=160^\circ 
	&\includegraphics[height=4.5cm]{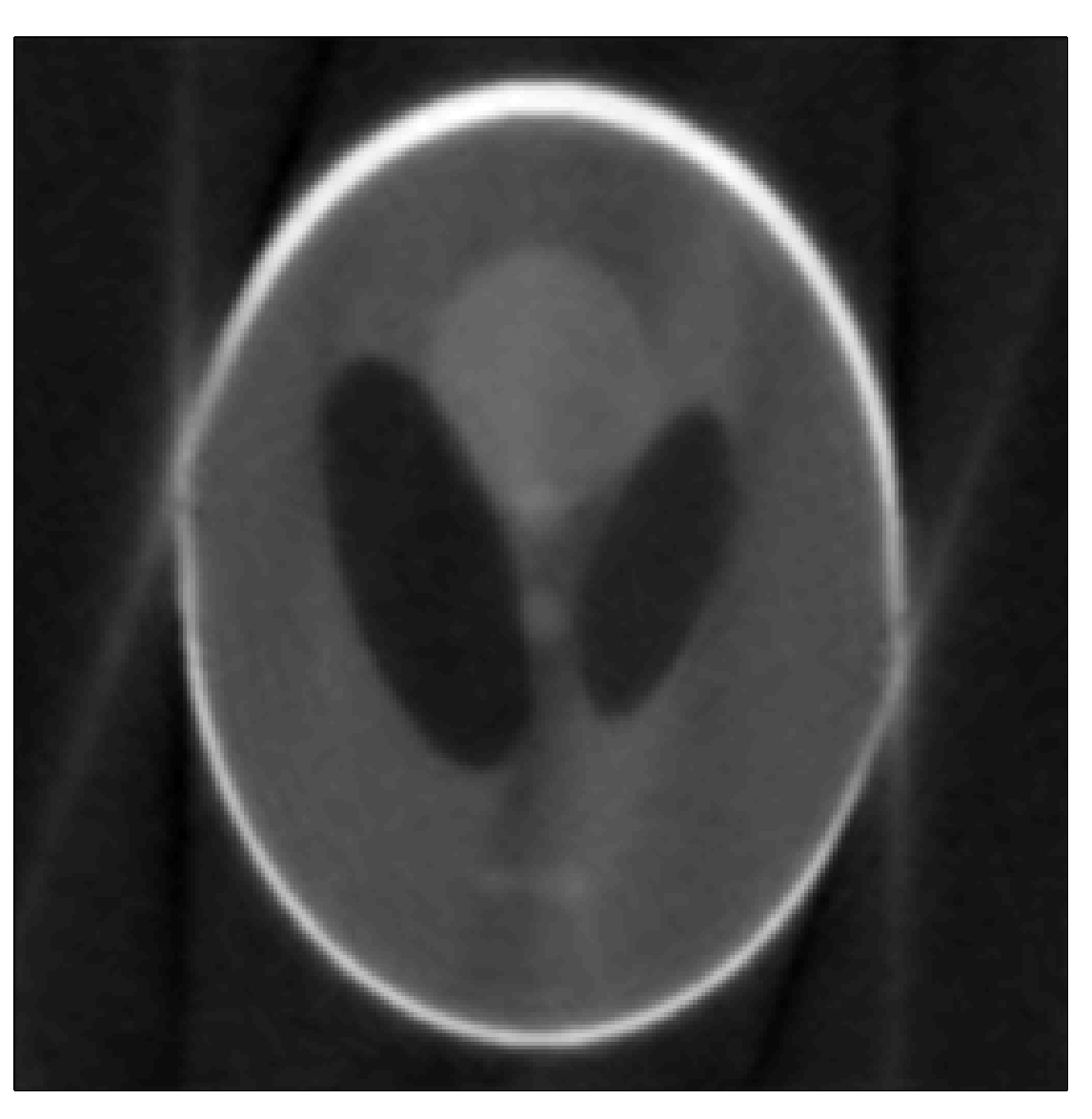}
	&\includegraphics[height=4.5cm]{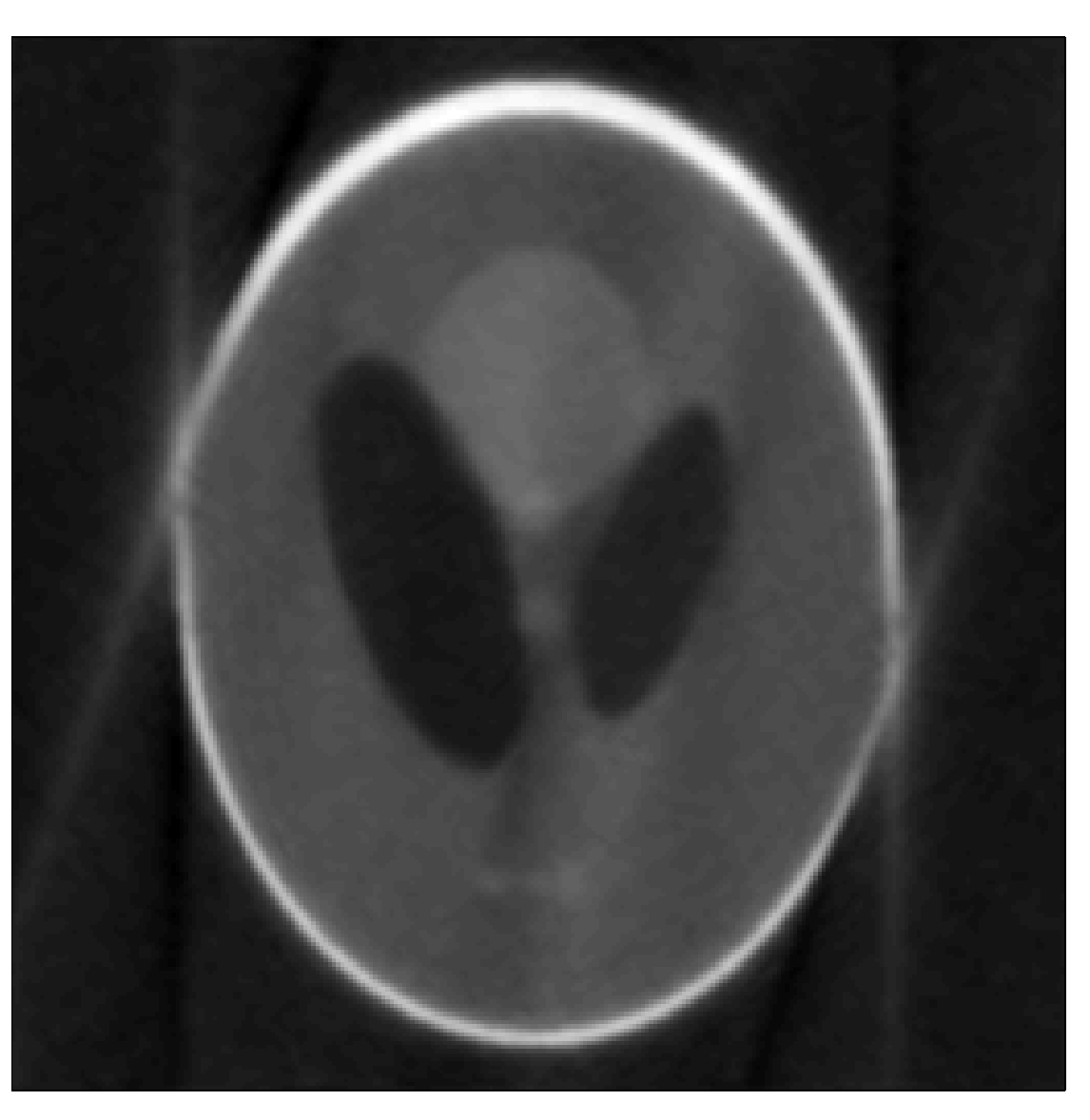}
	&\includegraphics[height=4.5cm]{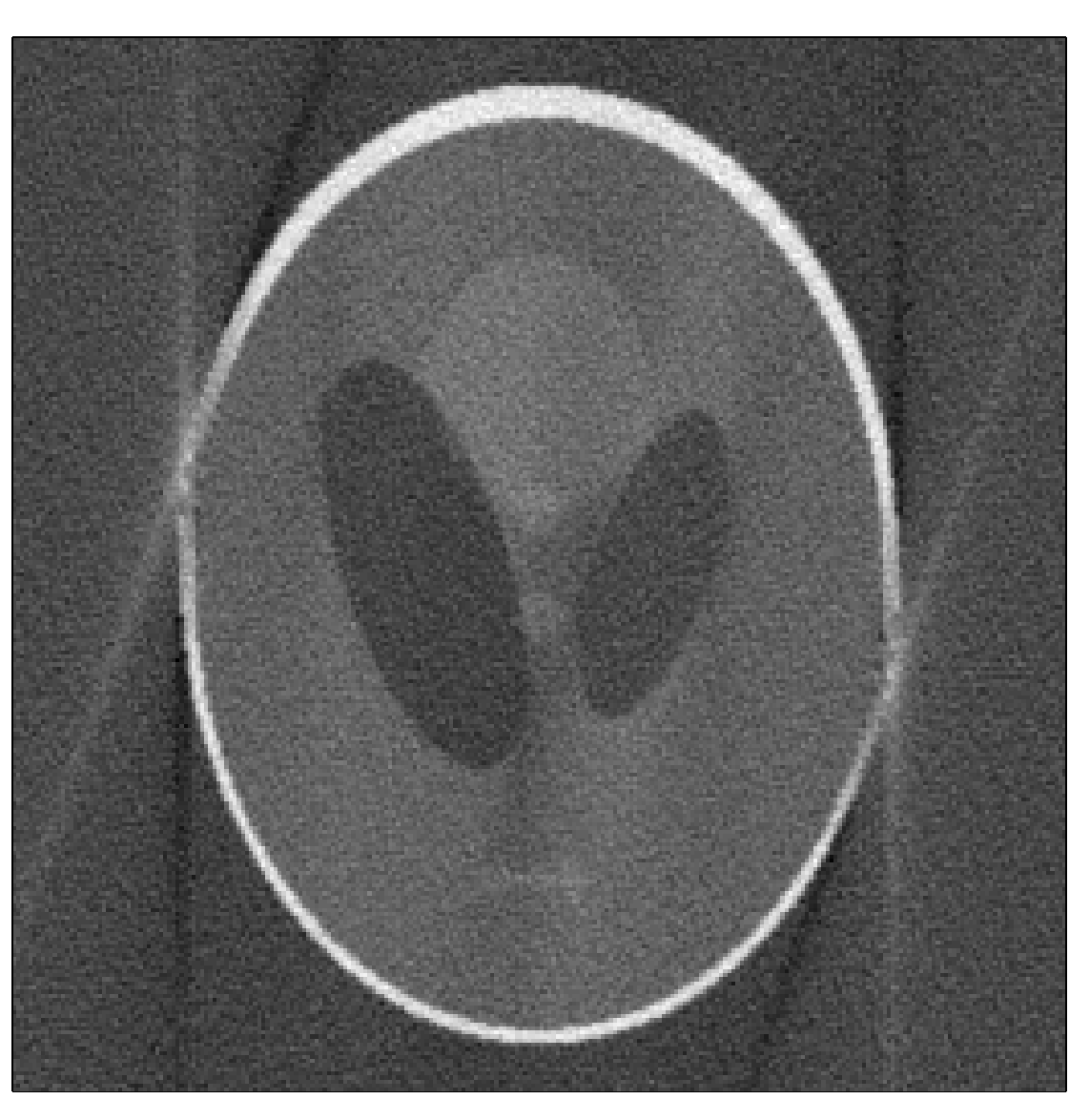}\\[2ex] 
	\bottomrule
\end{array}
\]
\caption{Reconstruction of the Shepp-Logan head phantom of size $256\times 256$ (Figure \ref{fig:test images}\subref{subfig:phantom})  at an angular range $[0,\Theta]$ and noiselevel $2\%$ by using CSR, A-CSR and FBP. In the above matrix of images, each row shows a reconstruction corresponding to the angular range parameter $\Theta\in\sparen{35^\circ,160^\circ}$. Visually, there is no difference between CSR and A-CSR reconstructions for any of the angular ranges. However, the A-CSR reconstructions were computed significantly faster. In contrast to FBP, CSR and A-CSR reconstructions appear less noisy. Though CSR and A-CSR reconstructions are slightly smoother than the FBP reconstructions, the edges are clearly visible.}
\label{fig:rec phantom}
\end{figure}

\begin{figure}[H]
\centering
\[
\begin{array}{lccc}
	\toprule
	&\mathrm{CSR}  & \mathrm{A-CSR} & \mathrm{FBP}\\
	\toprule\\
	\Theta=35^\circ
	&\includegraphics[height=4.2cm]{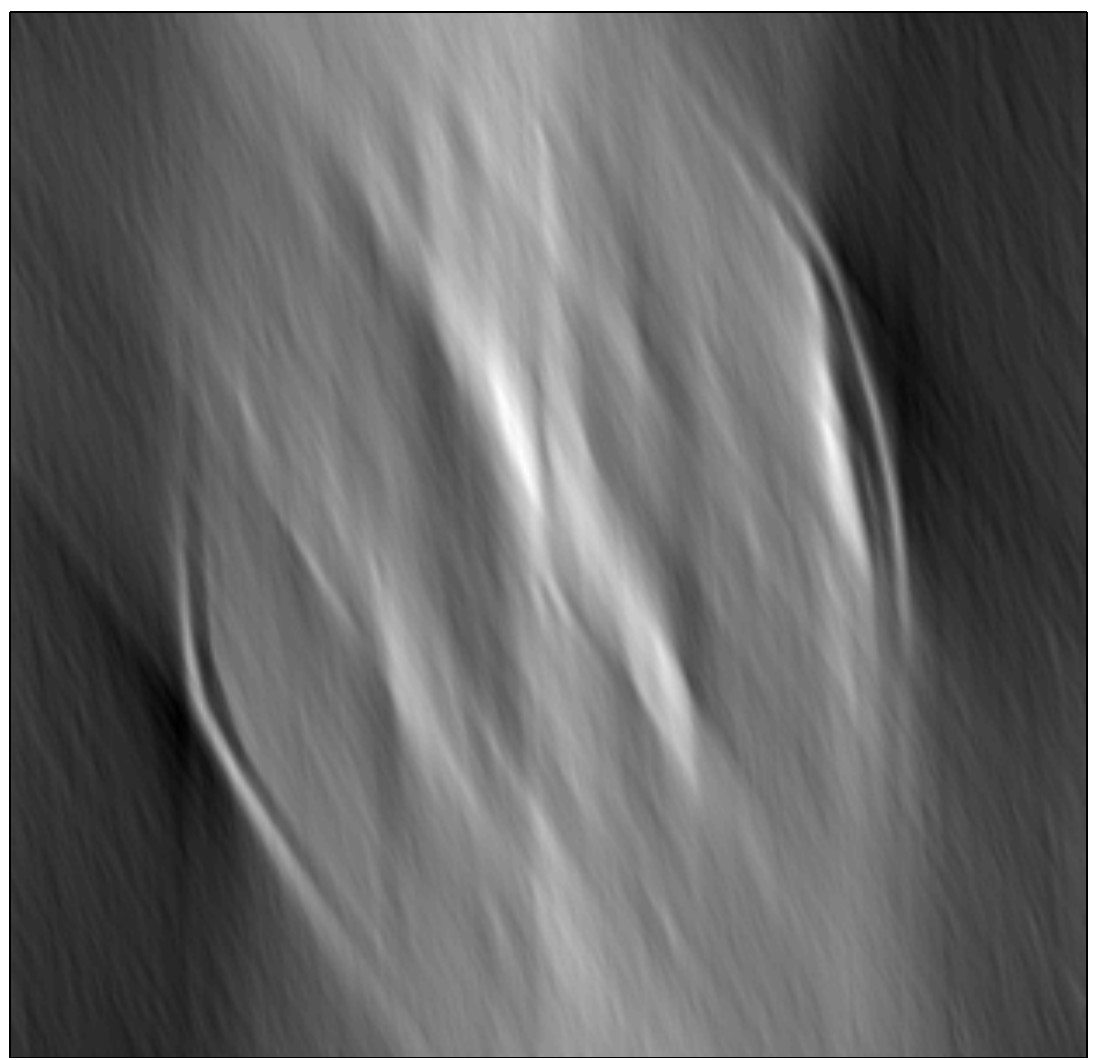}
	&\includegraphics[height=4.2cm]{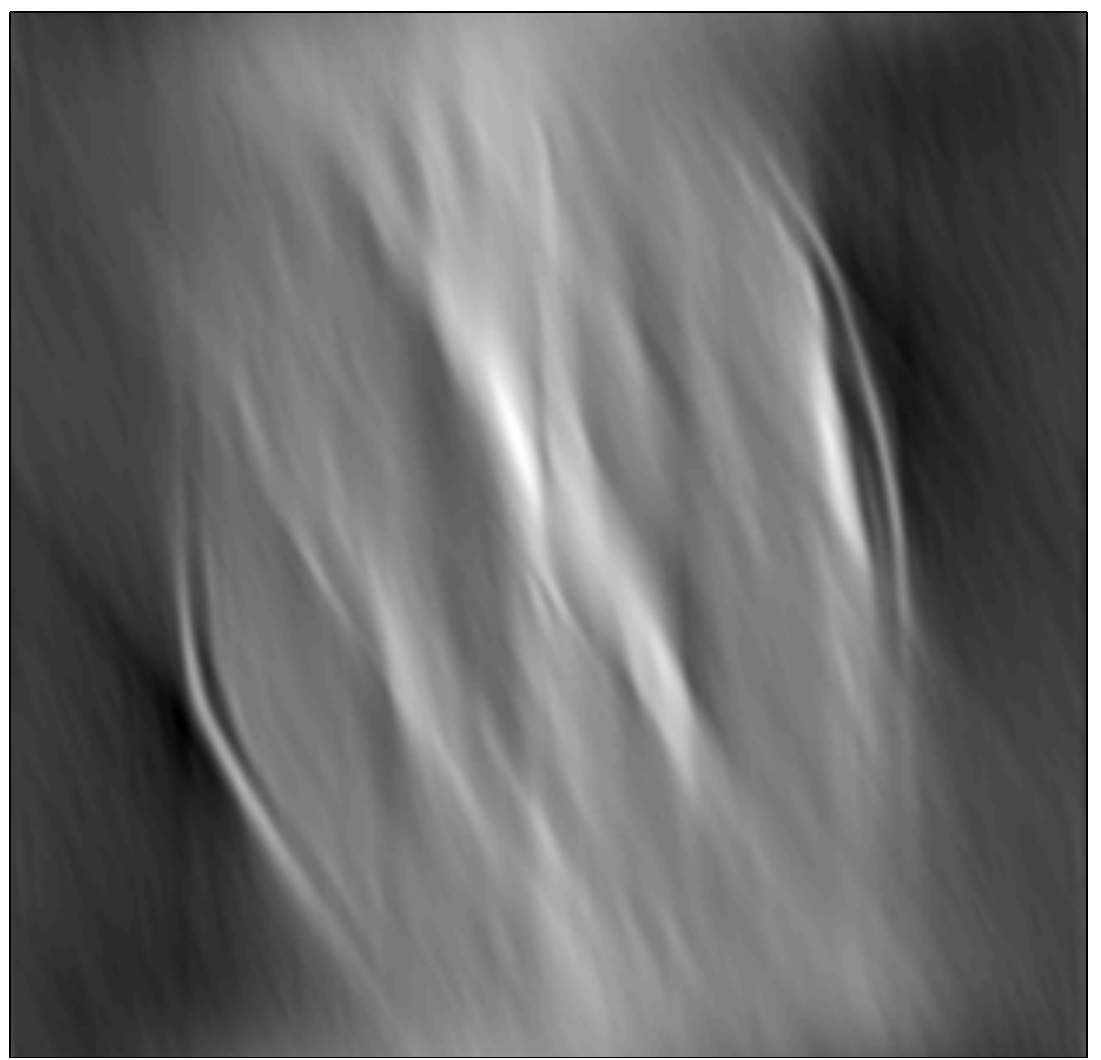}
	&\includegraphics[height=4.2cm]{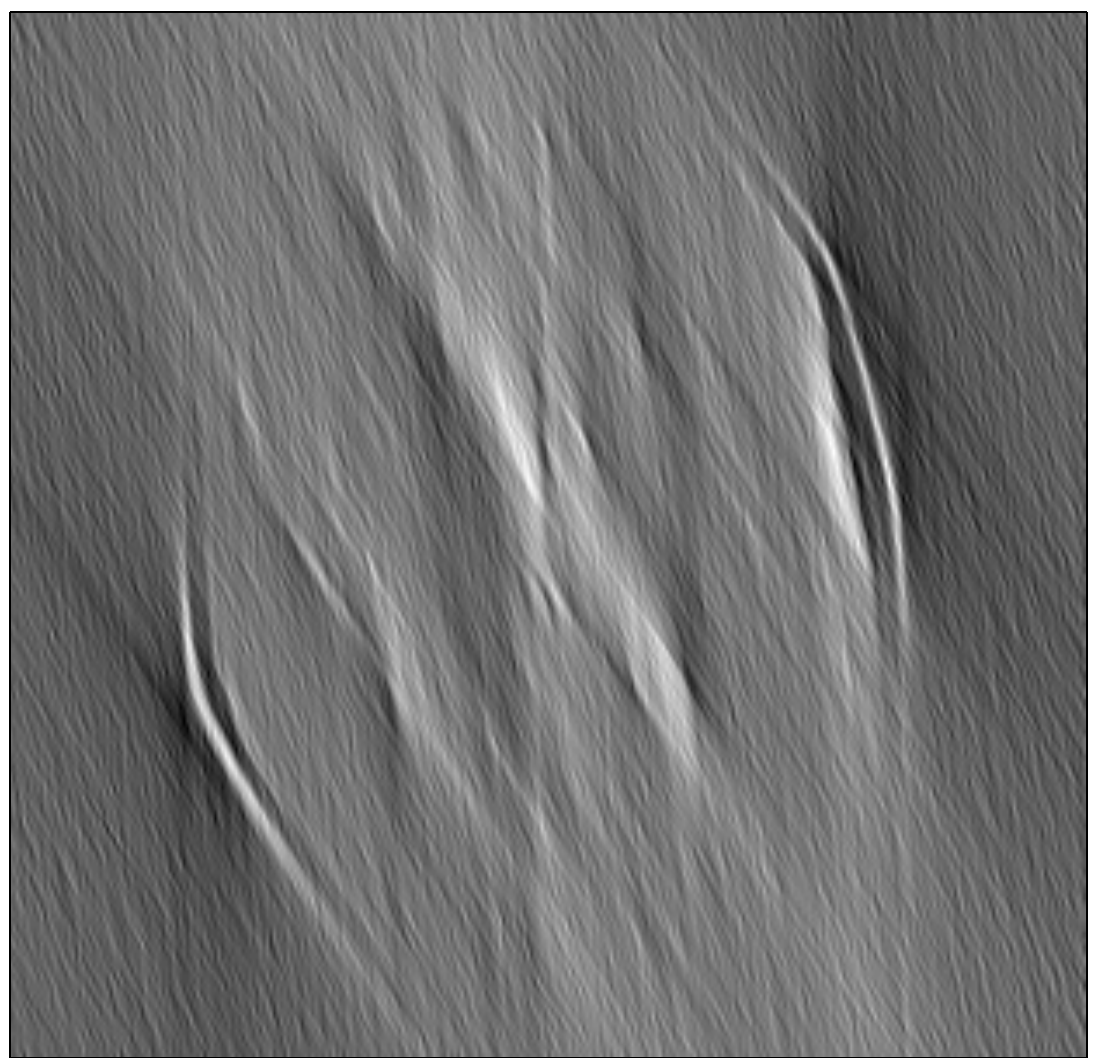}\\[2ex] 
	\midrule\\

	\Theta=160^\circ 
	&\includegraphics[height=4.2cm]{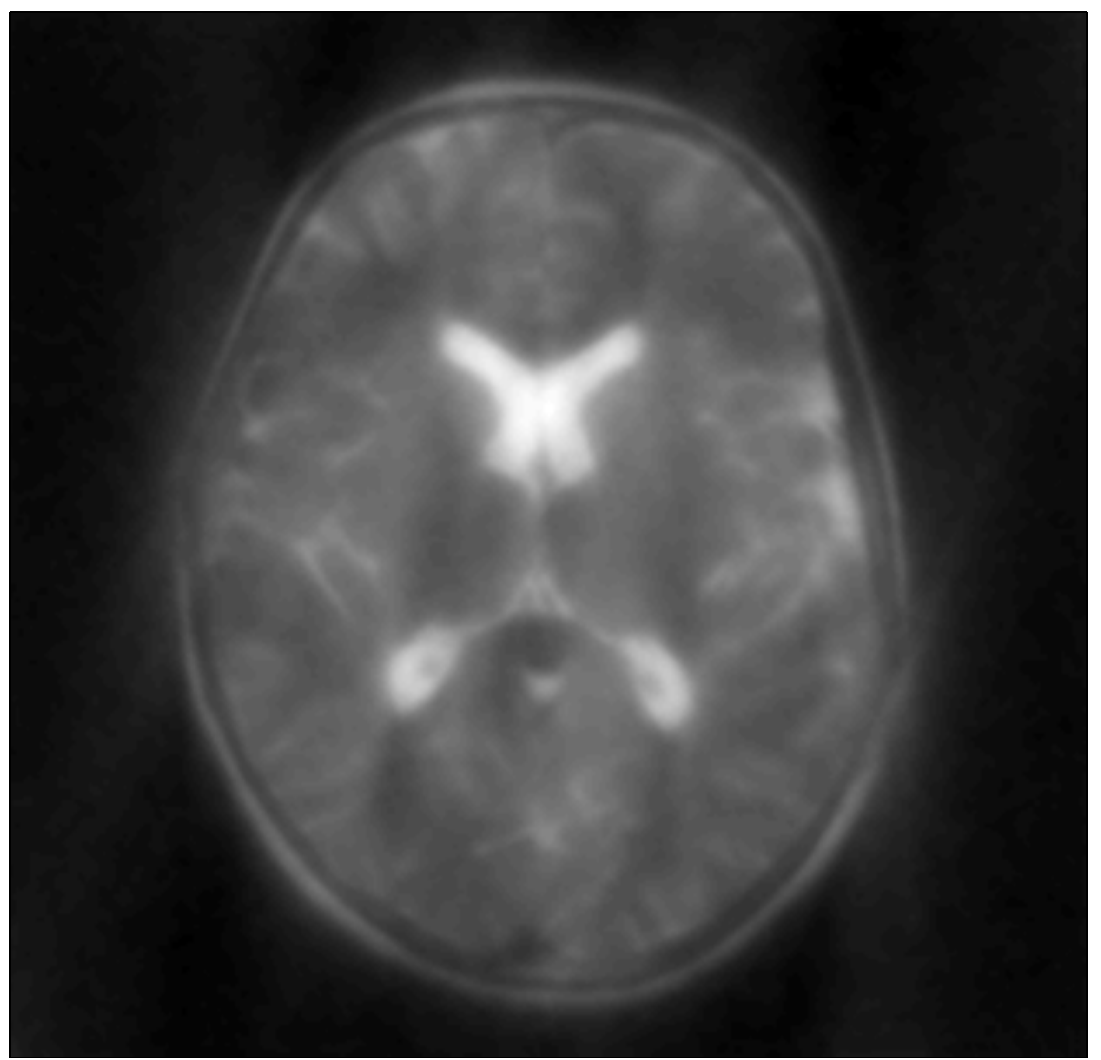}
	&\includegraphics[height=4.2cm]{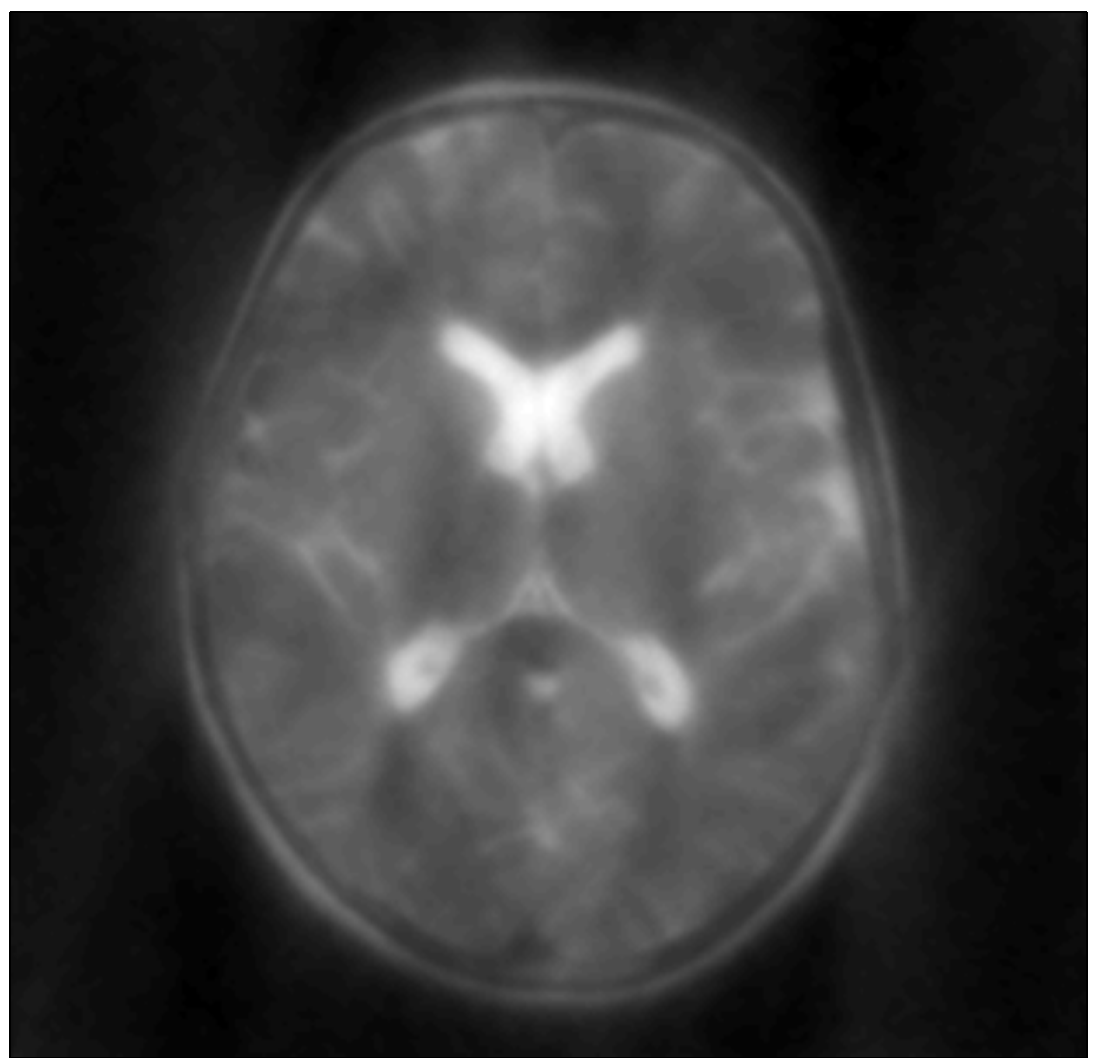}
	&\includegraphics[height=4.2cm]{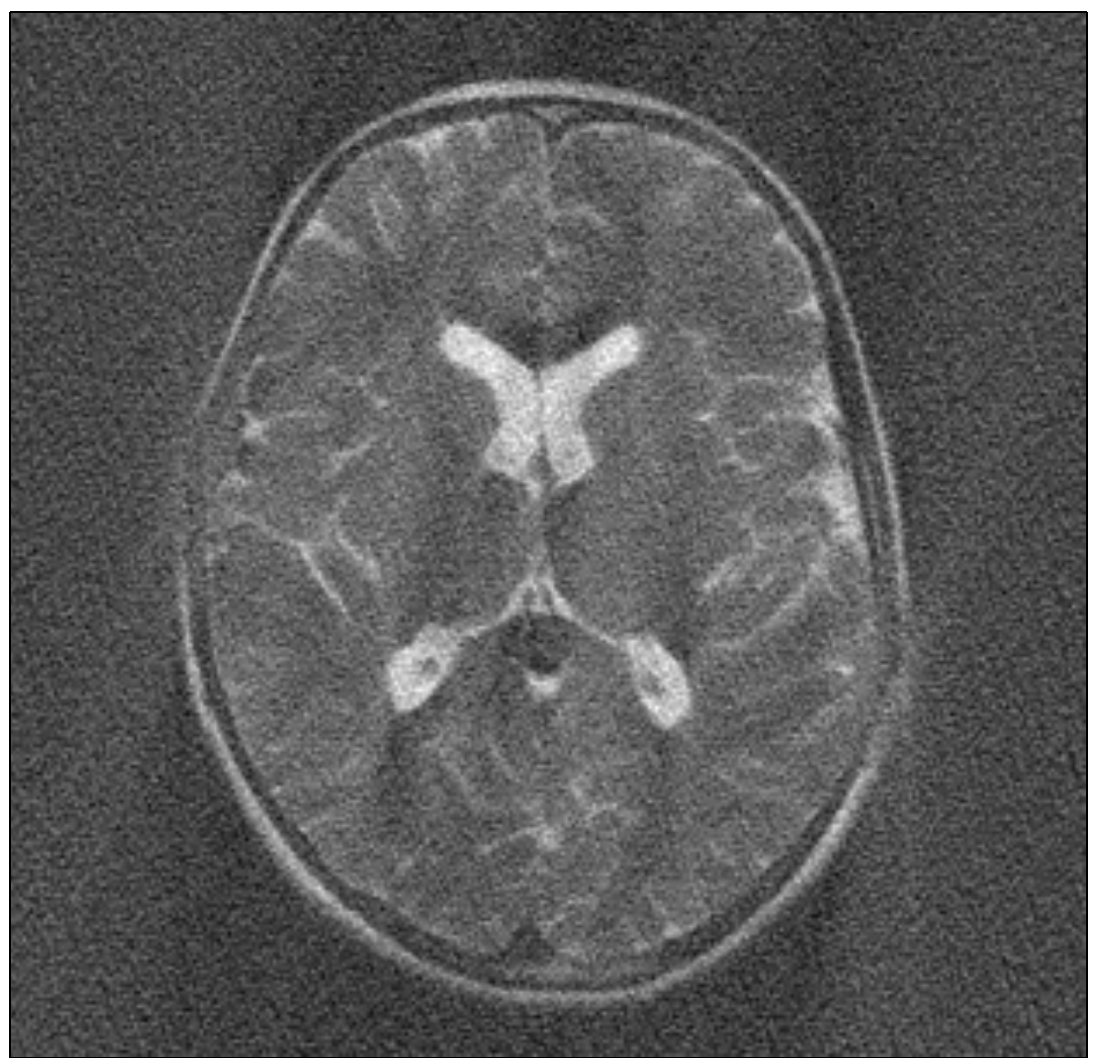}\\[2ex] 
	\bottomrule
\end{array}
\]
\caption{Reconstruction of a brainstem glioma of size $301\times 310$ (Figure \ref{fig:test images}\subref{subfig:brainstem}) at an angular range $[0,\Theta]$, noiselevel $2\%$ using CSR, A-CSR and FBP. Again, there is no visible difference between CSR and A-CSR reconstructions for any of the angular ranges. Though CSR and A-CSR reconstructions are slightly smoother than the FBP reconstructions, the overall image quality is better in the case of CSR and A-CSR.}
\label{fig:rec brainstem}
\end{figure}

\begin{figure}[H]
\centering
\[
\begin{array}{lccc}
	\toprule
	&\mathrm{CSR}  & \mathrm{A-CSR} & \mathrm{FBP}\\
	\toprule\\
	\Theta=35^\circ
	&\includegraphics[height=4.2cm]{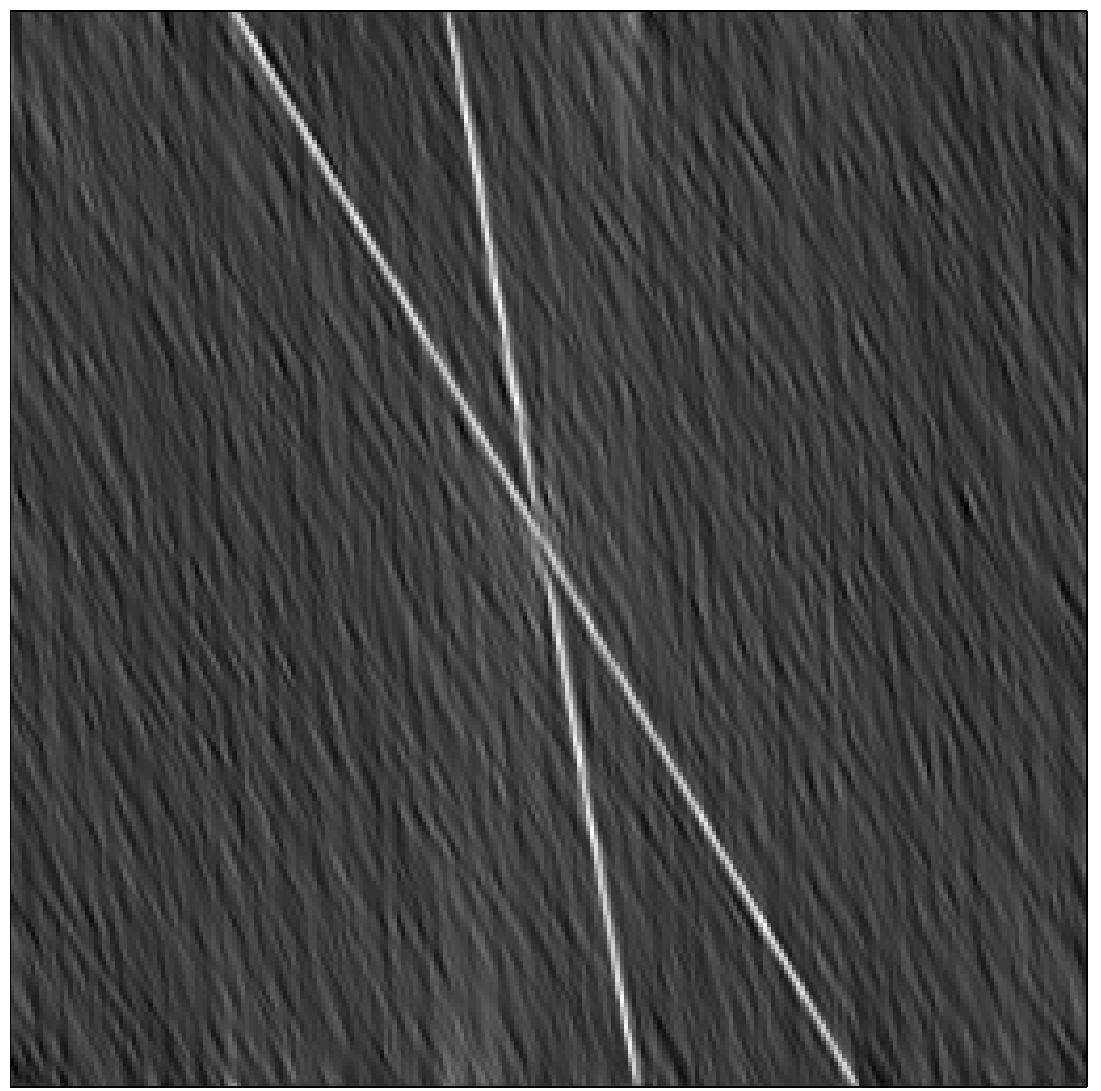}
	&\includegraphics[height=4.2cm]{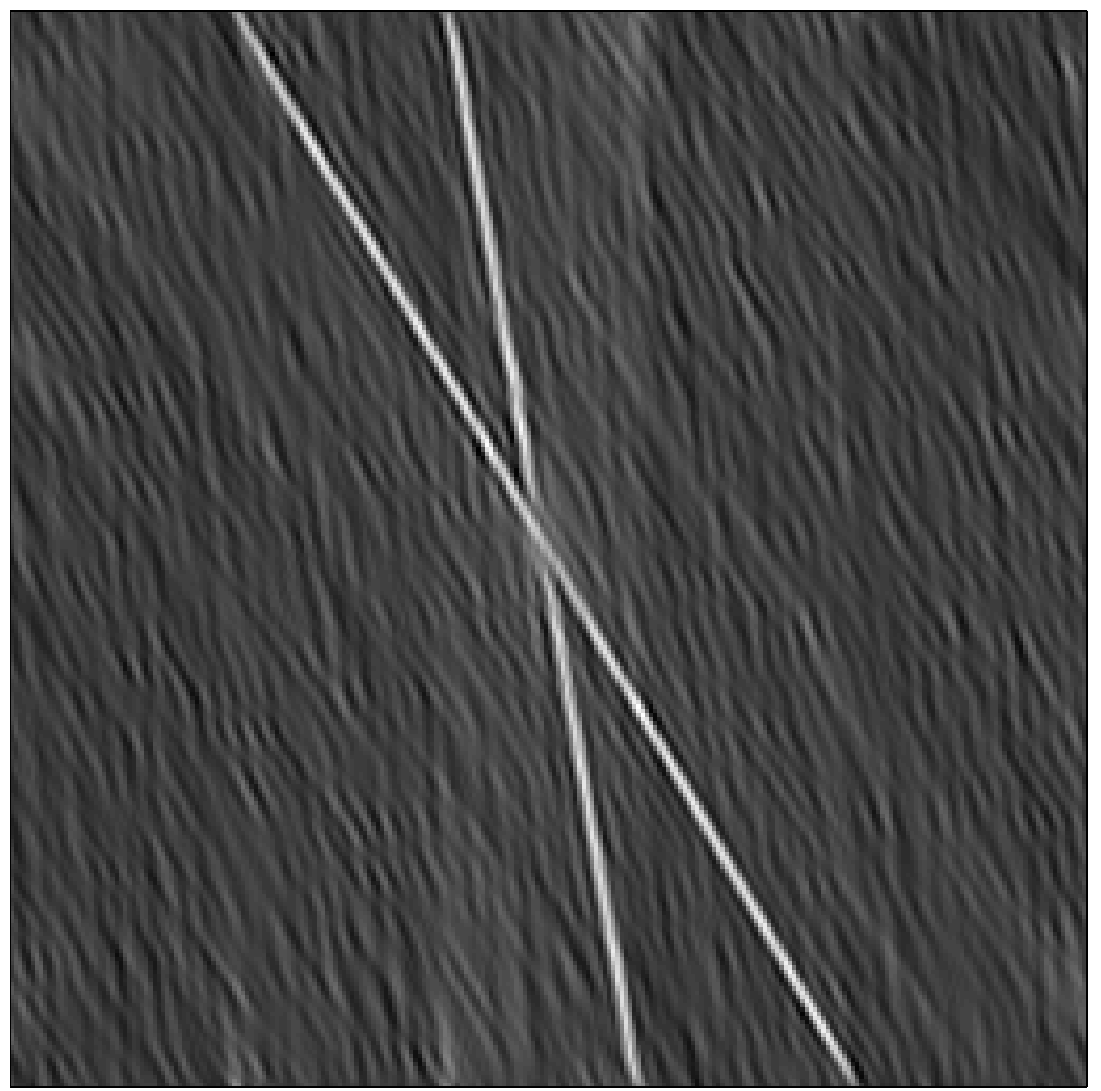}
	&\includegraphics[height=4.2cm]{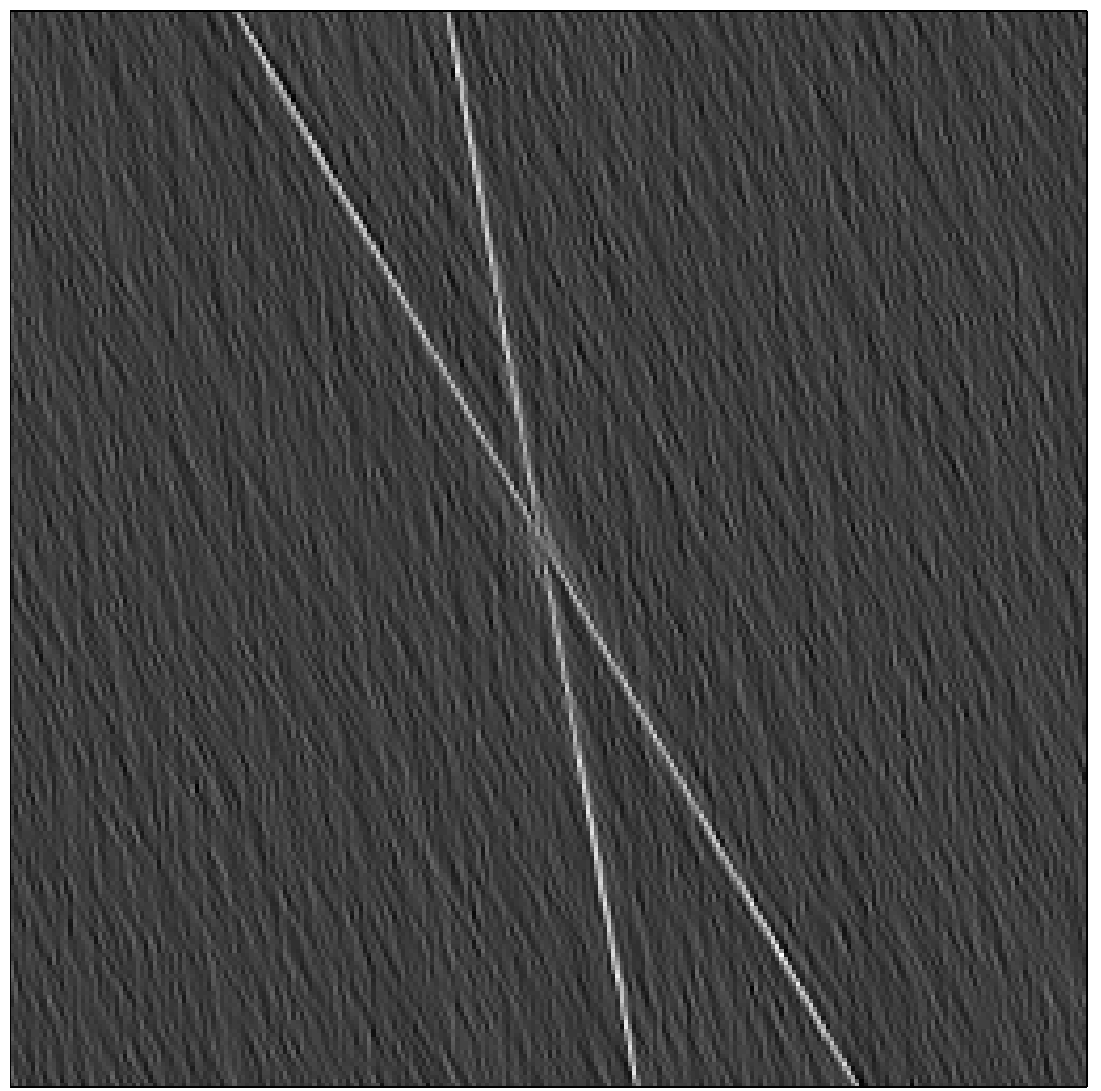}\\[2ex] 
	\midrule\\

	\Theta=160^\circ 
	&\includegraphics[height=4.2cm]{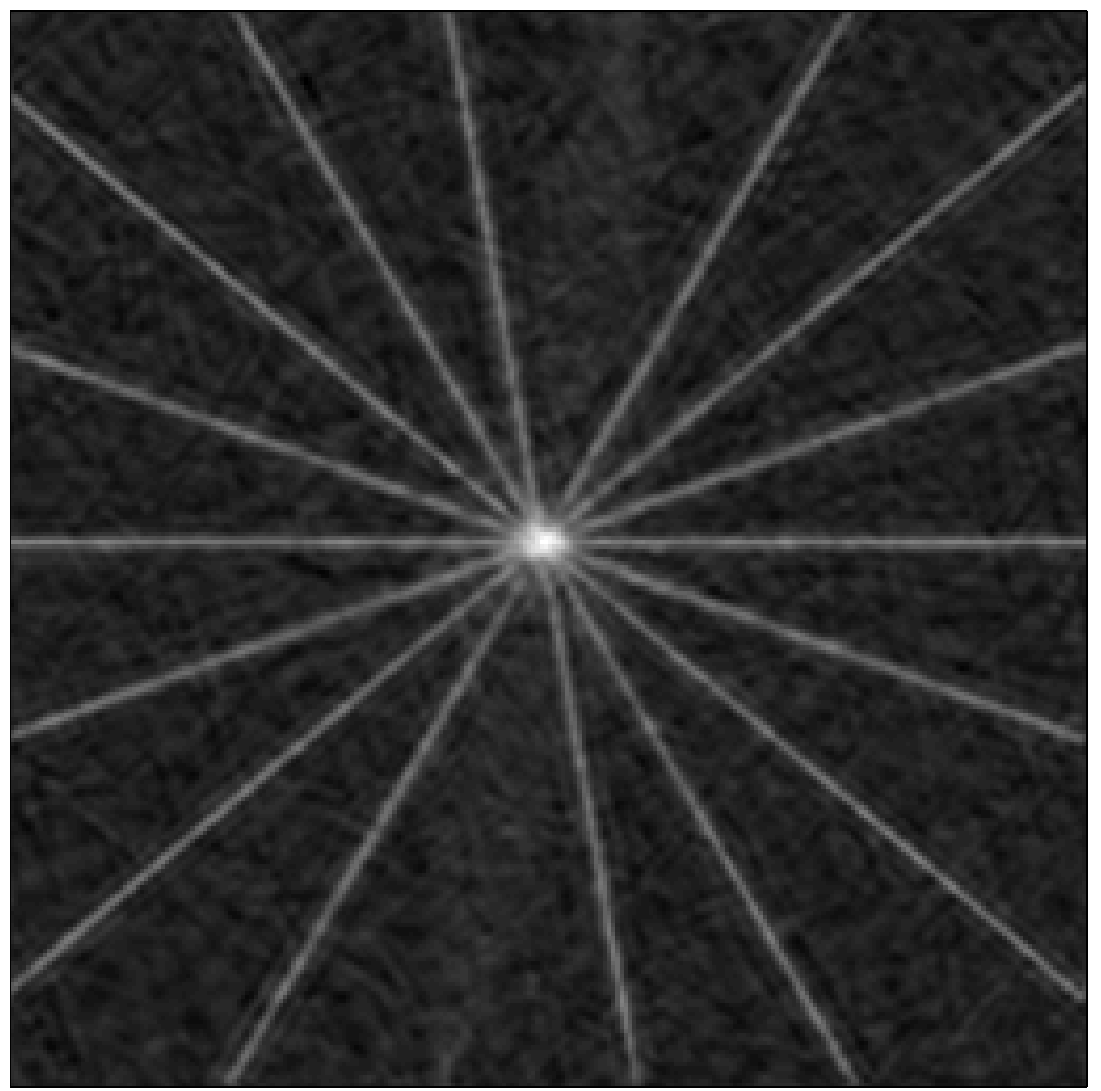}
	&\includegraphics[height=4.2cm]{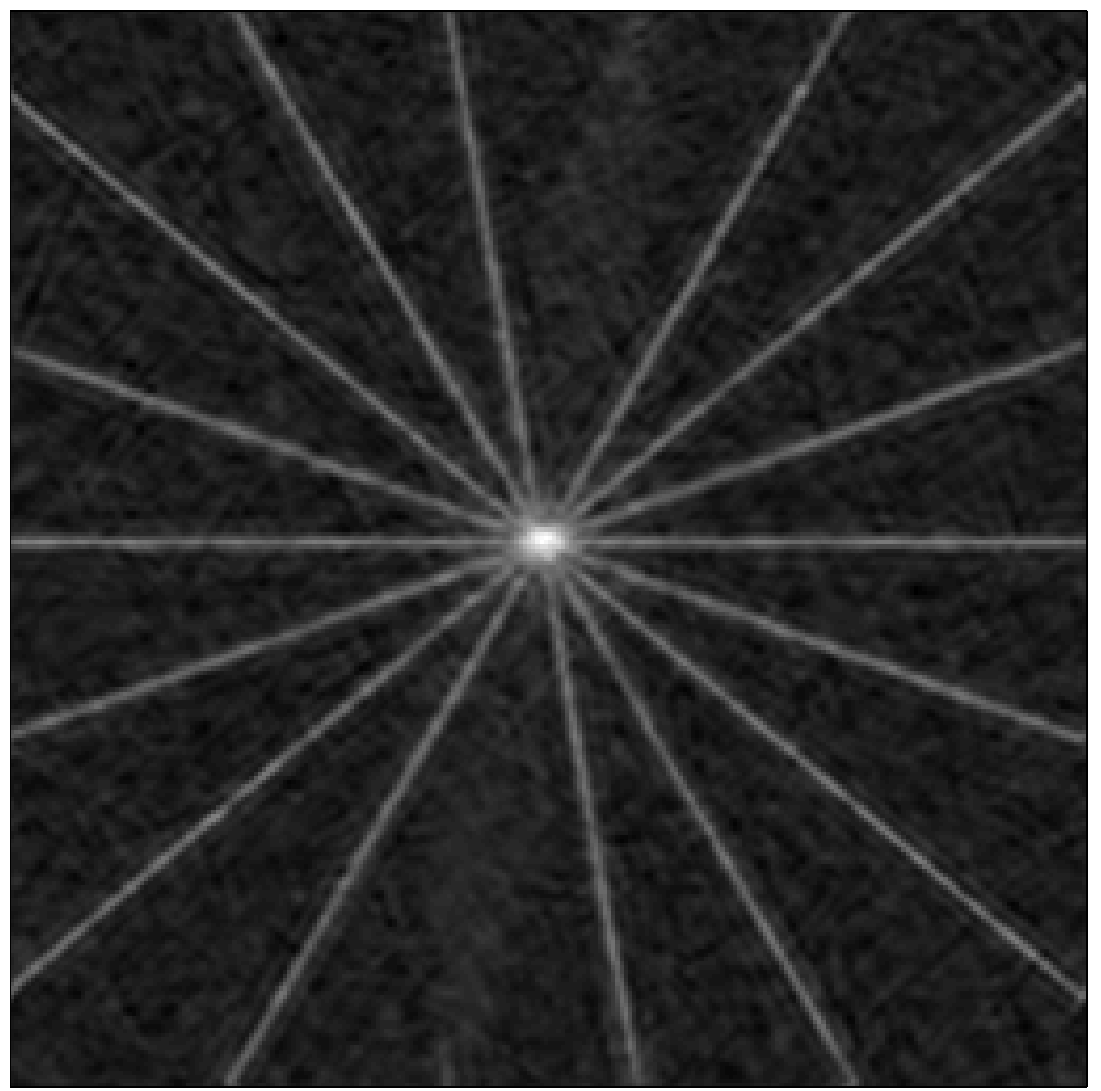}
	&\includegraphics[height=4.2cm]{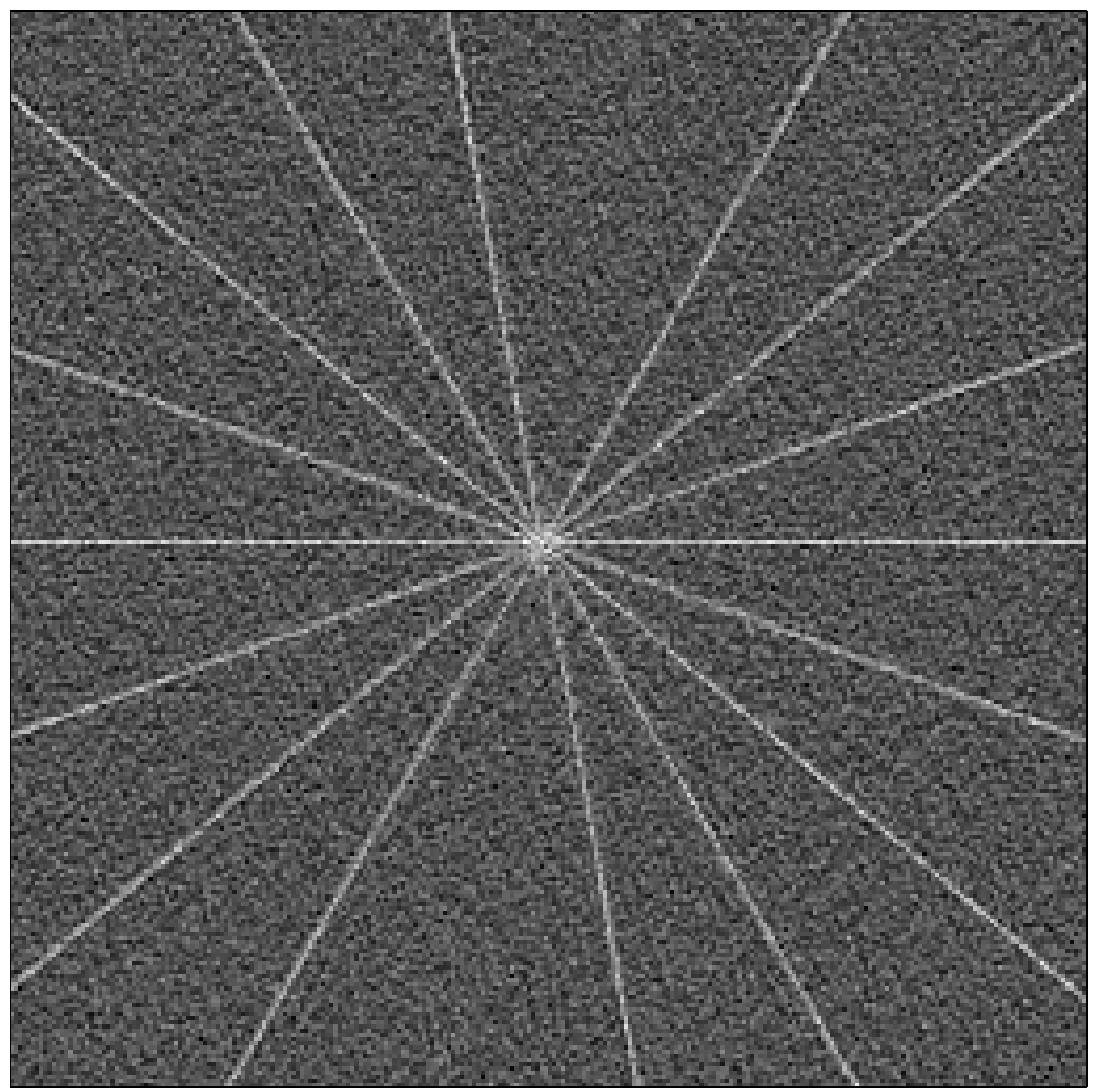}\\[2ex] 
	\bottomrule
\end{array}
\]
\caption{Reconstruction of a radial pattern of size $256\times 256$ (Figure \ref{fig:test images}\subref{subfig:radial pattern}) at an angular range $[0,\Theta]$, noiselevel $2\%$ using CSR, A-CSR and FBP. Same conclusions can be drawn as in the Figures \ref{fig:rec phantom} and \ref{fig:rec brainstem}. However, in this case we can additionally observe that only those lines are reconstructed whose normal directions are within the available angular range.}
\label{fig:rec radial pattern}
\end{figure}

\begin{figure}[H]
\centering
\begin{tikzpicture}
	\begin{axis}[
		height=5.2cm, 
		width=12.95cm,
		xmin = 0,
		xmax= 190,
		ymin=0,
		ymax=0.009,
		axis x line = bottom,
		axis y line = left,
		xlabel={Angular range $\Theta$},
		xtick={0,10,...,180},
		xticklabels={$0^\circ$,,,$30^\circ$,,,$60^\circ$,,,$90^\circ$,,,$120^\circ$,,,$150^\circ$,,,$180^\circ$},
		ylabel={MSE},
		y label style ={at={(-0.02,0.67)},anchor=south east},
		ytick={0.002,0.004,0.006,0.008},
		yticklabels={$2\cdot 10^{-3}$,$4\cdot 10^{-3}$,$6\cdot 10^{-3}$,$8\cdot 10^{-3}$},
		scaled ticks=false,
		tick label style={font=\small},
		label style={font=\small},
		legend style={font=\small},
		cycle list name=black white,
		legend style={at={(0.97,0.67)},anchor=south east},
		skip coords between index={0}{1},
		]
		\addplot []  file {csr_mse_OVERALL_coeffs_increment10.dat};
		\addlegendentry{CSR}
		
		\addplot [mark=star,thick,only marks]  file {csr_mse_OVERALLadapted_coeffs_increment10.dat};
		\addlegendentry{A-CSR}
	\end{axis}%
\end{tikzpicture}%

\caption{Mean squared error (MSE) of reconstructed curvelet coefficients, i.e., $\frac{1}{N}\sum_{n=1}^N\abs{c_n-c^\mathrm{rec}_n}^2$, where $c$ denotes the vector of curvelet coefficients of the original image and $c^\mathrm{rec}$ denotes the vector of reconstructed curvelet coefficients. This plot shows the results of the reconstruction of Shepp-Logan head phantom at an angular range $[0,\Theta]$ and noiselevel $2\%$.}
\label{plot:mse}
\end{figure}
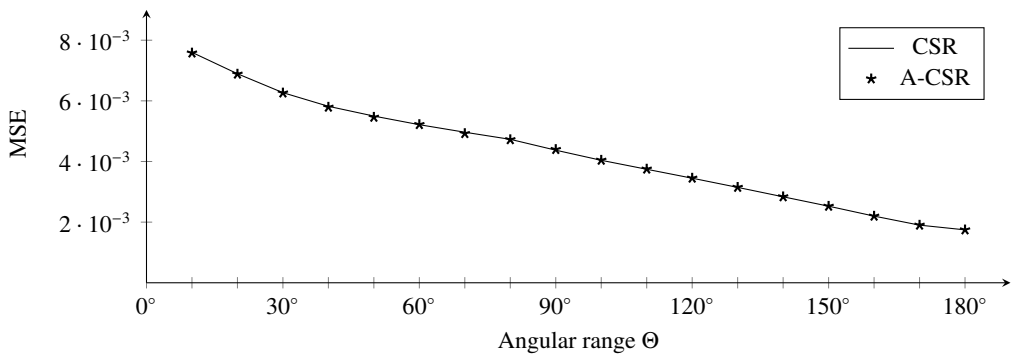

\begin{figure}[H]
\centering
\begin{tikzpicture}
	\begin{semilogyaxis}[
		height=5.2cm, 
		width=12.5cm,
		xmin = 0,
		xmax= 190,
		axis x line = bottom,
		axis y line = left,
		xlabel={Angular range $\Theta$},
		xtick={0,10,...,180},
		xticklabels={$0^\circ$,,,$30^\circ$,,,$60^\circ$,,,$90^\circ$,,,$120^\circ$,,,$150^\circ$,,,$180^\circ$},
		ylabel={relative MSE},
		scaled ticks=false,
		tick label style={font=\small},
		label style={font=\small},
		legend style={font=\small},
		cycle list name=black white,
		legend style={at={(0.97,0.67)},anchor=south east},
		skip coords between index={17}{19},
		skip coords between index={0}{1},
		]
		\addplot []  file {csr_relMSE_OVERALL_coeffs_increment10.dat};
	\end{semilogyaxis}%
\end{tikzpicture}%

\caption{Relative MSE of reconstructed curvelet coefficients, i.e., $\frac{1}{N}\sum_{n=1}^N\abs{c^\mathrm{CSR}_n-c^\mathrm{A-CSR}_n}^2$, where $c^\mathrm{CSR}$ and $c^\mathrm{A-CSR}$ denote the curvelet coefficient vector of the CSR reconstruction and A-CSR reconstruction, respectively. This plot shows the results of the reconstruction of Shepp-Logan head phantom at an angular range $[0,\Theta]$ and noiselevel $2\%$.}
\label{plot:relative mse}
\end{figure}
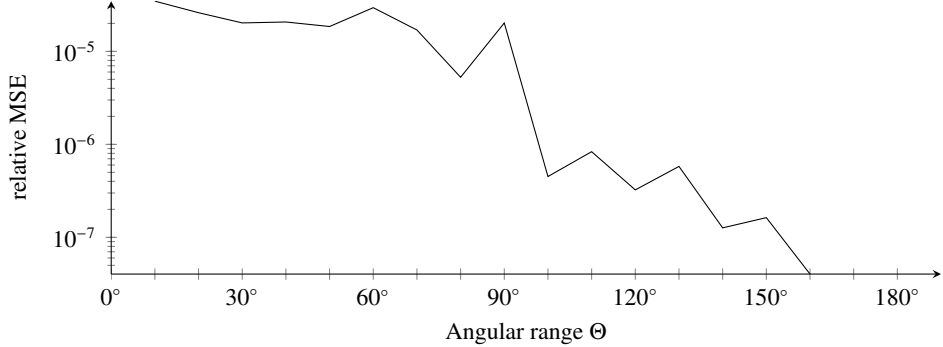

Eventually, we compare the reconstruction quality of CSR and A-CSR reconstruction to the quality of filtered backprojection (FBP) reconstructions. From Figures \ref{fig:rec phantom} - \ref{fig:rec radial pattern} we can observe that the FBP reconstructions contain more noise than reconstructions obtained through curvelet sparse regularization. Visually, the FBP reconstructions seem to be inferior to the CSR and A-CSR reconstructions. On the other hand, the visual impression of the CSR and A-CSR reconstruction appears to be quite good. Though CSR and A-CSR reconstructions are slightly smoother than the FBP reconstructions, all details are well preserved and the edges are still clearly visible. 

To verify the visual impressions, we computed the peak signal-to-noise-ratio (PSNR) of the normalized reconstructions\footnote{The gray values of the reconstructed images were normalized to the interval $[0,1]$} by
\begin{equation*}
	\mathrm{PSNR}(c^\mathrm{rec}) = 10\log\paren{\frac{1}{\mathrm{MSE}(c^\mathrm{rec})}}.
\end{equation*}
These values are shown in the Table \ref{tab:psnr values}. For each test image and each angular range, we can observe that the PSNR values of curvelet sparse regularizations (CSR and A-CSR) are considerably larger than those of the FBP reconstructions. Since larger PSNR values correspond to a better image quality, these results again confirm the visual impression. 

\begin{table}[H]
\centering
\subfloat[Shepp-Logan head phantom]{
	\begin{tabular}{lccc}
	\toprule
	& CSR & A-CSR & FBP \\
	\midrule
	$\Theta=35^\circ$   & 13.4 & 13.4 & 7.5   \\[1ex]
	$\Theta=160^\circ$ & 19.7 & 19.7 & 13   \\
	\bottomrule
	\end{tabular}
}\hskip4ex
\subfloat[Brainstem]{
	\begin{tabular}{lccc}
	\toprule
	& CSR & A-CSR & FBP \\
	\midrule
	$\Theta=35^\circ$   & 13.8 & 13.7 &  9.2   \\[1ex]
	$\Theta=160^\circ$ & 18.3 & 18.4 &  12.2  \\
	\bottomrule
	\end{tabular}
}\\[2ex]
\subfloat[Radial pattern]{
	\begin{tabular}{lccc}
	\toprule
	& CSR & A-CSR & FBP \\
	\midrule
	$\Theta=35^\circ$   & 14.5 & 14.5 & 11   \\[1ex]
	$\Theta=160^\circ$ & 16.2  & 16.2 & 9.3   \\
	\bottomrule
	\end{tabular}
}
\caption{PSNR values of normalized reconstructions.}
\label{tab:psnr values}
\end{table}

\subsection{Comments}
Our intention to perform these experiments was to give a practical proof of concept for our results. The implementation of the reconstruction algorithms is therefore very rudimental and, hence, there is much room for improvements or optimizations. For example, the execution times that are presented in Figure \ref{fig:execution times} may be improved by a more elaborate implementation of the Algorithm \ref{alg1}. Though, there are many other algorithms available in the literature, the reason to use the iterative soft-thresholding algorithm for our experiments was its simplicity.

\section{Summary \& Concluding remarks}
\label{sec:conclusion}
In this work we have introduced curvelet sparse regularization as a stable reconstruction method for the limited angle tomography. The stabilizing nature of the this method was demonstrated in numerical experiments. In comparison to the FBP reconstructions, curvelet sparse regularization reconstructions offered a superior reconstruction quality. Another issue, that was addressed by CSR is its ability to produce edge-preserving reconstructions. The numerical experiments confirmed that to some extent. Wa have seen that all details in the in the CSR reconstructions were well preserved and the edges were clearly visible. However, CSR reconstructions were found to be smoother (more blurry) than FBP reconstructions. We believe that an even better edge-preservation can be achieved by tuning the reconstruction procedure. 

The main part of this work was devoted to the characterization of curvelet sparse regularizations in limited angle tomography. In Section \ref{sec:characterizations}, we have given a characterization of limited angle CSR reconstructions in terms of visible and invisible curvelet coefficients. Based on this characterization, an adapted CSR method was formulated. The adaptivity of this approach results from the fact that, depending on the available angular range, the curvelet dictionary can be partitioned into a sub-dictionary of visible curvelets and a sub-dictionary of invisible curvelets. So, by formulating the reconstruction problem only with respect to the visible curvelet sub-dictionary, the problem becomes adapted to the limited angle geometry. Moreover, this entails a significant dimensionality reduction of the original reconstruction problem. This dimensionality reduction can be easily implemented in practice. A proof of this concept was given by numerical experiments. As a result, we found that the achieved dimensionality reduction is considerable, especially, when the available angular range is small. Consequently, a significant speedup of the reconstruction algorithms was observed. The reconstruction quality of the adapted approach, however, was found to be equal to that of the non-adapted method.  

Furthermore, we would like to note that the results of this work can be generalized to the three-dimensional setting. The ideas of this work carry over to this situation, even though, the analysis is more technical in this case. 

We conclude this article by emphasizing the role of curvelets in case of limited angle tomography and summarize the reasons why they were used in this work: On the one hand, curvelets provide a sparse representation of functions with an optimal encoding of edges. These properties qualify curvelets for the use in sparse regularization and give rise to an edge-preserving reconstruction. On the other hand, curvelets are highly directional. Therefore, they enable a separation of visible and invisible structures of a function which is imaged at a limited angular range. Because of this directionality, curvelets allow to adapt the problem the limited angle setting.

\section*{Acknowledgements}
The author gratefully acknowledges the support from GE Healthcare, Image Diagnost International, Munich. He especially thanks Peter Heinlein (GE Healthcare, Image Diagnost International, Munich) for his support during this work. 
The author also acknowledges the support of the TUM Graduate School’s Thematic Graduate Center ISAM at Technische Universit\"at Mü\"unchen, Germany.

\bibliographystyle{elsarticle-harv}
\bibliography{references}

\end{document}